\title{The distance from a point to its opposite along the surface
of a box}
\author{S.~ Michael Miller\thanks{
University of California, Los Angeles}
        \and Edward F.~Schaefer
        \thanks{
        Santa Clara University
                  }\thanks{This work was
                  supported by Pennello Funds from our department.
The authors are grateful to Richard A. Scott for
helpful advice on defining the equivalence relation and John Girgis for teaching us how to create
the figures. The authors are also grateful to the referee for helpful
suggestions on the text as well as Figure~\ref{Flat}.}
        }

\documentclass{article}
\pdfoutput=1
\usepackage{graphicx}
\usepackage{amscd}
\usepackage{amssymb}
\DeclareGraphicsExtensions{.pdf,.png,.jpg}
\newtheorem{theorem}{Theorem}
\newtheorem{lemma}{Lemma}
\newtheorem{prop}{Proposition}
\newtheorem{cor}{Corollary}

\newenvironment{proof}{{\sc Proof:}}{~\hfill QED}
\newenvironment{AMS}{}{}
\newenvironment{keywords}{}{}
\newcommand{\ra}{{\rightarrow}}

\newcommand{\calF}{{\mathcal F}}
\newcommand{\calR}{{\mathcal R}}


\begin{document}
\newpage
\maketitle
\begin{abstract}
 Given a point (the ``spider'') on a rectangular box, we would like
to find  the minimal distance along the surface to
its opposite point (the ``fly'' - the reflection of the spider
across the center of the box). Without loss of generality, we can
assume that the box has dimensions $1\times a\times b$ with the
spider on one of the $1\times a$ faces (with $a\leq 1$). The
shortest path between the points is always a line segment for some
planar flattening of the box by cutting along edges. We then
partition the $1\times a$ face into regions, depending on which
faces this path traverses. This choice of faces determines an
algebraic distance formula in terms of $a$, $b$, and suitable
coordinates imposed on the face. We then partition the set of
pairs $(a,b)$ by homeomorphism of the borders of the $1\times a$
face's regions and a labeling of these regions.\end{abstract}

\begin{keywords}
  Spider and fly problem
\end{keywords}

\begin{AMS}
00A08, 53C22
\end{AMS}


\section{Introduction}

In 1903, Henry Dudeney, a popular creator of mathematical puzzles,
posed the famous spider and fly problem in \cite{Du}: given a
spider and fly in a $30\times 12 \times 12$ foot room, the spider
on one $12\times 12$ wall, one foot below the ceiling and
equidistant from the sides, and the fly on the other $12\times 12$
wall, one foot above the floor and equidistant from the sides -
what is the shortest path the spider can take to reach the fly by
crawling along the walls, floor, and ceiling of the room? The most
obvious path, going straight up, then straight across the ceiling,
then straight down to the fly, is 42 feet long. The spider's
optimal path of $40$ feet requires it traverse five faces of the
room before reaching the spider. We can cut along certain edges of
the room, flatten out the room and then this path is a straight
line segment.

The problem can be generalized to an arbitrary point on an
arbitrarily sized rectangular box. By scaling and rotating, we can restrict
the dimensions of the box to $1\times a \times b$, with $0<a\leq
1$, $0<b$, and the spider to be a point on the $1\times a$ face.
 We wish to find the shortest distance along the surface of
the box (the path is called a geodesic) between this point and its
opposite - the point obtained by reflecting the original point
across the center of the box (this is the antipodal map).

We can assign $(x,y)$-coordinates to the points of the $1\times a$
side so it is described by $-\frac{a}{2}\leq x\leq \frac{a}{2}$
and $-\frac{1}{2}\leq y\leq \frac{1}{2}$. By symmetry, it suffices
to solve the problem for points in the fundamental region $\calF$
given by $0\leq x\leq \frac{a}{2}$ and $0\leq y\leq \frac{1}{2}$.
Dudeney's spider has $(a,b,x,y)=(1,\frac{5}{2},0,\frac{5}{12})$.
In \cite[p.\ 144 - 146]{Ra}, Ransom made progress on this question
for $x=0$ and $y=0$.

In this article, we attempt to solve the generalized problem.
One might think this has an easy, elegant solution; let us surprise you with how baroque the details actually become. There is a question of what form the solution should take.
The solution we found most
compelling is the following: for
each point $(a,b)$ in the $ab$-plank ($0 < a \leq 1, 0 < b$) and each point in the
associated $\calF$, we will describe six paths, at least one of
which must be the shortest.
We note that sometimes, for each of two nearby points $(a,b)$, the subsets
of the fundamental region,  on which
each of the six distance functions is smallest, ``look essentially the same''.
This is a topological notion; so we use topology to describe
an equivalence relation.
Let ${\mathcal F}_Q$ and ${\mathcal
F}_R$ be the fundamental regions associated to the points
$Q$ and $R$
 in the $ab$-plank. We consider
$Q$ and $R$ to be in the same equivalence class if
we can continuously deform the boundary curves (between regions on
which a given distance function is smallest) and edges of
${\mathcal F}_Q$ to the boundary curves and edges  of ${\mathcal
F}_R$ such that the shortest of the six paths associated to the points
bounded by these curves remain the same (this will be defined
precisely in Section~\ref{equrel} using homeomorphisms). We will
describe all 47 of the equivalence classes and the associated
$\calF$'s.

The computational proofs of the validity of the equivalence classes are
quite long and are presented in Appendix.

\section{The paths}
\label{The paths}

We consider the $1\times a$ face to be the top face of the
box, the face opposite it to be the bottom face of the box and the four
other faces to be side faces. As we look down on the box,
we orient the $1\times a$ face as we standardly orient the $xy$-plane.
If the path (starting in $\calF$) leaves
 the $1\times a$ face and then enters the side face to the right, up
above, left or down below then we denote the path
$p_{R,j}$, $p_{U,j}$, $p_{L,j}$ or $p_{D,j}$, respectively
(with j to be defined next).
If the path continues immediately to the bottom face,
then we let j = 0. This path crosses three faces.
If, after entering the first side face,
the path continues
in a clockwise direction (viewed from above)
and crosses a total of $n+1$ side faces beforing
entering the bottom face, then we let $j=n$. This path crosses $n+3$ faces.
If, after entering the first side face,
the path continues
in a counterclockwise direction and crosses a total of $n+1$ side faces beforing
entering the bottom face, then we let $j=-n$.

In Figure~\ref{Flat}, the six rectangles with all solid edges give one flattening
of the box. Other rectangles are faces for other flattenings on which
certain paths are line segments. The figure indicates paths $p_{R,0}$,
$p_{R,1}$, $p_{R,2}$ and $p_{U,-1}$. The fundamental region is shown
in the center of the figure;  its reflections by the antipodal map are indicated
near the boundary.

\begin{figure}
\centering
\includegraphics[width=7cm]{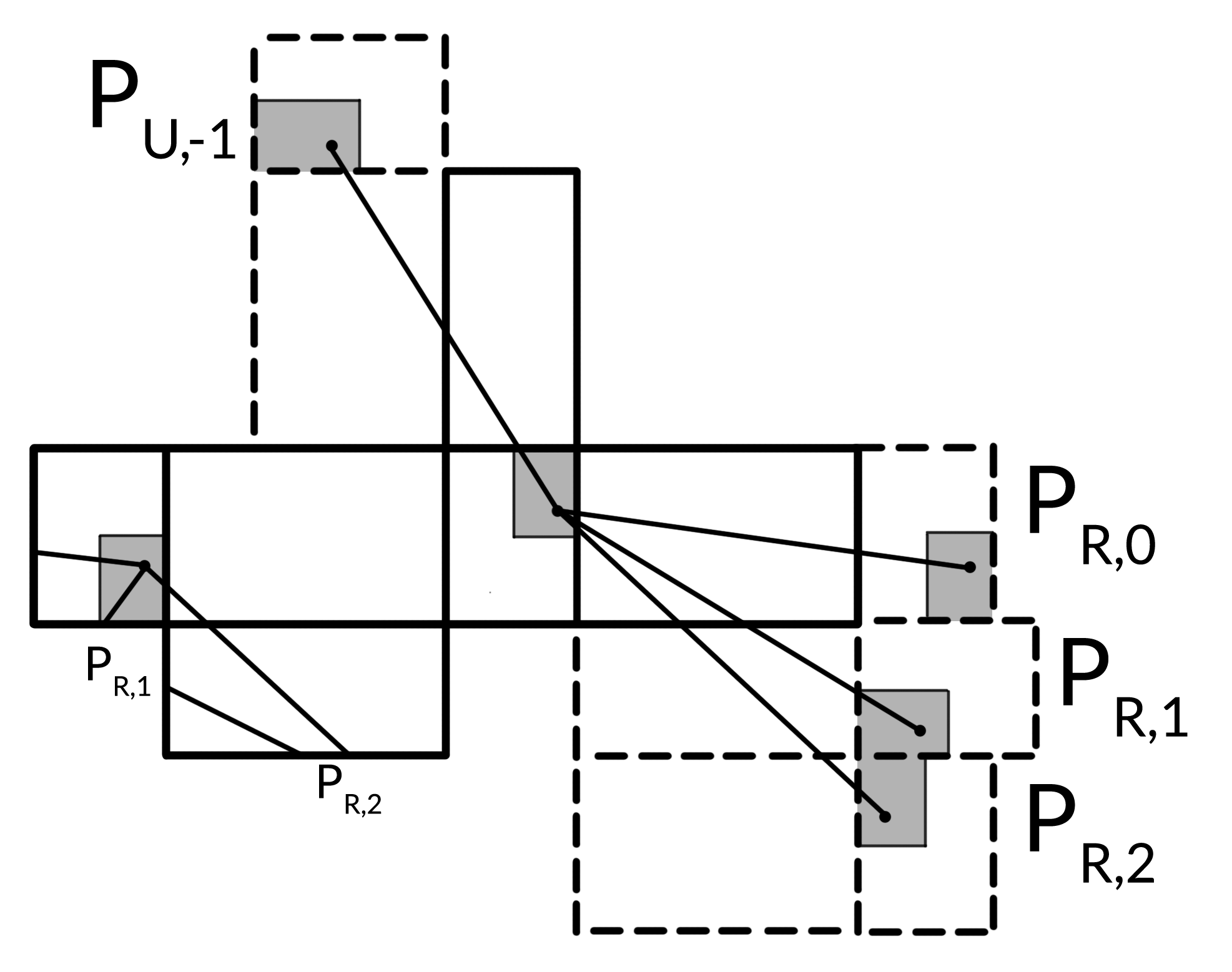}
\caption{Four paths on flattenings of the box}
\label{Flat}
\end{figure}

\begin{prop}
Paths with $|j|\geq 3$
can not be shortest.
\end{prop}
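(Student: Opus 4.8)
The plan is to prove the sharper statement that a shortest path crosses at most three side faces, i.e.\ has $|j|\le 2$. Suppose to the contrary that a shortest path $\gamma$ starts by crossing into a side face $S_1$ and then crosses at least three more side faces. Flatten the box by the unrolling that makes $\gamma$ a segment: unroll the band of four side faces onto the universal cover $\R\times[0,b]$ of the equatorial cylinder (which has circumference $L=2+2a$), attach the top $1\times a$ face along the $S_1$-edge so that the spider is a fixed point $S$ with second coordinate in $[-1,0]$, and note that the fly has one lift in this strip for each of the four edges of the bottom face along which the bottom may be attached and each translate (by a multiple of $L$) of the relevant side panel; every fly-lift $\phi$ has second coordinate in $[b,b+1]$. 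A path of the corresponding combinatorial type is the segment $S\phi$, when that segment stays in the union of the flattened faces, and its index $j$ is read off from which lift $\phi$ is.

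First I record the elementary point. Two fly-lifts $\phi,\phi'$ belonging to the same bottom-edge differ only by a multiple of $L$ in their first coordinate and share their second coordinate, so $|S\phi|^2-|S\phi'|^2=\Delta u(\phi)^2-\Delta u(\phi')^2$; hence the lift with $|\Delta u|$ smallest (the ``primitive'' one) gives the shortest segment among these, strictly so unless two of them share the same $|\Delta u|$. A short computation with the orientations then shows: for each of the four bottom-edges the primitive lift(s) satisfy $|\Delta u|\le L/2=1+a$, and the primitive lifts over the four bottom-edges are exactly the five lifts with $|j|\le 2$ (for the edge giving $j=\pm2$ there are two primitive lifts). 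In particular every lift with $|j|\ge 3$ is non-primitive, so the segment to it is strictly longer than the segment to the primitive lift sharing its bottom-edge, and the latter has $|j|\le 2$ — unless the two values of $|\Delta u|$ coincide, which by tracing the computation happens only when the spider sits at a vertex of the box, and then the $|j|\ge3$ path merely ties with a $|j|\le2$ path rather than being uniquely shortest. Either way $\gamma$ cannot have $|j|\ge3$, a contradiction. A crude variant shows directly that $|j|\ge4$ is already impossible for a shortest path, since then $\gamma$ crosses $\ge5$ side faces and so has horizontal run $\ge w_2+w_3+w_4\ge 1+2a>L/2$, whereas a primitive lift has run $\le L/2$; so only $|j|=3$ really needs the comparison above.

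The one genuinely delicate ingredient — and the step I expect to cost the most work — is checking that this length comparison is legitimate, since I have compared the bare quantities $\sqrt{\Delta u^2+\Delta v^2}$: I must verify that whenever the segment to the non-primitive lift stays inside the flattened faces, so does the segment to the primitive lift with the same bottom-edge. The competing segment has the same $\Delta v$ but smaller $|\Delta u|$, hence is steeper, and its three realizability requirements — that it leave the top face into $S_1$, meet each intermediate box-edge at a height strictly between $0$ and $b$, and enter the bottom face inside the correct panel — follow from the corresponding requirements for the longer segment by a chain of inequalities among $a/2-x_0$, $1/2-y_0$, $q=a/2+1/2-x_0-y_0$ and $b+q$, the key points being $q\ge0$, $x_0\ge0$, $y_0\ge0$ and $a\le1$ (which together force $0\le a+1-q$ and $a/2-x_0\le a/2+x_0$). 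I would do this bookkeeping once for a single wrapping step; iterating it then reduces any $|j|\ge3$ lift to a primitive one and completes the proof.
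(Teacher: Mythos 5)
Your argument is correct in substance but packaged quite differently from the paper's. The paper treats only $j=3$ explicitly (dismissing $|j|\ge 4$ as obvious) and argues by surgery: it keeps the offending path on the top and bottom faces and replaces its middle portion by the shortest route across the \emph{two} side faces that $p_{\ast,-1}$ crosses, so the competitor is manifestly a genuine surface path and the whole comparison reduces to ``horizontal run $>1+a$ versus $<1+a$'' on a right triangle of height $b$. You instead unroll the side band onto the universal cover of the equatorial cylinder and observe that the ten flattenings are exactly the lifts of the fly over the four bottom edges, with $|j|\ge 3$ corresponding precisely to the non-primitive lifts; this treats all $|j|\ge 3$ uniformly, makes the role of the half-circumference $L/2=1+a$ transparent, and correctly identifies the only tie case (spider at a vertex), a degeneracy the paper's ``greater than $1+a$'' also glosses over. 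The price of comparing bare segment lengths is the realizability check you flag as the delicate step, and which you only sketch; note that you can sidestep that bookkeeping entirely by borrowing the paper's device --- replace only the side-band portion of the given $|j|\ge3$ path by the short-way-around route between the same two points on the top and bottom edges of the band, which is automatically a valid surface path --- or by the paper's later observation that a segment exiting a flattening does so through an edge glued to the one it re-enters, so its projection to the box is a surface path of the same length that can then be shortened. With either patch your outline closes into a complete proof of the (slightly stronger, uniform) statement.
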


\begin{proof}
First assume $j=3$.
We will prove this by contradiction.
Assume, for some $(a,b,x,y)$ and some $\ast\in \{ R, U, L, D\}$,
that $d_{\ast,3} < d_{\ast,-1}$.
Note that $p_{\ast,3}$ and $p_{\ast,-1}$ cross the same initial side
faces and same
final side faces.
Consider a new path that is identical
to $p_{\ast,3}$ on the top and bottom faces.
Along the two side faces that $p_{\ast,-1}$ crosses, complete
the new path by connecting
the points where $p_{\ast,3}$ meets the edges of the top and bottom
faces with the shortest possible path (it will be a line segment when
those two side faces are flattened).
Note that the length of the new path is at least $d_{\ast,-1}$,
since it crosses the same faces as $p_{\ast,-1}$.

The path $p_{\ast,3}$ and the new path are identical except
on the sides faces. On flattened
side faces, $p_{\ast,3}$ and the new path
are each the  hypotenuse
of a right triangle with one side of length $b$ (the height of each side
face).
For $p_{\ast,3}$, the other side of the right triangle has length
greater than $1+a$,
which is the width of the second and third of the four side faces it
crosses.
On the new path, the other side of the right triangle has length
less  than $1+a$, which is the width of the two side faces it crosses.
So the new path has length less than $d_{\ast,3}$
and at least $d_{\ast,-1}$; this is a contradiction.
For the argument for $j=-3$, just flip the signs in the subscripts.

The cases where $|j|\geq 4$, where paths cross at least five side
faces, are obvious.
\end{proof}

Note that if we started at the point
$(x,y)=(\frac{a}{2},\frac{1}{2})$  and traveled along the edge not
bordering $\calF$, or had a path that eventually traveled along
this edge or any edge parallel to it, then this would be a degenerate form of more than
one of the paths given and thus would be considered in our
analysis.

By cutting along edges, the 3, 4 or 5 faces that a path crosses
can be
flattened out onto a plane, which is the easiest way to
determine the length of the corresponding paths.

Note that a line
segment path from one point to its opposite can leave and reenter
a flattening. We do not need to worry about this case because it
leaves and reenters on what is the same edge of the box. By gluing
those edges back together and possibly perturbing the path, we get a different and shorter path.

The referee pointed out that the map that reflects points across
the origin (the antipodal map) interchanges pairs of paths. The pairs
are $\{ p_{\ast,2},p_{\ast,-2}\}$ (for each $\ast$),
$\{ p_{R,1}, p_{U,-1}\}$, $\{ p_{U,1}, p_{L,-1}\}$,
$\{ p_{L,1}, p_{D,-1}\}$,  $\{ p_{D,1}, p_{R,-1}\}$,
$\{ p_{R,0}, p_{L,0}\}$, and $\{ p_{U,0}, p_{D,0}\}$.
This is illustrated in Figure~\ref{Flat} for $p_{R,1}$ and $p_{U,-1}$.
The length of each path in a pair is the same. So to minimize distance, it suffices to consider one
path in each pair. We choose $p_{\ast,2}$, $p_{R,1}$, $p_{U,1}$, $p_{L,1}$,
$p_{D,1}$, $p_{R,0}$, and $p_{U,0}$.
In
Table~\ref{10distances}, for a point $(x,y)\in \calF$, we give the
coordinates of the opposite point, given the obvious flattening for each of the 10 paths (this is illustrated in Figure~\ref{Flat} for four of the paths),
and $d_{\ast,j}$, the square of the distance between them. In the table we let
$a_1 = \frac{a+1}{2}$.

\begin{table}

 {\footnotesize
\begin{tabular}{l|l|l}
path & point opposite of $(x,y)$ & $d_{*,j}=$ squared distance to $(x,y)$ \\

\hline
$p_{R,0}$ & $(x+a+b,-y)$ & $4y^2 + a^2 + 2ba + b^2$ \\
$p_{R,1}$ & $(-y+a_1+b,-x-a_1)$ &  $2x^2 + 4xy + 2y^2 - 2bx  - 2by +  \frac{1}{2} a^2 + (b+1)a + b^2 + b +  \frac{1}{2}$ \\
$p_{R,2}$ & $(-x+a+b,y-a-1)$ & $4x^2 + (-4a - 4b)x + 2a^2 + (2b+2)a + b^2 + 1$ \\

 \hline
$p_{U,0}$ & $(-x,y+b+1)$ & $4x^2 + b^2 + 2b + 1$ \\
$p_{U,1}$ & $(y+a_1,x+a_1+b)$ &  $2x^2  -4xy + 2y^2 + 2bx  - 2by +  \frac{1}{2} a^2 + (b+1)a + b^2 +  b +  \frac{1}{2}$ \\
$p_{U,2}$ & $(x+a+1,-y+b+1)$ & $4y^2 + (-4b - 4)y + a^2 + 2a+ b^2 + 2b + 2$ \\

\hline
$p_{L,1}$ &  $(-y-a_1-b,-x+a_1)$ &  $2x^2 + 4xy + 2y^2 + 2bx + 2by +  \frac{1}{2} a^2 + (b + 1)a + b^2 + b +  \frac{1}{2}$ \\
$p_{L,2}$ &$(-x-a-b,y+a+1)$ & $4x^2 + (4a + 4b)x + 2a^2 + (2b + 2)a + b^2 + 1$ \\

\hline
$p_{D,1}$ & $(y-a_1,x-a_1-b)$ & $2x^2 -4xy + 2y^2 - 2bx  + 2by +  \frac{1}{2} a^2 + (b + 1)a + b^2 + b +  \frac{1}{2}$ \\
$p_{D,2}$ & $(x-a-1,-y-b-1)$ &  $4y^2 + (4b + 4)y + a^2 + 2a + b^2 + 2b + 2$ \\
\end{tabular} }
\caption{The 10 distance functions (note $a_1 = \frac{a+1}{2}$)}
\label{10distances}
\end{table}

By
inspection we see that $d_{R,2}\leq d_{L,2}$, $d_{U,2}\leq
d_{D,2}$ and $d_{D,1}\leq d_{L,1}$. Since we are minimizing
distance, we no longer consider $d_{L,2}$, $d_{D,2}$ or $d_{L,1}$.
So it suffices to consider the seven distance functions $d_{R,0}$,
$d_{R,1}$, $d_{R,2}$, $d_{U,0}$, $d_{U,1}$, $d_{U,2}$ and
$d_{D,1}$.

\begin{prop}
For all $P=(a,b,x,y)$ with $0< a \leq 1$, $0 < b $, $0 \leq x\leq
\frac{a}{2}$, $0\leq y\leq \frac{1}{2}$ we have $d_{D,1}(P) \geq
{\rm min} \{ d_{R,0}(P), d_{R,1}(P), d_{U,1}(P)\}$.
\end{prop}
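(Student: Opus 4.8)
The plan is to reduce to a single hard case by two one-line computations and then settle that case with a concavity trick.

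First I would compute the differences $d_{D,1}-d_{U,1}$ and $d_{D,1}-d_{R,1}$ directly from Table~\ref{10distances}; they collapse to $d_{D,1}-d_{U,1}=4b(y-x)$ and $d_{D,1}-d_{R,1}=4y(b-2x)$. Consequently, if $y\ge x$ then $d_{D,1}\ge d_{U,1}$, and if either $y=0$ or $b\ge 2x$ then $d_{D,1}\ge d_{R,1}$; in each of these cases $d_{D,1}$ already exceeds one of the three functions on the right, so the asserted inequality holds. The only remaining points are those with $0<y<x$ and $0<b<2x$, and for these I would instead prove the stronger statement $d_{D,1}\ge d_{R,0}$. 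Note that in this regime $b<2x\le a\le 1$, a fact used repeatedly below.

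For the remaining case set $g:=d_{D,1}-d_{R,0}$. Expanding and simplifying gives $g=2x^2-4xy-2y^2-2b(x-y)+1-\frac{1}{2}(a-1)^2+b(1-a)$. The key observation is that, viewed as a function of $y$ with $a,b,x$ held fixed, $g$ is a downward parabola (the coefficient of $y^2$ is $-2$), hence concave; therefore on the interval $0\le y\le x$ we have $g\ge \min(g|_{y=0},\,g|_{y=x})$, so it suffices to show both endpoint values are nonnegative. At $y=x$ one gets $g|_{y=x}=-4x^2+1-\frac{1}{2}(a-1)^2+b(1-a)$; since $b(1-a)\ge 0$ and $x\le a/2$ gives $-4x^2\ge -a^2$, this is at least $-\frac{3}{2}a^2+a+\frac{1}{2}=-\frac{1}{2}(3a+1)(a-1)\ge 0$ for $0<a\le 1$. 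At $y=0$ one gets $g|_{y=0}=2x^2+b(1-a-2x)+1-\frac{1}{2}(a-1)^2$, which is linear in $b$: if $1-a-2x\ge 0$ then $g|_{y=0}\ge 2x^2+1-\frac{1}{2}(a-1)^2\ge \frac{1}{2}$, while if $1-a-2x<0$ then using $b<2x$ gives $g|_{y=0}>-2x^2+2(1-a)x+1-\frac{1}{2}(a-1)^2$, which is again a downward parabola in $x$ on $[0,a/2]$ and hence bounded below by its value at $x=0$ (namely $1-\frac{1}{2}(a-1)^2\ge \frac{1}{2}$) or at $x=a/2$ (namely $-2a^2+2a+\frac{1}{2}$, which is nonnegative for $0<a\le 1$). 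Either way $g\ge 0$, completing the argument.

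The genuine obstacle is this last case ($0<y<x$ and $b<2x$): there no single function among $d_{R,0},d_{R,1},d_{U,1}$ dominates $d_{D,1}$ for an obvious reason, and crude term-by-term estimates on $d_{D,1}-d_{R,0}$ fail because the minimum of this difference lies on the boundary slice $y=x$ (or $b=2x$) of the parameter box rather than at a corner. The concavity-in-$y$ reduction is precisely the device that turns the problem into two one-variable quadratic estimates; after that everything is routine quadratic bookkeeping, the only mild wrinkle being the secondary case split on the sign of $1-a-2x$ at the endpoint $y=0$.
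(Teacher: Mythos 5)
Your proof is correct. The opening reductions are identical to the paper's: both compute $d_{D,1}-d_{U,1}=4b(y-x)$ and $d_{D,1}-d_{R,1}=4y(b-2x)$ and thereby dispose of all points except those with $0<y<x$ and $b<2x$ (the paper phrases its residual region slightly more generously, as $b<a$ and $y<x$), and both then aim for the stronger inequality $d_{D,1}\geq d_{R,0}$ on what remains. Where you diverge is in how that last inequality is established. The paper argues topologically: it evaluates $d_{D,1}-d_{R,0}$ at one sample point, then shows the hyperbola $d_{D,1}=d_{R,0}$ cannot meet the boundary of the residual region in two distinct points (checking each of the three boundary pieces separately), so by continuity the sign cannot flip in the interior. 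You instead exploit that $g=d_{D,1}-d_{R,0}$ is concave in $y$ (coefficient $-2$ on $y^2$), reduce to the endpoints $y=0$ and $y=x$, and verify nonnegativity there by elementary quadratic estimates; I have checked the algebra, including the factorization $-\frac{3}{2}a^2+a+\frac12=-\frac12(3a+1)(a-1)$ and the endpoint values $1-\frac12(a-1)^2\geq\frac12$ and $-2a^2+2a+\frac12\geq 0$ on $(0,1]$, and it all holds. Your route is more self-contained and purely algebraic, avoiding the sample-point-plus-continuity step; the paper's route avoids the explicit case analysis on the sign of $1-a-2x$ and fits the boundary-crossing machinery it reuses elsewhere. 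Either argument is complete.
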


\begin{proof}
We have $d_{D,1} - d_{U,1}=4b(y-x)$. So for $y\geq x$ we see
$d_{U,1}\leq d_{D,1}$. We have $d_{D,1} - d_{R,1} = 4y(b-2x)$. So
for $b\geq a$ we see $b\geq a\geq 2x$ and $d_{R,1}\leq d_{D,1}$.
Let $\calR$ be the subset $ b < a$ of the $ab$-plank and
$\calF'$ be the region $ y < x$ in the
fundamental region associated to a given $(a,b)\in \calR$. For
$d_{D,1}$ to be smallest (i.e.\ uniquely minimal) among the seven
distance functions, we need $(a,b)\in \calR$ and $(x,y)\in
\calF'$.

We now show that for all $(a,b,x,y)$ with $(a,b)\in \calR$ and
$(x,y)\in \calF'$ we have $d_{D,1}(a,b,x,y) \geq
d_{R,0}(a,b,x,y)$.  Note that for $(a,b,x,y)=(.8,.7,.3,.2)$ we have $d_{D,1}
> d_{R,0}$.
Since $d_{D,1}-d_{R,0}$ is a polynomial function, we can use a
continuity argument to show that if there is an $(a,b)\in \calR$
for which there is an $(x,y)\in \calF'$ such that
$d_{D_1}(a,b,x,y) < d_{R,0}(a,b,x,y)$ then the hyperbola
$d_{D,1}=d_{R,0}$ (for the given $(a,b)$) must pass through the
interior of $\calF'$, and so intersect the boundary of $\calF'$ in
two different points. We will show that this does not occur.

Let $\bar{\calF'}$ denote the closure of $\calF'$ in $\calF$.
The boundary of $\calF'$ (and of $\bar{\calF'}$) consists of $y=0$ and $y=x$ with $0\leq
x\leq \frac{a}{2}$, and $x=\frac{a}{2}$ with $0\leq y\leq
\frac{a}{2}$. The hyperbola $d_{D,1}=d_{R,0}$ meets $y=0$ where
$2x=b\pm \sqrt{(a+b-1)^2-2}$. Since $0 < b < a \leq 1$ we have $-1
< a+b-1 < 1$. So $(a+b-1)^2 - 2 < -1$. Thus the hyperbola does not
meet $y=0$ for $(a,b)\in \calR$. The hyperbola $d_{D,1}=d_{R,0}$
meets $y=x$ where $x^2 = \frac{1}{8}(-a^2-2ab+2a+2b+1)$. Since
$a\leq 1$ we have $3a^2+2ab \leq 3a+2b\leq 2a+2b+1$. So $2a^2 \leq
-a^2-2ab+2a+2b+1$ and $\frac{a^2}{4} \leq x^2$. Since $(x,y)\in
\bar{\calF'}$ we have $x^2\leq \frac{a^2}{4}$. So the hyperbola
$d_{D,1}=d_{R,0}$ can only meet $y=x$ in $\bar{\calF'}$ at
$(x,y)=(\frac{a}{2},\frac{a}{2})$. The hyperbola $d_{D,1}=d_{R,0}$
meets $x=\frac{a}{2}$ where $2y= b-a\pm
\sqrt{a^2-6ab+b^2+2a+2b+1}$. Since $b < a$ in  $\calR$ and $y \geq
0$, the hyperbola $d_{D,1}=d_{R,0}$ meets $x=\frac{a}{2}$ where
$2y= b-a+\sqrt{a^2-6ab+b^2+2a+2b+1}$. From above $3a^2+2ab\leq
2a+2b+1$; so $(2a-b)^2 \leq a^2-6ab+b^2+2a+2b+1$. We have $2a-b
\leq \sqrt{a^2-6ab+b^2+2a+2b+1}$. Thus $a \leq
b-a+\sqrt{a^2-6ab+b^2+2a+2b+1}$. So $a \leq 2y$. In $\bar{\calF'}$ we
have $y \leq x \leq \frac{a}{2}$. So the hyperbola $d_{D,1}=d_{R,0}$
can only meet $x=\frac{a}{2}$ in $\bar{\calF'}$ at
$(x,y)=(\frac{a}{2},\frac{a}{2})$. So the hyperbola can not meet
the boundary of $\calF'$ in two different points.
\end{proof}

So we see that the remaining six distance functions $d_{R,0}$,
$d_{R,1}$, $d_{R,2}$, $d_{U,0}$, $d_{U,1}$ and $d_{U,2}$ are
sufficient.
We will see they are also necessary, in the sense that
there are points $(a,b)$ in the $ab$-plank and points $(x,y)$
(in the associated fundamental regions) for which each of the six
distance functions is strictly smaller than the other five.

For the remainder of this article, for a given $(a,b)$, we say
that one of those six distance functions is {\it smallest}
(respectively {\it minimal}), for a given $(x,y)$, if it is
strictly smaller than (respectively smaller than or equal to) the
other five distance functions. Note that the regions on which a
distance function is smallest (respectively minimal) are open
(respectively closed) subsets of $\calF$.

\section{The equivalence relation}
\label{equrel}

For each $(a,b)$, there is an associated fundamental region
$\calF$. For each of the six distance functions, we can find the
subset of $\calF$ on which the distance function is smallest. Note
that where two of these subsets border, two or more distance
functions will have identical values.
As suggested in the
introduction, we use topology to define an equivalence relation
on points $(a,b)$,
in the $ab$-plank, for which the corresponding partitionings of their fundamental regions ``look essentially the same''. We do not technically have a partition since
the subsets on which each distance functions are minimal can overlap
on boundary curves.

Let $d_\alpha$ denote an element of the set $\{ d_{R,0}, d_{R,1},
d_{R,2}, d_{U,0}, d_{U,1}, d_{U,2}\}$. For each fundamental region
$\calF$ we call a connected component of the subset on which
$d_\alpha$ is smallest a $d_\alpha$-region. For a given
fundamental region $\calF$, let $S$ be the union of the borders of
the $d_\alpha$-regions (including the four sides of $\calF$). We
use $\prod_0 (\calF\setminus S)$ to denote the set of connected
components of $\calF\setminus S$; each is a $d_\alpha$-region. Let
$f:\prod_0 (\calF\setminus S)\rightarrow \{ d_{R,0}, d_{R,1},
d_{R,2}, d_{U,0}, d_{U,1}, d_{U,2}\}$ be the function that sends a
$d_\alpha$-region to $d_\alpha$.
Let $Q$ and $R$ be points in the $ab$-plank; we use $Q$ and $R$ as subscripts
to indicate to which point a particular notation is associated.
We say that
$Q$ is equivalent to $R$ if and only if there is a
homeomorphism $\iota : \calF_Q\ra \calF_R$ with the following
properties: i) $\iota|_{S_Q}$ induces a homeomorphism of $S_Q$ and
$S_R$,  ii) $\iota$ sends $(0,0)$, $(0,\frac{1}{2})$,
$(\frac{a_Q}{2},0)$ and $(\frac{a_Q}{2},\frac{1}{2})$ to $(0,0)$,
$(0,\frac{1}{2})$, $(\frac{a_R}{2},0)$ and
$(\frac{a_R}{2},\frac{1}{2})$, respectively, and iii) we have
$f_R\circ \iota|_{\prod_0(\calF_Q\setminus S_Q)}=f_Q$ (where
$\iota|_{\prod_0(\calF_Q\setminus S_Q)}$ is the obvious induced
map). Our goal is to find the equivalence classes for this
relation. We call a pair $(\calF, f)$  a {\it labeled} $\calF$. For each
equivalence class, we will also describe the associated labeled
$\calF$'s.

\section{The equivalence classes}
\label{TheEC}

\vspace{3mm}\noindent Now we want to partition the $ab$-plank by
the equivalence classes defined in Section~\ref{equrel}.
We will prove that there are
47 of them, some having area, some having length and two are
single points. We use reverse lexicographical order on $(a,b)$ to
order the equivalence classes. In Table~\ref{47ec} we define the
47 equivalence classes, give the dimension of each and give a set
of equalities and inequalities that define the subset of the
$ab$-plank that is the given equivalence class. In
Table~\ref{47ec}, $a'$ is the root of $a^4-2a^3+7a^2-6a+1$ near
$0.780$, $a''$ is the root of $8a^5+12a^4+a^3-10a^2-6a-1$ near
$0.927$, $b'$ is the root of $4b^3+3b-6$ near
$0.929$, $b''$ is the root of
$6b^7-7b^6-12b^5+7b^4+8b^3-b^2-2b-1$ near 1.72, and $b'''$ is the root of
$3b^4-10b^3+11b^2-6b +1$ near $1.92$.

All of the curves listed in Table~\ref{47ec} are lines or conic
sections, and so are easy to graph, except the two quartics
$(b^2+1)a^2+2b^3a-2b^3-b^2 = 0$ and $(2b-2)a^3+(b^2-1)a^2-b^2 =
0$. In addition, for the proof of Proposition~\ref{U2smallest}, we
will need to graph $(b^2+1)a^2+2a-b^2-2b=0$. The software Magma
(see \cite{Ma}) shows that each is a singular curve of genus 1 and
finds a real birational map to the projective closures of the
non-singular curves $y^2=x^3+19x^2+120x+256$, $y^2=x^3+4x^2+256$
and $y^2=x^3+x^2+4$, respectively (see \cite{Ha}). Since each
cubic in $x$ has a single real root, each of those projective
curves has a single real component. The images of each of these
single real components in $ab$-space go off to infinity; so
none of the quartics have a compact component. So we can trust
graphing software to draw them without missing a small, compact
component.

\begin{table}

 {\scriptsize
\begin{tabular}{l|l|l}
Equiv & & \\
class & dim & definition \\ \hline
1 & 2 &  $a+b-1 \leq 0$, $a^2+2ab-2b\leq 0$ \\
2 & 2  & $a+b-1 \leq 0$, $a^2+2ab-2b > 0$ \\
3 & 2 &  $a+b-1
> 0$, $a^2+2ab-2b > 0$, $ab-2a+2b^2-3b+2>0$, $b < \frac{2}{3}$ \\
4 & 1 &  $ab-2a+2b^2-3b+2=0$,  $ b <
\frac{2}{3}$ \\
5 & 2 &  [$ab-2a+2b^2-3b+2< 0$, $b\leq \frac{2}{3}$, $a < 1$] $\cup$ \\

& &  [$ (b^2+1)a^2+2b^3a-2b^3-b^2 > 0$,
$a-b^2 \geq 0$, $\frac{2}{3} < b < 1$, $a < 1$] \\
6 & 1 &  $a=1$, $0 < b \leq 1$ \\
7 & 2 &  $a+b-1 > 0$, $a^2+2ab-2b\leq 0$, $2b+a-2 < 0$ \\
8 & 1 & $2b+a-2=0$, $0 < a \leq \frac{2}{3}$ \\
9 & 2 &  $2b+a-2 > 0$, $a^2+(2b+2)a-4b \leq 0$,
$a^2+(2b-2)a+3b^2-2b-1 \leq 0$ \\
10 & 2 & $a^2+(2b+2)a-4b > 0$, $(b^2+1)a^2+2b^3a-2b^3-b^2 < 0$,
$a^2+(2b-2)a+3b^2-2b-1 \leq 0$ \\
11 & 1 & $(b^2+1)a^2+2b^3a-2b^3-b^2 = 0$, $\frac{2}{3} < a \leq
a'$\\
12 & 2 & $a^2+(2b+2)a-4b
> 0$, $(b^2+1)a^2+2b^3a-2b^3-b^2 \leq 0$, \\ & & $a^2+(2b-2)a+3b^2-2b-1
>0$\\
 13 & 2 & $(b^2+1)a^2+2b^3a-2b^3-b^2 >
0$, $a^2+(2b+2)a-4b > 0$,  $(\sqrt{2}+1)a-b-1 < 0$, $a' < a < 2\sqrt{2}-2$\\
14 & 1 & $(\sqrt{2}+1)a-b-1 = 0$, $a' < a < 2\sqrt{2}-2$ \\
15 & 2 &  $a-b^2 < 0$, $(\sqrt{2}+1)a-b-1 > 0$, $b < 1$\\
16 & 2 & $a^2+(2b+2)a-4b \leq 0$, $(b^2+1)a^2+2b^3a-2b^3-b^2 \leq
0$, \\ & & $a^2+(2b-2)a+3b^2-2b-1 > 0$, $b < 1$ \\ 17 & 2 &
$(b^2+1)a^2+2b^3a-2b^3-b^2
> 0$, $b' < b < 1$, $a^2+(2b+2)a-4b \leq 0$ \\ 18  & 1 & $b=1$, $0 < a \leq
\frac{-1+\sqrt{7}}{2}$ \\
19 & 1 & $b=1$, $\frac{-1+\sqrt{7}}{2} < a < 2\sqrt{2}-2$ \\ 20 &
0 &  $(a,b)=(2\sqrt{2}-2,1)$ \\ 21 & 1 & $b=1$, $2\sqrt{2}-2 < a <
1$ \\ 22 & 2 &  $a-b+1 \leq 0$, $2ab-2a-1 < 0$ \\
23 & 2 & $a-b+1 > 0$, $2a^2+(-3b+1)a+2b^2-2b>0$, \\ & &
$2ab-2a-1<0$, $(b^2+1)a^2+2b^3a-2b^3-b^2 \leq 0$, $b>1$ \\ 24 & 2
& $2a^2+(-3b+1)a+2b^2-2b \leq 0$, $a-2b+2 < 0$,
$(b^2+1)a^2+2b^3a-2b^3-b^2 \leq 0$ \\ 25 & 2 & $b > 1$, $a-2b+2
\geq 0$, $(b^2+1)a^2+2b^3a-2b^3-b^2 \leq 0$ \\  26 & 2 & $b
> 1$, $(b^2+1)a^2+2b^3a-2b^3-b^2 > 0$, $a^2+(2b+2)a-4b < 0$,
$a-2b+2 \geq 0$ \\  27 & 1 & $a^2+(2b+2)a-4b = 0$,  $2\sqrt{2}-2 <
a < 1$ \\ 28 & 2 & $a < 1$, $1 < b$, $a^2+(2b+2)a-4b
> 0$ \\ 29 & 1 &  $a=1$, $1 < b < \frac{3}{2}$
\\ 30 & 2 & $a-2b+2< 0$,
$(b^2+1)a^2+2b^3a-2b^3-b^2 > 0$, $2a^2+(-3b+1)a+2b^2-2b \leq 0$ \\
31 & 2 & $2a^2+(-3b+1)a+2b^2-2b>0$, $(b^2+1)a^2+2b^3a-2b^3-b^2 >
0$, $2ab-2a-1<0$, $b > 1$ \\
32 & 0 & $(a,b)=(1,\frac{3}{2})$ \\
33 & 1 & $2ab-2a-1=0$, $a'' < a < 1$ \\
34
 & 2 &
$2ab-2a-1> 0$, $(b^2+1)a^2+2b^3a-2b^3-b^2
> 0$, $(2b-2)a^3+(b^2-1)a^2-b^2 < 0$ \\  35 & 1 &
$(2b-2)a^3+(b^2-1)a^2-b^2 = 0$, $\frac{3}{2} < b < b''$ \\ 36 & 2
& $(2b-2)a^3+(b^2-1)a^2-b^2 > 0$, $(b^2+1)a^2+2b^3a-2b^3-b^2 > 0$,
$a-b+1 > 0$, $a < 1$ \\  37 & 1 & $a=1$, $\frac{3}{2} < b < 2$ \\ 38 & 1 &
$2ab-2a-1=0$,
$\frac{1}{\sqrt{2}} < a\leq a''$ \\
39 & 2 & $2ab-2a-1 > 0$, $(b^2+1)a^2+2b^3a-2b^3-b^2  \leq 0$, \\
& & $(2b-2)a^3+(b^2-1)a^2-b^2 < 0$, $a-b+1 > 0$ \\
40 & 1 &  $2ab-2a-1=0$, $1+\frac{1}{\sqrt{2}}\leq b$ \\
41 & 2 &  $2ab-2a-1 > 0$, $a-b+1 \leq 0$,
$(2b-2)a^3+(b^2-1)a^2-b^2
< 0$ \\
42 & 1 &   $(2b-2)a^3+(b^2-1)a^2-b^2 = 0$,   $ b'' \leq b < b'''$ \\
43 & 2 & $(2b-2)a^3+(b^2-1)a^2-b^2 > 0$,
$(b^2+1)a^2+2b^3a-2b^3-b^2  \leq 0$, $a-b+1 > 0$ \\
44 & 1 & $(2b-2)a^3+(b^2-1)a^2-b^2 = 0$, $b''' \leq b$  \\
45 & 2 & $(2b-2)a^3+(b^2-1)a^2-b^2  > 0$,
$(b^2+1)a^2+2b^3a-2b^3-b^2
\leq 0$,  $a-b+1 \leq 0$\\
46 & 2 & $(b^2+1)a^2+2b^3a-2b^3-b^2 >  0$, $a-b+1 \leq 0$, $a < 1$  \\
47 & 1 & $a= 1$, $2 \leq b$ \\
\end{tabular}  }
\caption{The 47 equivalence classes}
\label{47ec}
\end{table}

In Figure~\ref{partition}, we show the partition of the
$ab$-plank into the 47 equivalence classes.
For 2-dimensional equivalence classes, we use arrows to denote which boundaries
are part of the equivalence class.
It is difficult to
include equivalence classes 10 - 14, 17, 19, 20 and 31 - 33 in our figure
as they are small.
For example,  four of the 2-dimensional classes, 10, 12, 13 and 17, have
areas that are approximately 0.00075, 0.000062, 0.000035 and
0.00018, respectively.

\begin{figure}
\centering
\includegraphics[width=7cm]{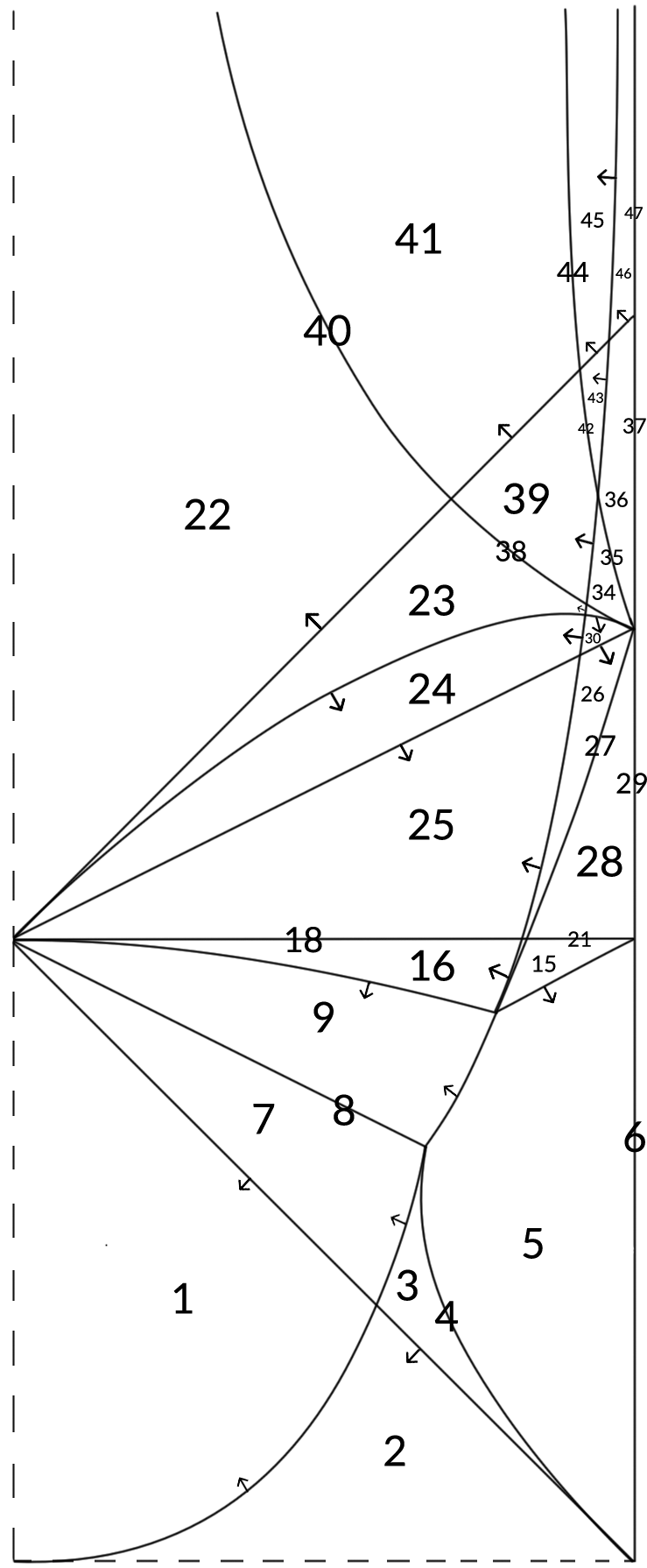}
\caption{The 47 equivalence classes in the $ab$-plank}
\label{partition}
\end{figure}

In Figures~\ref{TheFs1} and \ref{TheFs2}, we present a labeled $\calF$
for each of the 47 equivalence classes, that is
homeomorphic (with the three properties listed
at the end of Section~\ref{equrel}) to the labeled $\calF$ for
each $(a,b)$ in the equivalence class.
For simplicity, we give only the subscripts of the labeling and omit the $d$'s
and commas.

A priori, it seems surprising in equivalence class 13 that there are disjoint subsets of $\calF$ on
which $d_{U,0}$ is smallest.
We will see, in Section~\ref{DistPairs},  for 13 of the pairs $d_\alpha, d_\beta$, that
$d_\alpha=d_\beta$ is a hyperbola in the $xy$-plane. So, without loss of generality, there are two disjoint components of the $xy$-plane on which $d_\alpha < d_\beta$. Perhaps we should be surprised that there is a unique
equivalence class in which this occurs in $\calF$.

We also note, when passing from equivalence class 5 to 15,
a subset arises in the interior of $\calF$ in which $d_{R,1}$ is smallest.
Such subsets also arise in the interior of the $1\times a$ face, though on a border of $\calF$:
when crossing
$(b^2+1)a^2+2b^3a-2b^3-b^2 = 0$ for $a > a'$, from left to right,
and when crossing
$a^2+(2b-2)a+3b^2-2b-1 = 0$ for $a < a'$, from below to above, interior
subsets arise in which $d_{U,0}$ and $d_{R,1}$, respectively, are smallest.

\begin{figure}
\centering
\includegraphics[width=9cm]{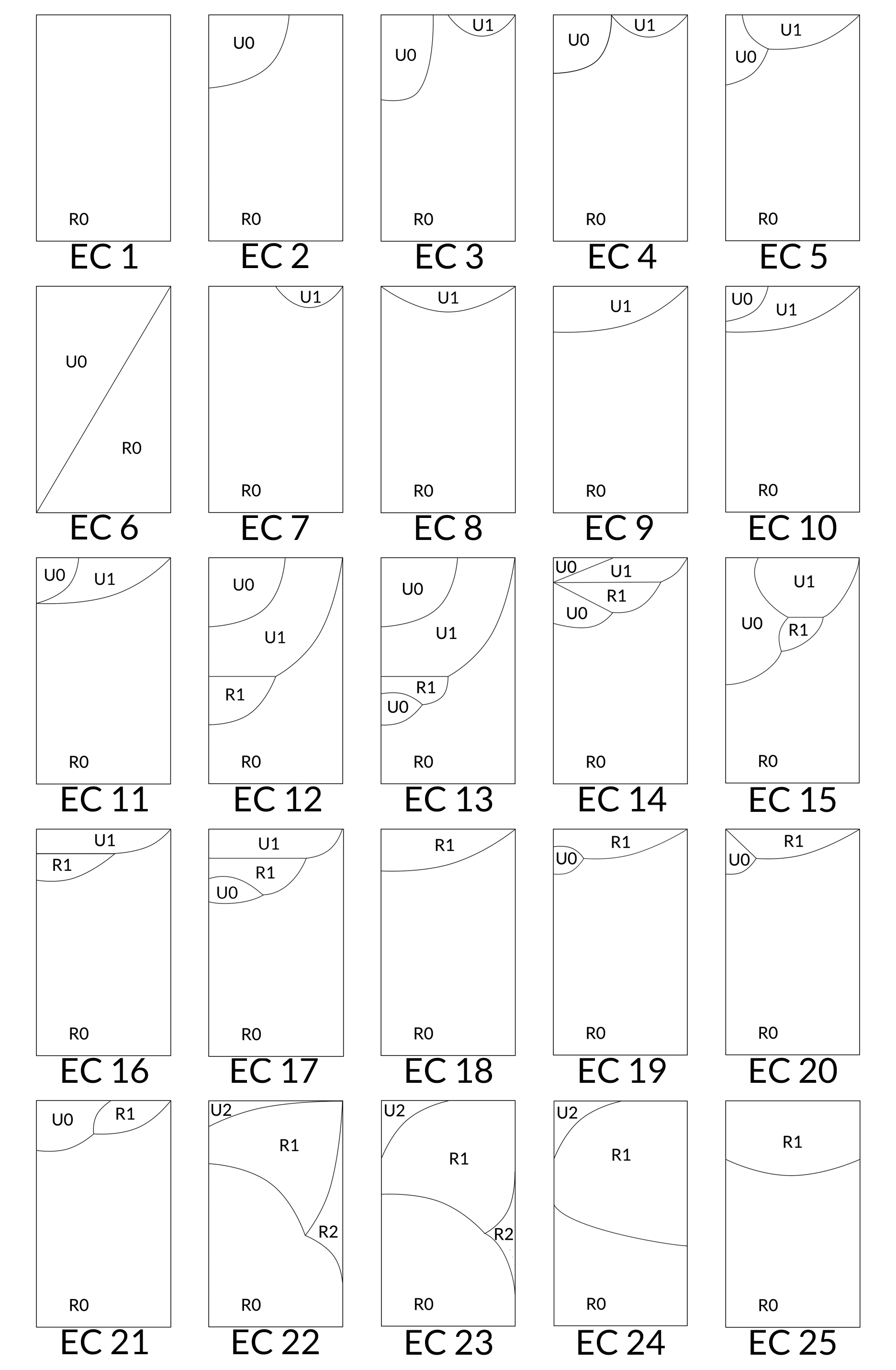}
\caption{Labeled $\calF$'s for Equivalence Classes 1 - 25}
\label{TheFs1}
\end{figure}

\begin{figure}
\centering
\includegraphics[width=9cm]{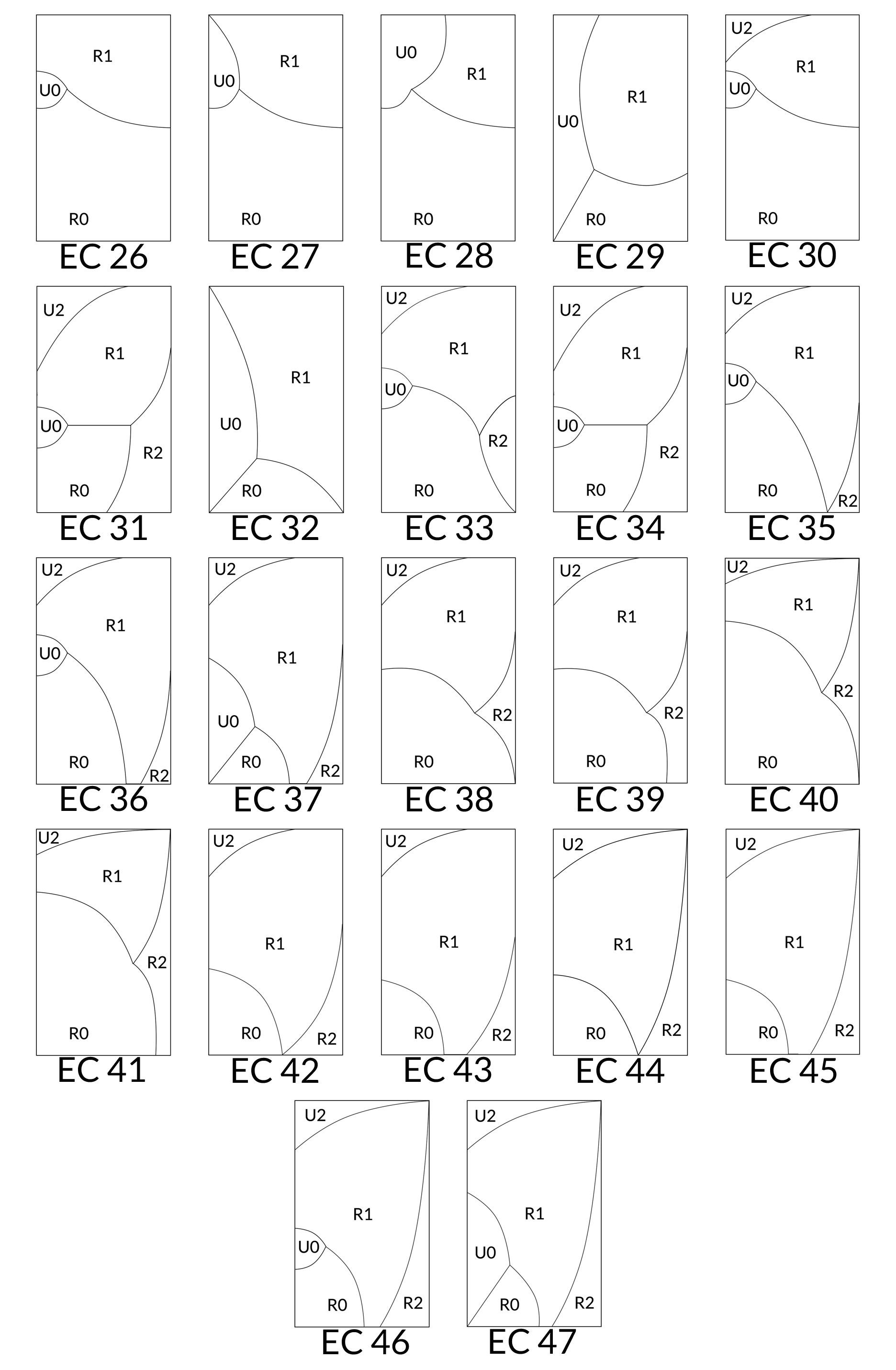}
\caption{Labeled $\calF$'s for Equivalence Classes 26 - 47}
\label{TheFs2}
\end{figure}

\section{Distance functions in pairs}
\label{DistPairs}

It is useful for understanding where the equivalence classes change in the $ab$-plank to equate the six distance functions, two at a time.

\begin{lemma}\label{DisDifIsoClaCha}
Fix two distinct distance functions $d_\alpha$, $d_\beta$, from the six of concern. Let
${\mathcal P}$ be a  path in the $ab$-plank. For each
point in the path there is a labeled $\calF$, considering only
$d_\alpha$ and $d_\beta$. If the equivalence classes for these
labeled $\calF$'s are not all the same for the points on
${\mathcal P}$ then one of the following occurs at some point on
${\mathcal P}$:
 1)
$d_\alpha=d_\beta$ at a corner of $\calF$, 2) $d_\alpha=d_\beta$
has a double intersection with a side of $\calF$, 3)
$d_\alpha=d_\beta$ consists of two line
segments meeting in the interior of $\calF$.
\end{lemma}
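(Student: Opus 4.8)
The plan is to analyze how the configuration of the curve $d_\alpha=d_\beta$ within $\calF$ can change as we move continuously along $\calP$. The key structural fact to establish first is that, for each of the thirteen relevant pairs, the locus $d_\alpha - d_\beta = 0$ in the full $xy$-plane is a conic — in most cases a hyperbola, in a few cases a single line (or pair of lines), as will be tabulated in Section~\ref{DistPairs}. Since $d_\alpha-d_\beta$ is a polynomial in $a,b,x,y$ of degree at most $2$ in $(x,y)$ with coefficients that vary continuously (indeed polynomially) in $(a,b)$, the set of $(x,y)\in\R^2$ with $d_\alpha-d_\beta=0$ varies continuously with the point of $\calP$ in the appropriate sense. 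The labeled $\calF$ for the pair $(d_\alpha,d_\beta)$ is determined by the combinatorial pattern in which this conic meets the closed rectangle $\bar\calF$ together with which side of the conic each resulting region lies on. I would make precise that the equivalence class of this two-function labeled $\calF$ is constant on any connected piece of $\calP$ along which this combinatorial pattern does not change.

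Next I would enumerate the ways the combinatorial pattern \emph{can} change. Because the conic is fixed-degree and moves continuously, a change in the pattern forces a degeneracy: either (i) the intersection $C\cap\partial\calF$ changes cardinality or changes which edges of $\partial\calF$ are met, or (ii) the topological type of $C\cap\bar\calF$ changes without a boundary crossing — which, for a conic meeting a convex rectangle, means two branches of $C$ that were separate inside $\bar\calF$ come together, i.e. $C$ itself degenerates to a pair of lines crossing inside $\calF$. Case (i) breaks into subcases by how the boundary intersection degenerates: a transverse crossing point of $C$ with an edge can only be created or destroyed by passing through the endpoint of that edge (a corner of $\calF$), giving outcome (1), or by two such crossing points on the \emph{same} edge colliding and annihilating, which is exactly a double (tangential) intersection of $C$ with a side, giving outcome (2). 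Case (ii) gives outcome (3). I would argue there are no other possibilities by a genericity/transversality argument: away from the three listed degeneracies, $C$ meets $\partial\calF$ transversally in a fixed number of interior-of-edge points missing the corners, and $C\cap\bar\calF$ is a disjoint union of arcs and/or a single smooth branch whose isotopy type rel $\partial\calF$ is locally constant, so the induced partition of $\bar\calF$ and its labeling are locally constant.

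The main obstacle I anticipate is ruling out more exotic transitions coming from the fact that the curves are only conics in $(x,y)$ \emph{for fixed} $(a,b)$, but can degenerate within the conic family — e.g. a hyperbola flattening to a line, an ellipse shrinking to a point, or the two asymptotes swinging around. I would handle this by checking, for each of the thirteen pairs explicitly, the discriminant of $d_\alpha-d_\beta$ as a quadratic form in $(x,y)$ and noting where it vanishes; the point is that such an \emph{intra-conic} degeneration only matters for the labeled $\calF$ if it occurs while the relevant branch sits inside $\bar\calF$, and in that case it manifests as one of outcomes (1)–(3) anyway (a line through a corner, a tangency with a side, or a crossing of two lines in the interior). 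A secondary subtlety is the "two disjoint components" phenomenon flagged for the hyperbola case in the introduction: when $C$ is a hyperbola, $\R^2\setminus C$ has three components and $\bar\calF$ may meet one, two, or all three, so I must be careful that the region count and labeling are captured by the $C\cap\partial\calF$ data plus the ambient trichotomy — but this is again governed by exactly the three degeneracies, since a change in how many of the three ambient regions meet $\bar\calF$ forces $C$ to sweep across a corner or become tangent to a side. Assembling these observations yields the three listed alternatives and no others.
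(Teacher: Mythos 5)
Your plan is correct and follows essentially the same route as the paper: establish that each locus $d_\alpha=d_\beta$ is a line or a hyperbola (possibly degenerate) varying continuously with $(a,b)$, and then argue that the two-function labeled $\calF$ can only change equivalence class when the conic passes through a corner, meets a side doubly, or degenerates to two lines crossing inside $\calF$. The only cosmetic difference is that the paper finishes by enumerating the 13 possible configurations explicitly (Figure~\ref{13ECs}) and perturbing each, where you substitute a general transversality argument for the same conclusion.
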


Note that a double intersection with a side of $\calF$ is either a
tangency, or the  two lines of a degenerate hyperbola
crossing a side, each transversally, at the same point.  In iii),
$d_\alpha=d_\beta$ is part of a degenerate hyperbola.

\begin{proof}
From Table~\ref{10distances},
we have $d_{R,0}=d_{U,2}$ and $d_{R,2}=d_{U,0}$ given by
$(2b+2)y+ab-a-b-1=0$ and $(2a+2b)x-a^2-ab-a+b=0$, respectively.
If we fix $(a,b)$ in the $ab$-plank, then the extension of each of these equations to
the entire $xy$-plane describes a line.
The other ${6\choose 2}-2=13$ equations, obtained by
setting two distinct distance functions to be equal,
are of the
form $\gamma x^2+\delta xy + \phi y^2 + \ell_1(a,b)x+\ell_2(a,b)y
+q(a,b)=0$, where $\gamma, \delta, \phi$ are constants (i.e.\ not depending
on $a$ or $b$), the $\ell_i$ are polynomials of degree at most 1
and $q$ is a polynomial of degree at most 2. In all 13 cases we have
$\delta^2 - 4\gamma\phi > 0$. So if we fix $a$ and $b$, the resulting equation,
when extended to the entire $xy$-plane, describes a hyperbola (possibly degenerate), which
we sometimes refer to for clarity.

We fix a pair $d_\alpha, d_\beta$.
Assume we do not have equivalence of all points of ${\mathcal P}$.
Since each $d_\alpha-d_\beta$ is a polynomial,
a continuous parametrization of ${\mathcal P}$ induces
a continuously parameterized family
 of lines or hyperbolas. Thus
there is a point $Q\in {\mathcal P}$ with the property that for every
$\epsilon > 0$, sufficiently small, and for all points $R\neq Q$ on
a fixed side of $Q$ in ${\mathcal P}$,
with ${\rm dist}(Q,R) < \epsilon$, we have  $Q$ and $R$
not equivalent.

Assume, for every $\epsilon > 0$, sufficiently small, and for all points $R\neq Q$ on a fixed side of $Q$ in ${\mathcal P}$, with ${\rm dist}(Q,R) < \epsilon$, that there is  a homeomorphism $\iota$ from $\calF_Q$ to $\calF_{R}$ satisfying
conditions i) and ii) in the definition
of the equivalence relation in Section~\ref{equrel}.
Let $\bar{\iota}$ denote the map induced from
the set of open, connected components of $\calF_Q\setminus S_Q$
to that of $\calF_{R}\setminus S_{R}$ (recall $S$ is the union of $d_\alpha=d_\beta$ and the border of $\calF$).
Recall that we have a continuously parameterized family of lines or hyperbolas. Thus for all open, connected components $U$ of $\calF_Q\setminus S_Q$, there
is an $(x,y)\in U$ such that
for all $\epsilon$, sufficiently small,
we have $(x,y)\in \bar{\iota}(U)$ as well.
We fix a continuous parametrization of ${\mathcal P}$ by the variable $t$.
From above, there is no $t$ between
$t_Q$ and $t_{R}$ such that $(d_\alpha-d_\beta)(a(t),b(t),x,y)=0$.
So by the Intermediate Value Theorem, $d_\alpha-d_\beta$ takes the same
sign at $(a(t_Q),b(t_Q),x,y)$ and $(a(t_{R}),b(t_{R}),x,y)$.
Thus condition iii) holds for $\iota$ as well.

We now prove the contrapositive of what remains to be proven.
Assume that  for $Q$, none of 1), 2) or 3) from this Lemma occur.
There are 13 classes for  $\calF_Q$ and $S_Q$, up to rotation, reflection, and homeomorphism, preserving conditions i) and ii) of the
equivalence relation; representatives are given in Figure~\ref{13ECs}.
Informally, we see that small perturbations in each of the 13 representatives in
Figure~\ref{13ECs} do not lead to a change in equivalence class.
In other words, we see
 for all $\epsilon > 0$,  sufficiently small, and all points
$R\neq Q$ on a fixed side of $Q$ in ${\mathcal P}$, with ${\rm dist}(Q,R)< \epsilon$,
that
there is a homoeomorphism from $\calF_Q$ to
 $\calF_{R}$
satisfying
conditions i) and ii) of the equivalence relation.
\end{proof}

\begin{figure}
\centering
\includegraphics[width=9cm]{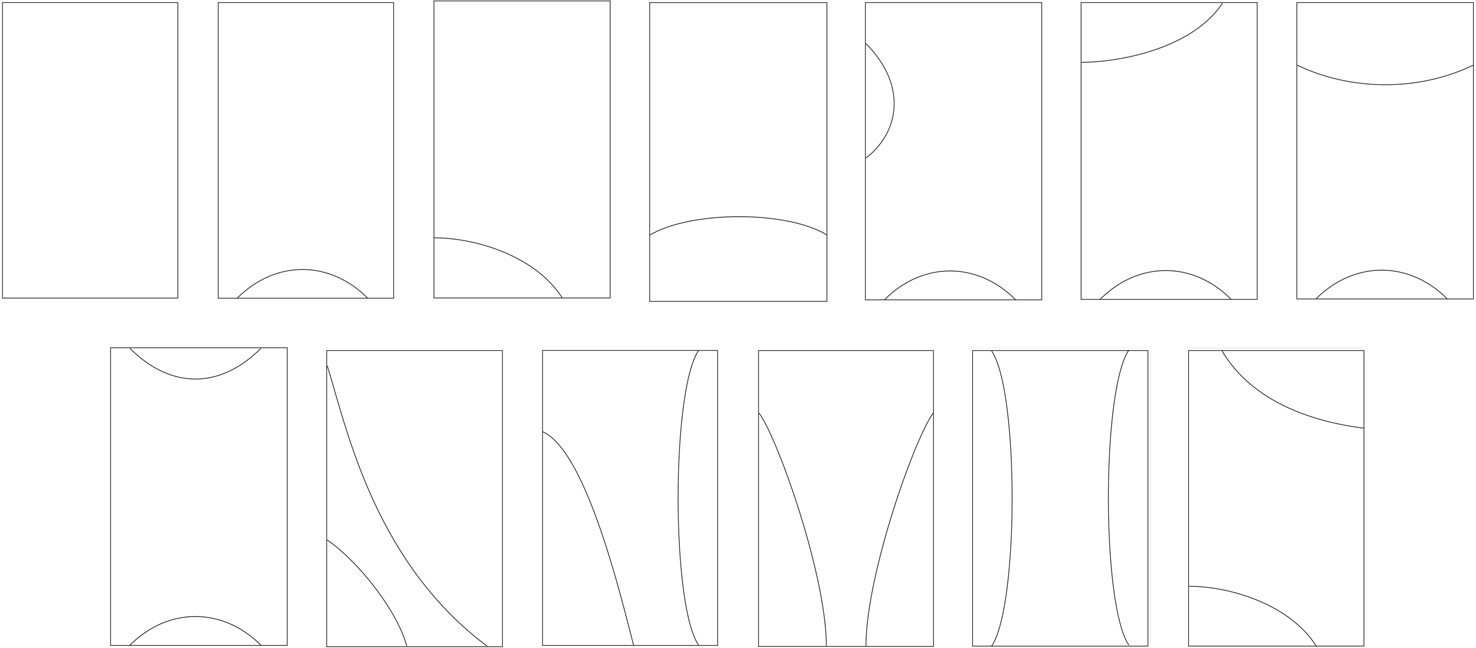}
\caption{13 Equivalence Classes for Lemma~\ref{DisDifIsoClaCha}}
\label{13ECs}
\end{figure}

For each pair $d_\alpha, d_\beta$ we determine where, in the
$(a,b)$ plank, each of the three conditions in
Lemma~\ref{DisDifIsoClaCha} can occur (taking each corner and side
of $\calF$ into account). Most determine a curve in the
$ab$-plank.  We then
break the $ab$-plank into equivalence classes for the labeled $\calF$'s taking only $d_\alpha$ and $d_\beta$ into
account.
For nine of these pairs, this information is useful for determining
the 47 equivalence classes and this is presented in Section~\ref{DistFcnPair}.
We do not attempt such computations for subsets of three or more of the distance functions simultaneously, because
the possibilities lead to a combinatorial explosion.

\section{Eliminating distance functions}
\label{elim}

For each of the six distance functions, we want to determine
subsets of the $ab$-plank for which there is no $(x,y)$, in  the
associated fundamental region $\calF$, for which the given distance function is smallest.
We use the results from Section~\ref{DistFcnPair} to prove  in
Section~\ref{elim2} that certain
distance functions can not be smallest on certain open subsets of
the $ab$-plank. The interior of each 2-dimensional equivalence
class is contained in one of these open subsets. As for 1- and
0-dimensional equivalence classes, by continuity, only those
distance functions that are smallest in all of the bordering
subsets can be smallest. In Table~\ref{Naive}, we record this
information by equivalence class.

\begin{table}

\begin{tabular}{l|l}
Equivalence classes & functions that can be smallest \\ \hline 1 & $d_{R,0}$ \\
2, 6 & $d_{R,0}$, $d_{U,0}$ \\
3 - 5, 10, 11 & $d_{R,0}$, $d_{U,0}$, $d_{U,1}$ \\
7 - 9 & $d_{R,0}$, $d_{U,1}$ \\
12 - 15, 17 & $d_{R,0}$, $d_{R,1}$, $d_{U,0}$, $d_{U,1}$ \\
16 & $d_{R,0}$, $d_{R,1}$,  $d_{U,1}$ \\
18, 25 & $d_{R,0}$, $d_{R,1}$  \\
19 - 21, 26 - 29, 32 & $d_{R,0}$, $d_{R,1}$,  $d_{U,0}$ \\
22, 23, 38 - 45 & $d_{R,0}$, $d_{R,1}$,  $d_{R,2}$, $d_{U,2}$ \\
24 & $d_{R,0}$, $d_{R,1}$,  $d_{U,2}$ \\
30 & $d_{R,0}$, $d_{R,1}$, $d_{U,0}$,  $d_{U,2}$ \\
31, 33 - 37, 46, 47 & $d_{R,0}$, $d_{R,1}$, $d_{R,2}$, $d_{U,0}$,  $d_{U,2}$ \\
\end{tabular}
\caption{Where distance functions can be smallest}
\label{Naive}
\end{table}

\section{Spontaneous generation of triple intersections}

In this section, we describe the most interesting proof technique
we used (in Sections~\ref{blessthan1} and \ref{bbiggerthan1}) to determine the equivalence classes
and associated labeled $\calF$'s.
Let $d_\alpha$, $d_\beta$, $d_\gamma$ be three distinct distance functions
from the six of concern. If, for a given
$(a,b)$, we have $(x,y)\in \calF$ such that $d_\alpha
(x,y)=d_\beta (x,y)=d_\gamma (x,y)$ then we call $(x,y)$ a {\it
triple intersection} for $d_\alpha, d_\beta, d_\gamma$.

It will sometimes help to know, for a given three distance functions, that there is no
triple intersection in the interior of $\calF$.
Let us start with a geometric example of what we call a
spontaneous generation of a triple intersection. Then we give
a rigorous definition and a description of how to find them. As an
example, let $d_\alpha, d_\beta, d_\gamma$ denote three distinct
distance functions. Assume for all $(a,b)$ in some open subset $U$ of the $ab$-plank,
that $d_\alpha=d_\beta$  is a single arc, concave up, with lowest point
$(\frac{a}{4},y_L)$ (with $0 < y_L < \frac{1}{2}$ and $y_L$ depending on $(a,b)$); $d_\alpha$ is smaller
above the arc and $d_\beta$ below.
 Assume  for all $(a,b)$ in $U$,   that $d_\beta=d_\gamma$ is a single
arc, concave down, with highest point $(\frac{a}{4},y_U)$ (with
$0 < y_U < \frac{1}{2}$ and $y_U$ depending on $(a,b)$); $d_\beta$ is smaller above the arc and $d_\gamma$
below. Assume that there is a path in $U$ from $(a_0,b_0)$ to $(a_1,b_1)$
(distinct points)
 and on all points of the path except $(a_0,b_0)$
we have $y_U < y_L$ and at $(a_0,b_0)$ we have $y_U=y_L$. Then at
$(a_0,b_0)$ we have the spontaneous generation of a triple
intersection for $d_\alpha$, $d_\beta$, $d_\gamma$ at
$(\frac{a}{4},y_L)$. Note that if we consider only
$d_\alpha$, $d_\beta$ and $d_\gamma$, then the labeled $\calF$
at $(a_0,b_0)$ is not equivalent to those for other points on the path.

Now we give a rigorous definition. Let $d_\alpha, d_\beta,
d_\gamma$ denote three distinct distance functions from the six of
concern. Assume there is a given $(a_0,b_0)$ for which
$d_\alpha=d_\beta=d_\gamma$ at some point $(x_0,y_0)$ in the
interior of $\calF$. Assume also that there is no neighborhood of
$(a_0,b_0)$  inside which, for every point $(a,b)$ in the
neighborhood we have $d_\alpha=d_\beta=d_\gamma$ at some point
$(x,y)$ in the interior of $\calF$. Then we define
$(a_0,b_0,x_0,y_0)$ to be a {\it spontaneous generation} of a
triple intersection for $d_\alpha, d_\beta, d_\gamma$.

To find the set of $(a,b)$'s at which they occur,
we can take the resultant of $d_\alpha-d_\beta$ and
$d_\alpha-d_\gamma$ with respect to $y$. This defines a surface
(with possibly more than one component) in $abx$-space. We then
take the projection of this surface onto the $ab$-plank. If
$(a_0,b_0,x_0,y_0)$ is a spontaneous generation, then the
projection of $(a_0,b_0,x_0)$ on the $ab$-plank will be on the
boundary of the projection of a neighborhood of $(a_0,b_0,x_0)$
in the surface.
At such a point $(a_0,b_0,x_0)$, a normal vector to the
surface will be parallel to the $ab$-plank. So we can find
conditions on $a$ and $b$ for
such
points by taking the resultant, with respect to $x$, of a
polynomial defining the surface, with its partial derivative with
respect to $x$. The following theorem will be used in two of the
proofs in Section~\ref{bbiggerthan1}
determining the equivalence classes.

\begin{theorem}
\label{spontaneous} For the  triples $d_{R,1}, d_{R,2}, d_{U,2}$ and
$d_{R,1}, d_{R,2}, d_{U,0}$
 there is
no spontaneous generation of a triple intersection.
\end{theorem}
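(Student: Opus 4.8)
The plan is to apply the resultant/normal-vector criterion for spontaneous generation described in the paragraph preceding the theorem, carried out once for each of the two triples. For the triple $d_{R,1}, d_{R,2}, d_{U,2}$, I would first read off from Table~\ref{10distances} the two polynomials $g_1 = d_{R,1}-d_{R,2}$ and $g_2 = d_{R,1}-d_{U,2}$ (each quadratic in $x,y$ with coefficients polynomial in $a,b$), then form $h(a,b,x) = \mathrm{Res}_y(g_1, g_2)$, which cuts out a surface $\Sigma$ in $abx$-space. A spontaneous generation projects to a point of the $ab$-plank lying on the boundary of the projection of $\Sigma$; at such a point a normal to $\Sigma$ is horizontal, i.e. $\partial h/\partial x = 0$ there. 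So I would compute $r(a,b) = \mathrm{Res}_x\!\big(h, \partial h/\partial x\big)$ and show that the resulting curve (or curves) in the $ab$-plank either is empty, or — more likely — meets the relevant region of the plank only in points that do not actually come from a genuine interior triple intersection with $0 \le x \le a/2$, $0 \le y \le 1/2$. Then I would repeat the whole computation verbatim with $d_{U,2}$ replaced by $d_{U,0}$, using $d_{R,2}=d_{U,0}$ which by Lemma~\ref{DisDifIsoClaCha}'s proof is the \emph{linear} equation $(2a+2b)x - a^2 - ab - a + b = 0$; the linearity of one of the two defining equations should make this second case substantially cleaner, since one can solve for $x$ and substitute.

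The key steps, in order: (1) write down $g_1, g_2$ explicitly from the table and clear denominators; (2) compute $h = \mathrm{Res}_y(g_1,g_2)$ and identify its degree and factorization in $x$ (factoring out any spurious components, e.g. those corresponding to the hyperbola being degenerate or to $y$-values outside $[0,\tfrac12]$); (3) compute $\partial h/\partial x$ and then $r = \mathrm{Res}_x(h, \partial h/\partial x)$, again discarding spurious factors; (4) for each surviving factor of $r$, determine the corresponding $x$ (a double root of $h$ in $x$), back-substitute to recover $y$, and check whether $(x,y)$ lies in the open fundamental region $\mathcal F$ for the given $(a,b)$; (5) conclude that in every case the candidate either fails to lie in the interior of $\mathcal F$ or does not correspond to an actual triple intersection of the three distance functions (one must verify $g_1 = g_2 = 0$ there, not merely that the resultant vanishes). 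Since in some $ab$-regions one expects the triple intersection to persist (so its locus has nonempty interior in $ab$), one must be careful: the claim is precisely that the \emph{boundary-of-projection} points never correspond to an interior point of $\mathcal F$, so the cases where a triple intersection does exist never get ``spontaneously generated'' — they either never leave the interior or they exit through the boundary of $\mathcal F$, which is governed by Lemma~\ref{DisDifIsoClaCha} instead.

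The main obstacle I expect is step (2)–(3): the iterated resultants of these quadratics will be high-degree polynomials in $a,b$ with large integer coefficients, and the real difficulty is \emph{bookkeeping} — correctly separating the geometrically meaningful factor from the many spurious ones (common factors introduced by the resultant, factors $b$, $a$, $a+b$, $b+1$ that record degenerations of the conics, and factors corresponding to intersection points with complex or out-of-range coordinates). A secondary subtlety is that $\partial h/\partial x = 0$ is a \emph{necessary} condition for a boundary-of-projection point but not sufficient: one must still confirm, for each candidate $(a_0,b_0)$, that the preimage point $(x_0,y_0)$ actually sits in the open region $\{0<x<a/2,\ 0<y<1/2\}$ and that all three differences vanish there. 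I would handle the coefficient explosion by factoring with a computer algebra system (the paper already uses Magma) and then presenting only the reduced defining polynomial of each component together with an elementary inequality argument — of the same flavor as the hyperbola-boundary arguments in the proof of the second Proposition above — showing the candidate point lies outside $\mathcal F$. The linearity of $d_{R,2}=d_{U,0}$ makes me confident the second triple is routine; the first triple, with two genuine quadratics, is where the real work lies.
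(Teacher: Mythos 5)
Your proposal follows essentially the same route as the paper: iterated resultants (first eliminating $y$, then taking $\mathrm{Res}_x$ of the resulting surface polynomial with its $x$-partial) to locate candidate $(a,b)$ for spontaneous generation. The computation turns out far cleaner than you anticipate --- the final loci are $b(a+1)(2a+2+b)=0$ and $a+b=0$, neither of which meets the open plank $a,b>0$, so your steps (4) and (5) (back-substitution and membership checks in the fundamental region) are never needed.
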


\begin{proof}
For $d_{R,1}, d_{R,2}, d_{U,2}$ the points obtained at which a
spontaneous generation of a
triple intersection could occur are on $b(a+1)(2a+2+b)=0$, which
does not pass through the $ab$-plank. For
$d_{R,1}, d_{R,2}, d_{U,0}$, after using a resultant to remove $y$
we get $4(a^2-2bx+ab+a-b-2ax)^2$. We then take the resultant, with respect to $x$, of
$a^2-2bx+ab+a-b-2ax$ with its partial derivative with respect to
$x$ to get $a+b$. But $a+b=0$ does not pass through the
$ab$-plank.
\end{proof}

\section{The equivalence class computations}

For each subset of the $ab$-plank described in Table~\ref{47ec},
we need only consider the distance functions listed in Table~\ref{Naive}.
Then, in the proofs in Sections~\ref{blessthan1} and \ref{bbiggerthan1}, we show that changes
in equivalence class can only occur
along certain curves in the $ab$-plank. Nine of these equivalence class borders come from the $ab$-curves
associated to the occurrences
described in
Lemma~\ref{DisDifIsoClaCha}. Five of these borders are where three distance functions have
a triple intersection on one of the four sides of $\calF$. The curve
$a-b^2=0$,
for $a' \leq a \leq 1$, is where $d_{R,0}=d_{R,1}=d_{U,0}=d_{U,1}$ at a point in $\calF$
and the curve
$(\sqrt{2}+1)a-b-1=0$, for $a' \leq a \leq 2\sqrt{2}-2$,
is where the hyperbolas $d_{R,1}=d_{U,0}$ and $d_{U,0}=d_{U,1}$ are both degenerate and the four lines making up those two hyperbolas all meet at one point on $x=0$ in $\calF$.

In
 Sections~\ref{blessthan1} and \ref{bbiggerthan1}, we also
show that the labeled $\calF$, for each $(a,b)$ in the given
subset, is homeomorphic (with the three properties described in the definition of the equivalence relation
in Section~\ref{equrel} of this article) to the one in Figure~\ref{TheFs1} or \ref{TheFs2}.
As there is no such homeomorphism between any distinct pair of
labeled $\calF$'s in Figure~\ref{TheFs1} and \ref{TheFs2}, we then know that each
of the 47 subsets
is a distinct equivalence class.

\section{Conclusion}

In Dudeney's problem we have a $12\times 12\times 30$ foot room.
The spider is $1$ foot below the ceiling and half way between the
sides of the room. In our notation we have
$(a,b,x,y)=(1,\frac{5}{2},0,\frac{5}{12})$. This $(a,b)$ is in
equivalence class 47 and this $(x,y)$ is on the border of the
region where $d_{U,2}$ ($=d_{U,-2}$) is smallest. Indeed, on the
path $p_{U,2}$,  the spider must cross 5 sides for the shortest
path to the fly. If, instead, the spider is $3$ feet below the
ceiling of the same room then we have $(x,y)=(0,\frac{3}{12})$.
This $(x,y)$ is on the border of where $d_{R,1}$ ($=d_{U,-1}$) is
smallest and the shortest path to the fly opposite crosses 4 sides of the room. Lastly, if the spider is
$5$ feet below the ceiling of the same room then we have
$(x,y)=(0,\frac{1}{12})$. This $(x,y)$ is on the border of where
$d_{U,0}$ is smallest and the obvious path, straight up, straight
across the top and then straight down to the fly opposite is the
shortest.

\section*{About the authors:}
  S.~ Michael Miller was an undergraduate at the time of the writing of this article
and is now pursuing his Ph.D.\ in Mathematics at the University of California, Los Angeles.
Edward Schaefer is a professor, whose area is arithmetic geometry. He taught
cryptography for two years at Mzuzu University in Malawi recently.

\subsection*{S.~ Michael Miller}~
UCLA Mathematics Department,
Box 951555
Los Angeles, CA 90095-1555.
smmiller@g.ucla.edu

\subsection*{Edward F.~Schaefer}~
Department of Mathematics and
Computer Science,
Santa Clara University,
Santa Clara, CA 95053.
eschaefer@scu.edu

\vspace{3mm}
\noindent
\begin{center}
{\large\bf APPENDIX}
\end{center}

\section{Distance functions in pairs}
\label{DistFcnPair}

\begin{lemma}
\label{R0R1} Below $a+2b-2=0$, $d_{R,0} < d_{R,1}$. Between
$a+2b-2=0$ and $b=1$, $d_{R,0}=d_{R,1}$ is a single arc, concave
up, with positive slopes, meeting $x=0$ with $0 < y < \frac{1}{2}$
and $y=\frac{1}{2}$ with $0 < x < \frac{a}{2}$. To the left of the
arc, $d_{R,1}$ is smaller and to the right, $d_{R,0}$ is.

On $b=1$, $d_{R,0}=d_{R,1}$ is a single arc with right endpoint
$(\frac{a}{2},\frac{1}{2})$. The left endpoint is on $x=0$ with $0
< y < \frac{1}{2}$. Above the arc, $d_{R,1}$ is smaller and below
it, $d_{R,0}$ is.

For $b > 1$, $d_{R,0}=d_{R,1}$ is a single arc. Its left endpoint
is on $x=0$ with $0 < y < \frac{1}{2}$. To the lower left of
$(4b-2)a-2b-1=0$, the right endpoint is on $x=\frac{a}{2}$ with $0
< y < \frac{1}{2}$. On $(4b-2)a-2b-1=0$, the right endpoint is
$(\frac{a}{2},0)$. Above $(4b-2)a-2b-1=0$,  the right endpoint  is
on $y=0$ with $0 < x < \frac{a}{2}$. Above the arc, $d_{R,1}$ is
smaller and below it, $d_{R,0}$ is.
\end{lemma}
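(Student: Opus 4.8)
Write $g=d_{R,1}-d_{R,0}$. From Table~\ref{10distances},
$g(x,y)=2x^{2}+4xy-2y^{2}-2bx-2by-\frac12 a^{2}+a-ab+b+\frac12$.
Its leading form $2x^{2}+4xy-2y^{2}$ is indefinite, so for each fixed $(a,b)$ the locus $g=0$ is a (possibly degenerate) hyperbola, hence \emph{unbounded}; since $\calF$ is bounded, every connected piece of $g=0$ that meets the interior of $\calF$ must also meet $\partial\calF$. I record the one-variable quadratics $g(0,y)$, $g(x,\frac12)$, $g(\frac a2,y)$, $g(x,0)$, and two recurring evaluations: $g(\frac a2,\frac12)=2a(1-b)$ and $g(0,0)=-\frac12 a^{2}-ab+a+b+\frac12=:c$, where $c=\frac12(1-a^{2})+a+b(1-a)>0$ for all $0<a\le 1$, $b>0$. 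Also $g(0,0)$-side discriminant bookkeeping uses the identity $b^{2}-2c=(a+b-1)^{2}-2$, so $g(x,0)=0$ at $2x=b\pm\sqrt{(a+b-1)^{2}-2}$.

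For the case $a+2b-2<0$, I would show $g>0$ on all of $\partial\calF$. On $x=0$ the concave parabola $g(0,y)=-2y^{2}-2by+c$ has its unique positive root at $y=\frac12(-b+\sqrt{(b+1)^{2}-a(a+2b-2)})>\frac12$ precisely because $a(a+2b-2)<0$, and $g(0,0)=c>0$, so $g>0$ there. On $y=\frac12$ the convex parabola $g(x,\frac12)$ has positive constant term $a\cdot\frac{2-a-2b}{2}$ and vertex at negative $x$, hence is positive on $[0,\frac a2]$. On $x=\frac a2$ the parabola $g(\frac a2,\cdot)$ is concave with $g(\frac a2,0)=-2ab+a+b+\frac12>0$ (using $a\le1$, $b<1$) and $g(\frac a2,\frac12)=2a(1-b)>0$, hence positive on $[0,\frac12]$. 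On $y=0$, since $a+b<2<1+\sqrt2$ and $a+b>0>1-\sqrt2$, the discriminant $4((a+b-1)^{2}-2)$ is negative, so $g(x,0)>0$. Thus $g=0$ misses $\partial\calF$, hence misses $\calF$, hence $g>0$ on $\calF$ by connectedness: $d_{R,0}<d_{R,1}$.

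For the three cases with $a+2b-2>0$ the plan is: (i) locate the boundary crossings by the same one-variable analysis. For $a+2b>2$, $b<1$: the $x=0$ crossing is the root above, now in $(0,\frac12)$ (using $a+2b>2$ and $c>0$); the $y=\frac12$ crossing is the unique positive root of $g(x,\frac12)$, which lies in $(0,\frac a2)$ because the constant term is now negative while $g(\frac a2,\frac12)=2a(1-b)>0$; and $g>0$ on the other two sides (negative discriminant on $y=0$ since $a+b<2<1+\sqrt2$; concavity plus positive endpoint values on $x=\frac a2$). For $b=1$ one instead has $g(\frac a2,\frac12)=0$, moving the right endpoint to the corner $(\frac a2,\frac12)$. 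For $b>1$ the right endpoint migrates among $x=\frac a2$, the corner $(\frac a2,0)$, and $y=0$ exactly according to the sign of $g(\frac a2,0)=-2ab+a+b+\frac12$, which vanishes on $(4b-2)a-2b-1=0$; here one must also track the (now possibly real) roots of $g(x,0)=2x^{2}-2bx+c$, whose vertex at $x=\frac b2>\frac12\ge\frac a2$ keeps $g(\cdot,0)$ monotone on $[0,\frac a2]$. (ii) Each crossing is transversal and there are exactly two, on different sides, so $g=0\cap\calF$ is a single simple arc joining them; it lies on one branch, so by the no-inflection property of conics it has constant concavity, and I would pin this down as ``up'' by evaluating $y''=-\,\frac{4(g_y^{2}-2g_xg_y-g_x^{2})}{g_y^{3}}$ at the $x=0$ endpoint, where $g_y<0$. (iii) Which function is smaller on which side is fixed by one corner: $g(0,0)=c>0$ gives $d_{R,0}$ smaller on the side containing $(0,0)$, and $g(0,\frac12)=a(1-\frac a2-b)<0$ gives $d_{R,1}$ smaller on the other. (iv) The slope claim follows from $y'=-g_x/g_y=-\frac{2x+2y-b}{2x-2y-b}$ together with the signs of $g_x=4x+4y-2b$ and $g_y=4x-4y-2b$ along the arc.

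The main obstacle is part (iv): signing $y'$ along the \emph{whole} arc — not just at its endpoints — requires localizing the arc inside $\calF$ closely enough to control $2x+2y-b$ and $2x-2y-b$, and this is genuinely delicate, for instance near $a=1$, $b\to1^{-}$, and near the line $a+b=1+\sqrt2$, where the conic degenerates ($\det$ of its matrix is $4(b^{2}-2c)$, zero there) and $g$ factors as $2(x+(1+\sqrt2)y-b/2)(x+(1-\sqrt2)y-b/2)$, so the ``arc'' is a line segment and ``concave up'' must be read in the limiting sense. Establishing the slope and concavity assertions uniformly over each region may require a finer subdivision or a sharper pinch on the location of the arc than the crude crossing-count gives; everything else reduces to elementary vertex/discriminant bookkeeping for one-variable quadratics, using only $0<a\le1$ and the defining (in)equalities of the four cases.
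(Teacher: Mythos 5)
Your decomposition---restrict $g=d_{R,1}-d_{R,0}$ to the four sides of $\calF$, count the transversal boundary crossings by one-variable vertex/discriminant bookkeeping, and use the unboundedness of hyperbola branches plus connectedness to propagate signs---is the right one, and it matches the computations the paper performs for this pair elsewhere (the roots $2x=b\pm\sqrt{(a+b-1)^2-2}$ on $y=0$ and the slope formula $\frac{dy}{dx}=\frac{2x+2y-b}{-2x+2y+b}$ both appear in the body of the paper). The paper states Lemma~\ref{R0R1} without proof, so there is nothing to compare line by line; your fully worked case $a+2b-2<0$ is complete and correct, and your outline for the endpoint locations, the single-arc claim, the side labels, and the concavity in the remaining cases goes through.

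The obstacle you flag in step (iv) is real and, for the $b<1$ case, cannot be overcome: the phrase ``with positive slopes'' in the second sentence of the lemma is false on part of the region $a+2b-2>0$, $b<1$. The slope at the left endpoint $(0,y_0)$ is $(2y_0-b)/(2y_0+b)$, and $y_0>\frac{b}{2}$ holds exactly when $(a,b)$ lies inside the ellipse $a^2+(2b-2)a+3b^2-2b-1=0$ (the same ellipse the paper identifies as the locus where $d_{R,0}$, $d_{R,1}$, $d_{U,1}$ meet on $x=0$, since $d_{R,1}=d_{U,1}$ contains $y=\frac{b}{2}$). Outside that ellipse but still with $a+2b>2$ and $b<1$---for instance at $(a,b)=(0.9,0.95)$, where $y_0\approx 0.403<\frac{b}{2}=0.475$---the arc leaves $x=0$ with negative slope, descends to its minimum on the line $x+y=\frac{b}{2}$ (near $(0.075,0.40)$ in this example), and only then rises to meet $y=\frac{1}{2}$. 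So the lemma should be weakened: the arc is concave up with positive slopes only on and above $x+y=\frac{b}{2}$, and has positive slopes throughout only when $(a,b)$ is inside that ellipse. Do not spend further effort trying to prove (iv) as stated; record the corrected version instead. Everything else in your plan, including the which-function-is-smaller-on-which-side determination via the corner values $g(0,0)>0$ and $g(0,\frac{1}{2})<0$, needs no repair.
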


Note $a+2b-2=0$ is where $d_{R,0}=d_{R,1}$ at $(0,\frac{1}{2})$.

\begin{lemma}
\label{R0R2} For $b < 1$, $d_{R,0} < d_{R,2}$. For $b > 1$,
$d_{R,0}=d_{R,2}$ is a single arc with negative slopes (for $0 < y$). Its upper
endpoint is on $y=\frac{1}{2}$ with $0 < x < \frac{a}{2}$. Below
$2ab-2a-1=0$, the lower endpoint is on $x=\frac{a}{2}$ with $0 < y
< \frac{1}{2}$. On $2ab-2a-1=0$, the lower endpoint is
$(\frac{a}{2},0)$. Above $2ab-2a-1=0$ , the lower endpoint is on
$y=0$ with $0 < x < \frac{a}{2}$. To the left of the arc,
$d_{R,0}$ is smaller and to the right of it, $d_{R,2}$
is.\end{lemma}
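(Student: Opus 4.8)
The plan is to work directly with the polynomial $d_{R,2}-d_{R,0}$. From Table~\ref{10distances} one gets
\[
d_{R,2}-d_{R,0}=4x^2-4(a+b)x-4y^2+(a+1)^2=4\left(x-\frac{a+b}{2}\right)^2-4y^2-(b-1)(2a+b+1),
\]
so the curve $d_{R,0}=d_{R,2}$ is the hyperbola $\left(x-\frac{a+b}{2}\right)^2-y^2=\frac14(b-1)(2a+b+1)$, whose right-hand side has the sign of $b-1$ since $2a+b+1>0$ throughout the plank. I will use repeatedly that on $\calF$ we have $x-\frac{a+b}{2}\le\frac a2-\frac{a+b}{2}=-\frac b2<0$, hence $\left(x-\frac{a+b}{2}\right)^2\ge\frac{b^2}{4}$ there. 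For $b<1$, substituting this together with $y^2\le\frac14$ into the displayed expression gives $d_{R,2}-d_{R,0}\ge b^2-1-(b-1)(2a+b+1)=2a(1-b)>0$, so $d_{R,0}<d_{R,2}$ on all of $\calF$, which disposes of the first assertion.

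For $b>1$ set $c:=\frac14(b-1)(2a+b+1)>0$. The right branch of the hyperbola has $x>\frac{a+b}{2}>\frac a2$ and so misses $\calF$; within $\calF$ the curve is therefore the graph of $x=g(y):=\frac{a+b}{2}-\sqrt{y^2+c}$, a continuous, strictly decreasing function of $y$ on $[0,\frac12]$ with $g'(y)=-\frac{y}{\sqrt{y^2+c}}<0$ for $y>0$. Hence the curve meets $\calF$ in a single arc, along which $\frac{dy}{dx}<0$ for $y>0$; this gives the ``single arc with negative slopes'' claim, and (since the trace of a monotone curve through $\calF$ is a single interval in $y$) it also shows that the arc cannot leave and re-enter $\calF$. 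To locate the endpoints I would trace the arc with $y$ decreasing from $\frac12$, along which $x=g(y)$ increases. At $y=\frac12$ one checks $g(\frac12)>0$ (equivalent to $a^2+2a>0$) and $g(\frac12)<\frac a2$ (equivalent to $2a(b-1)>0$), so the upper endpoint lies in the interior of the side $y=\frac12$. Solving $g(y)=\frac a2$ gives $y^2=\frac14\bigl(-(2ab-2a-1)\bigr)$, which produces exactly the three cases in the statement: if $2ab-2a-1<0$ the arc meets $x=\frac a2$ at $y=\frac12\sqrt{-(2ab-2a-1)}$, which lies in $(0,\frac12)$ since $b>1$ forces $-(2ab-2a-1)<1$; if $2ab-2a-1=0$ it meets $x=\frac a2$ at $y=0$; and if $2ab-2a-1>0$ then $g(0)<\frac a2$, so the arc never reaches $x=\frac a2$ and instead exits through $y=0$ at $x=g(0)\in(0,\frac a2)$, the two bounds there reducing to $(a+1)^2>0$ and $2ab-2a-1>0$.

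Finally, $\frac{\partial}{\partial x}(d_{R,2}-d_{R,0})=8\left(x-\frac{a+b}{2}\right)<0$ on $\calF$, so for each fixed $y$ the difference $d_{R,2}-d_{R,0}$ is strictly decreasing in $x$; since it vanishes on the arc, $d_{R,0}<d_{R,2}$ to the left of the arc and $d_{R,2}<d_{R,0}$ to the right. Every computation here is elementary; the one place that needs care is the endpoint case analysis for $b>1$, where each inequality that places an endpoint on a particular side of $\calF$ must be checked to be consistent with --- and only with --- the matching sign condition on $2ab-2a-1$ (given $0<a\le1$, $b>1$), and where one must also confirm that only the left branch of the hyperbola meets $\calF$. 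The monotonicity of $g$ is what keeps this bookkeeping honest, since it reduces ``which side of $\calF$ does the arc leave through'' to comparing $g(0)$ with $0$ and $\frac a2$.
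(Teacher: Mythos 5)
Your proof is correct and complete: the completed-square form of $d_{R,2}-d_{R,0}$, the bound $\left(x-\frac{a+b}{2}\right)^2\ge\frac{b^2}{4}$ on $\calF$, the parametrization $x=g(y)$ of the left branch, and the endpoint/sign bookkeeping all check out (in particular $y^2=\frac14\bigl(-(2ab-2a-1)\bigr)$ at $x=\frac a2$ correctly reproduces the three cases). The paper states Lemma~\ref{R0R2} without proof, so there is nothing to compare against; your argument is the natural direct verification and would serve as a proof of the lemma as stated.
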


Note $b=1$ is where $d_{R,0}=d_{R,2}$ at $(\frac{a}{2},\frac{1}{2})$.

\begin{lemma}
\label{R0U0} On $a=1$, $d_{R,0}=d_{U,0}$ is a line segment
connecting $(0,0)$ and $(\frac{1}{2},\frac{1}{2})$. Between $a=1$
and $a^2+2ab-2b=0$, $d_{R,0}=d_{U,0}$ is a single arc, concave up,
with non-negative slopes, with left endpoint on $x=0$ with $0 < y
< \frac{1}{2}$ and right endpoint on $y=\frac{1}{2}$ with $0 < x <
\frac{a}{2}$. Above the arc, $d_{U,0}$ is smaller and below it,
$d_{R,0}$ is. Above $a^2+2ab-2b=0$,  $d_{R,0} < d_{U,0}$.
\end{lemma}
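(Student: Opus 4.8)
The plan is to reduce everything to the sign of a single polynomial. From Table~\ref{10distances}, $d_{R,0}=4y^2+a^2+2ab+b^2$ and $d_{U,0}=4x^2+b^2+2b+1$, so
\[
 d_{R,0}-d_{U,0}=4(y^2-x^2)+a^2+2ab-2b-1=4(y^2-x^2)+(a-1)(a+1+2b).
\]
Set $C=C(a,b):=(1-a)(a+1+2b)=1-(a^2+2ab-2b)$, so that $d_{R,0}-d_{U,0}=4(y^2-x^2)-C$, with $C\geq 0$ on the $ab$-plank and $C=0$ exactly on $a=1$. Hence, for fixed $(a,b)$, the locus $d_{R,0}=d_{U,0}$ is the conic $y^2-x^2=C/4$ (the pair of lines $y=\pm x$ when $a=1$, otherwise a rectangular hyperbola), and at any point $d_{U,0}<d_{R,0}$ iff $y^2-x^2>C/4$ while $d_{R,0}<d_{U,0}$ iff $y^2-x^2<C/4$. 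All three parts of the lemma will come from reading this off in $\calF=\{0\leq x\leq a/2,\ 0\leq y\leq 1/2\}$ for the three relevant regions of the $ab$-plank.

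For the case $a=1$, $C=0$ gives $d_{R,0}-d_{U,0}=4(y-x)(y+x)$, which on $\calF$ (where $x,y\geq 0$) vanishes exactly on the segment $y=x$ --- joining $(0,0)$ to $(\tfrac12,\tfrac12)$ since here $a/2=\tfrac12$ --- is negative for $y<x$ and positive for $y>x$, giving the stated below/above behaviour. For the region above $a^2+2ab-2b=0$, which (evaluating the expression at $a=1$ gives $1>0$, and it decreases in $b$ since $\partial_b(a^2+2ab-2b)=2(a-1)<0$) is the side $a^2+2ab-2b<0$, i.e. $C>1$, one has $4(y^2-x^2)\leq 4(\tfrac12)^2=1<C$ throughout $\calF$; hence $d_{R,0}-d_{U,0}<0$ and $d_{R,0}<d_{U,0}$ everywhere, as claimed.

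It remains to treat the region ``between'' $a=1$ and $a^2+2ab-2b=0$, i.e. $a<1$ and $a^2+2ab-2b>0$; there $0<C<1$, the upper bound being automatic from $C=1-(a^2+2ab-2b)$. The locus $d_{R,0}=d_{U,0}$ in $\calF$ is then the arc $y=\sqrt{x^2+C/4}$ with $y\geq 0$. I would check its endpoints: it meets $x=0$ at $y=\sqrt{C}/2\in(0,\tfrac12)$, and meets $y=\tfrac12$ at $x=\sqrt{1-C}/2>0$, where the inequality $x<a/2$ is equivalent to $1-C<a^2$, i.e. to $2b(a-1)<0$, which holds since $a<1$ and $b>0$. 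Monotonicity of $y=\sqrt{x^2+C/4}$ (with $y'=x/y\geq 0$, vanishing only at $x=0$) then shows the arc runs inside $\calF$ from that point of $x=0$ to that point of $y=\tfrac12$ with non-negative slopes, and $y''=(y^2-x^2)/y^3=(C/4)/y^3>0$ gives concavity upward; the sign statement above gives ``$d_{U,0}$ smaller above the arc, $d_{R,0}$ smaller below''.

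The argument is essentially mechanical, so there is no genuine obstacle; the only points that need care are the bookkeeping of which side of $a^2+2ab-2b=0$ is ``between'' versus ``above'' (handled by evaluating at $a=1$ and using monotonicity in $b$) and the verification that the right endpoint of the arc lands strictly inside the side $y=\tfrac12$ rather than beyond the corner $(a/2,\tfrac12)$ --- which is exactly the inequality $2b(a-1)<0$ above, equivalently the observation that at $x=a/2$ the conic already has $y=\sqrt{a^2+C}/2>\tfrac12$.
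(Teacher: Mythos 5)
Your proof is correct and complete. The paper states Lemma~\ref{R0U0} without proof (only noting that $a^2+2ab-2b=0$ is where $d_{R,0}=d_{U,0}$ passes through $(0,\tfrac12)$, which is exactly your $C=1$ threshold), and your reduction to the sign of $4(y^2-x^2)-C$ with $C=(1-a)(a+1+2b)$ is the natural direct computation that supplies the omitted verification, including the endpoint, slope, and concavity checks.
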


Note $a^2+2ab-2b=0$ is where $d_{R,0}=d_{U,0}$ at
$(0,\frac{1}{2})$.

From Lemma~\ref{R1U1}, it suffices to consider $d_{U,1}$ for $b < 1$.

\begin{lemma}
\label{R0U1} For all $(a,b)$, $d_{R,0}=d_{U,1}$ at $(\frac{a}{2},\frac{1}{2})$.
Below $a+b-1=0$, $d_{R,0}=d_{U,1}$ meets $\calF$ nowhere else  and $d_{R,0} < d_{U,1}$ in the
interior of $\calF$. Between $a+b-1=0$ and $b=1$,
$d_{R,0}=d_{U,1}$ is a single arc, concave up, with right endpoint
at $(\frac{a}{2},\frac{1}{2})$. Between $a+b-1=0$ and $a+2b-2=0$,
the left endpoint is on $y=\frac{1}{2}$ with $0 < x <
\frac{a}{2}$. On $a+2b-2=0$, the left endpoint is
$(0,\frac{1}{2})$. Between $a+2b-2=0$ and $b=1$, the left endpoint
is on $x=0$ with $0 < y < \frac{1}{2}$. In each case, $d_{R,0}$ is
smaller below the arc and $d_{U,1}$ above it.
\end{lemma}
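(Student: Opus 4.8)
The plan is to reduce everything to the single polynomial $g:=d_{U,1}-d_{R,0}$, which from Table~\ref{10distances} equals
\[
g(x,y)=2x^2-4xy-2y^2+2bx-2by-\frac{1}{2}a^2+a-ab+b+\frac{1}{2}.
\]
Its quadratic part $2x^2-4xy-2y^2$ is indefinite, so for each fixed $(a,b)$ the curve $\{g=0\}$ is a hyperbola (possibly degenerate), never an ellipse; in particular it has no compact component. Also $g_x=4x-4y+2b$ and $g_y=-(4x+4y+2b)$, so $g_y<0$ on all of $\calF$ and is bounded away from $0$ there. Two structural facts follow and will be used throughout: (i) on $\calF$, $\{g=0\}$ is a disjoint union of graphs $y=\psi(x)$ with no vertical asymptote, and having no compact component none of its pieces is a closed loop, so every connected piece of $\{g=0\}\cap\calF$ is a simple arc joining two distinct points of $\partial\calF$; (ii) consequently, if $\{g=0\}$ meets $\partial\calF$ in at most one point it misses the interior of $\calF$ entirely, and if it meets $\partial\calF$ in exactly two points then $\{g=0\}\cap\calF$ consists of at most one arc. (This is the same device, insensitive to degeneration of the conic, used earlier to locate the hyperbola $d_{D,1}=d_{R,0}$ inside $\calF'$.)

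Next I restrict $g$ to the four sides of $\calF$, where it factors conveniently:
\[
g\!\left(\frac{a}{2},y\right)=-2\left(y-\frac{1}{2}\right)\!\left(y+a+b+\frac{1}{2}\right),\qquad
g\!\left(x,\frac{1}{2}\right)=2\left(x-\frac{a}{2}\right)\!\left(x-\left(1-b-\frac{a}{2}\right)\right),
\]
\[
g(x,0)=2x^2+2bx+C,\qquad g(0,y)=-2y^2-2by+C,\qquad C:=-\frac{1}{2}a^2+a-ab+b+\frac{1}{2}.
\]
From the first factorization the only zero of $g$ on the side $x=\frac{a}{2}$ is the corner $(\frac{a}{2},\frac{1}{2})$, which already proves the first sentence of the Lemma. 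Since $a\le 1$ gives $a(a+2b)\le a+2b$, one gets $2C=-a(a+2b)+2(a+b)+1\ge a+1>0$, hence $g(x,0)>0$ for all $x\ge0$, so $g$ never vanishes on the bottom side. On the top side the second root $x_0:=1-b-\frac{a}{2}$ lies in $(0,\frac{a}{2})$ exactly when $a+b-1>0$ and $a+2b-2<0$, equals $0$ exactly when $a+2b-2=0$ (the displayed note), exceeds $\frac{a}{2}$ when $a+b-1<0$, and is negative when $a+2b-2>0$. On the left side $g(0,\cdot)$ is a downward parabola with a unique positive root $y_L$, and $y_L<\frac12$ iff $a+2b-2>0$, $y_L=\frac12$ iff $a+2b-2=0$, and $y_L>\frac12$ iff $a+2b-2<0$. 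Reading these off in each of the three parameter strips named in the Lemma pins down $\{g=0\}\cap\partial\calF$ exactly.

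Now assemble. Below $a+b-1=0$ one also has $a+2b-2<0$, so $x_0>\frac{a}{2}$ and $y_L>\frac12$ and $\{g=0\}\cap\partial\calF=\{(\frac{a}{2},\frac{1}{2})\}$; by (ii) the curve misses the interior, so $g$ has constant sign on the connected set $\calF\setminus\{(\frac{a}{2},\frac{1}{2})\}$, and that sign is $+$ since $g>0$ on the bottom side, i.e. $d_{R,0}<d_{U,1}$ in the interior. In the strip between $a+b-1=0$ and $a+2b-2=0$ the boundary zeros are $(\frac{a}{2},\frac{1}{2})$ and $(x_0,\frac{1}{2})$ with $0<x_0<\frac{a}{2}$; on $a+2b-2=0$ they are $(\frac{a}{2},\frac{1}{2})$ and $(0,\frac{1}{2})$; between $a+2b-2=0$ and $b=1$ they are $(\frac{a}{2},\frac{1}{2})$ and $(0,y_L)$ with $0<y_L<\frac12$. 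In each case $g>0$ on the bottom side while $g<0$ on part of the top side (and, in the last strip, also on the left side), so $\{g=0\}$ has a component in the interior; by (ii) that component is the unique arc, a graph $y=\psi(x)$ joining the two boundary zeros, with right endpoint $(\frac{a}{2},\frac{1}{2})$ and left endpoint on $y=\frac12$, at $(0,\frac12)$, or on $x=0$ respectively, as claimed.

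It remains to get the concavity and the sign pattern. For a curve $g=0$ one has
\[
y''=-\frac{g_{xx}g_y^2-2g_{xy}g_xg_y+g_{yy}g_x^2}{g_y^3},
\]
and with $g_{xx}=4$, $g_{xy}=-4$, $g_{yy}=-4$ and $g_x+g_y=-8y$ this becomes $y''=-32\bigl(8y^2-(2x-2y+b)^2\bigr)/g_y^3$. The key point is that on $\{g=0\}$ the bracket is constant: using the relation $g=0$ to eliminate the quadratic terms gives $8y^2-(2x-2y+b)^2=2-(a+b-1)^2$, which is positive because $a\le1$, $b\le1$ force $a+b<1+\sqrt{2}$. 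Since $g_y^3<0$ on $\calF$, we get $y''>0$ along the whole arc: it is concave up, and since both endpoints lie on or below $y=\frac12$ the arc stays inside $\calF$. Finally, $g>0$ on the bottom side, which lies below the arc, so $d_{R,0}$ is the smaller of the two below the arc; above the arc $g<0$, as one sees by evaluating $g$ at a point of the top side strictly between $x_0$ and $\frac{a}{2}$ (first two strips) or at a point of the top or left side lying above the arc (third strip), so $d_{U,1}$ is smaller there. The computations are routine; the real work is the case split over the three $(a,b)$-strips and the topological bookkeeping in (i)–(ii) — which, resting only on $g_y\neq0$ and on $\{g=0\}$ having no compact component, also dispenses with any worry about the conic degenerating for special $(a,b)$. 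The one genuine surprise is the identity $8y^2-(2x-2y+b)^2=2-(a+b-1)^2$ on $\{g=0\}$, which is what makes the arc concave up uniformly rather than just near its ends.
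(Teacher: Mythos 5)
Your proof is correct, and it is considerably more than the paper offers: Lemma~\ref{R0U1}, like the other pairwise lemmas of Section~\ref{DistFcnPair}, is stated in the paper without proof, the computations being left implicit. Your argument is in the same spirit as the paper's one worked example of this kind (the proof that $d_{D,1}$ is never smallest, which also locates a conic inside a region by intersecting it with the boundary), but you organize it more robustly: the observation that $g_y=-(4x+4y+2b)<0$ on all of $\calF$, combined with the absence of compact components, turns the whole problem into bookkeeping of boundary zeros, and it correctly sidesteps any worry about the conic degenerating (the singular point of a degenerate $d_{R,0}=d_{U,1}$ lies on $x+y=-b/2$, outside $\calF$, exactly as your criterion detects). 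I checked the boundary factorizations, the inequality $2C\ge a+1$, and the identity $8y^2-(2x-2y+b)^2=2C-b^2=2-(a+b-1)^2$ on $\{g=0\}$; all are right, and that last identity is a genuinely nice touch, since it gives concavity of the arc uniformly rather than pointwise. Two very minor presentational points: in statement (ii) you should say explicitly (as your later argument implicitly uses) that the two boundary endpoints of an interior arc are distinct because the arc is a graph over a nondegenerate $x$-interval; and the claim that convexity plus endpoints on $y=\frac12$ keeps the arc inside $\calF$ is not actually needed, since the arc is by construction a component of $\{g=0\}\cap\calF$. Neither affects correctness.
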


Note  $a+b-1=0$ is where $y=\frac{1}{2}$ is tangent to
$d_{R,0}=d_{U,1}$ at $(\frac{a}{2},\frac{1}{2})$.

From Lemma~\ref{R0R2}, it suffices to consider $d_{R,2}$ for
$b>1$.

\begin{lemma}
\label{R1R2} Assume $b > 1$. For all $(a,b)$,  $d_{R,1}=d_{R,2}$
at $(\frac{a}{2},\frac{1}{2})$. Below $2a-2b+1=0$,
$d_{R,1}\neq d_{R,2}$ elsewhere and $d_{R,1} < d_{R,2}$ in the
interior of $\calF$. Above $2a-2b+1=0$, $d_{R,1}=d_{R,2}$ is the union of
$(\frac{a}{2},\frac{1}{2})$ and a single arc with positive slopes.
The left endpoint of the arc is on $x=0$ with $0 < y <
\frac{1}{2}$ for $(a,b)$ to the left of  $3a^2+(2b+2)a-2b+1=0$,
 is $(0,0)$ for $3a^2+(2b+2)a-2b+1=0$, and is on $y=0$ with $0 < x < \frac{a}{2}$
 to the right of $3a^2+(2b+2)a-2b+1=0$. The right endpoint is on
$x=\frac{a}{2}$ with $0 < y < \frac{1}{2}$ below $a-b+1=0$ and at
$(\frac{a}{2},\frac{1}{2})$ on or above $a-b+1=0$. Above the arc,
$d_{R,1}$ is smaller and below it, $d_{R,2}$ is.\end{lemma}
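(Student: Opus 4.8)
The plan is to follow the template of the preceding lemmas (cf.\ Lemma~\ref{R0R1}): the whole statement is governed by the single quadratic $D:=d_{R,1}-d_{R,2}$, and one must (a) recognize $D=0$ as a hyperbola, (b) locate its intersections with the four sides of $\calF$, (c) determine the sign of $D$ on each side of the resulting arc, and (d) determine the slopes. From Table~\ref{10distances} one computes
\[ D \;=\; -2x^2+4xy+2y^2+(4a+2b)x-2by-\tfrac32 a^2-(b+1)a+b-\tfrac12 . \]
The quadratic part has $\delta^2-4\gamma\phi=4^2-4(-2)(2)=32>0$, so, as in the discussion before Lemma~\ref{DisDifIsoClaCha}, for each fixed $(a,b)$ the curve $D=0$ is a (possibly degenerate) hyperbola; and $(x,y)=(\tfrac a2,\tfrac12)$ satisfies $D=0$, giving the first assertion. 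Translating by $u=x-\tfrac a2$, $v=y-\tfrac12$ brings the equation to the tidy form $-u^2+2uv+v^2+(a+b+1)u+(a+1-b)v=0$, a conic through the origin, which I would use for the side $x=\tfrac a2$ and for the degeneracy test. The relevant range is $b>1$ (Lemma~\ref{R0R2}).

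Next I would settle the global shape. The $3\times3$ matrix of the $(u,v)$-conic has determinant a nonzero scalar times $a^2-2ab-b^2+2a-2b+1$; solving $a^2-2ab-b^2+2a-2b+1=0$ for $a$ gives $a=(b-1)\pm b\sqrt2$, and for $b>1$ neither root lies in $(0,1]$, so over $\{0<a\le1,\ b>1\}$ the conic is always a genuine hyperbola, and indeed $a^2-2ab-b^2+2a-2b+1<0$ throughout. For slopes: $D_x=4(y-x)+4a+2b\ge 2a+2b>0$ on all of $\bar{\calF}$ (its minimum is at $(\tfrac a2,0)$), so $D=0$ has no horizontal tangent in $\calF$; and $D_y=4(x+y)-2b$ vanishes only on the line $x+y=\tfrac b2$, which, substituting $y=\tfrac b2-x$ into $D=0$, meets $D=0$ at two points both having $x\ge\tfrac a2$ --- the condition for the smaller root to satisfy $x<\tfrac a2$ reducing to $(a-b+1)^2<0$ (this is where the negativity of $a^2-2ab-b^2+2a-2b+1$ is used, to make that intersection real). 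Hence the arc has no vertical tangent in $\mathrm{int}\,\calF$ either, so it is a monotone graph; evaluating $-D_x/D_y$ at the right endpoint --- $(\tfrac a2,\,b-a-\tfrac12)$ when $a+\tfrac12<b<a+1$, where $D_y=2b-2a-2<0<D_x$, or the corner $(\tfrac a2,\tfrac12)$ when $b\ge a+1$, where $D_y=2(a+1-b)\le0<D_x$ --- shows the slope is positive.

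The bulk of the work is locating the endpoints via the four sides. On $y=\tfrac12$, $D=0$ factors with roots $x=\tfrac a2$ and $x=\tfrac32 a+b+1$, the latter well outside $\calF$, so $y=\tfrac12$ contributes only the corner. On $x=\tfrac a2$ (i.e.\ $u=0$) the equation reads $v(v+a+1-b)=0$, so besides the corner the intersection is $y=b-a-\tfrac12$, which lies in $(0,\tfrac12)$ exactly when $a+\tfrac12<b<a+1$, equals $0$ on $2a-2b+1=0$, equals $\tfrac12$ (a tangency) on $a-b+1=0$, and is negative for $b<a+\tfrac12$. On $x=0$, $D=0$ is a quadratic in $v$ with roots $v=\tfrac12\bigl((b-1)\pm\sqrt{(b-1)^2+3a^2+2ab+2a}\bigr)$; for $b>1$ the $+$ root is positive (outside $\calF$) and the $-$ root lies in $(-\tfrac12,0)$ precisely when $3a^2+(2b+2)a-2b+1<0$, hitting $(0,0)$ on $3a^2+(2b+2)a-2b+1=0$. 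On $y=0$ the roots are $x=\tfrac12\bigl((2a+b)\pm\sqrt{(a+b)^2-2a+2b-1}\bigr)$, the larger always exceeding $\tfrac a2$, the smaller lying in $(0,\tfrac a2)$ exactly when both $2a-2b+1<0$ and $3a^2+(2b+2)a-2b+1>0$. Assembling these: below $2a-2b+1=0$ all four side-intersections fall outside $\calF$ except the corner, so $D$ has a single sign on $\mathrm{int}\,\calF$, which a test value shows is negative, i.e.\ $d_{R,1}<d_{R,2}$; above $2a-2b+1=0$ the arc runs from a left endpoint --- on $x=0$, at $(0,0)$, or on $y=0$ according to the position relative to $3a^2+(2b+2)a-2b+1=0$ --- to a right endpoint --- on $x=\tfrac a2$, or the corner once $b\ge a+1$ --- exactly as stated, with the isolated corner point detached from the arc precisely when $a+\tfrac12<b<a+1$ (since there the branch through the corner detours into $x>\tfrac a2$). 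Finally, the sign of $D$ at a test point just below the arc yields that $d_{R,2}$ is smaller below it and $d_{R,1}$ above.

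The main obstacle I anticipate is the bookkeeping of the last paragraph: tracking which root of each side-quadratic is ``the'' relevant one as $(a,b)$ ranges over $\{b>1\}$, following how the two endpoints migrate among the sides and corners of $\calF$ as $(a,b)$ crosses $2a-2b+1=0$, $a-b+1=0$ and $3a^2+(2b+2)a-2b+1=0$, and --- in the ``below $2a-2b+1=0$'' regime --- confirming that the other branch of the hyperbola misses $\calF$ altogether, which requires checking all four sides rather than only the one through the corner.
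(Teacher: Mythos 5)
Your proposal is correct, and it follows the approach the paper implicitly intends: the paper states Lemma~\ref{R1R2} (like the other pairwise lemmas of the appendix) without proof, treating it as a routine computation within the hyperbola framework of Lemma~\ref{DisDifIsoClaCha}, and your write-up supplies exactly that verification. I checked the key identities --- the expression for $D$, the translated form $-u^2+2uv+v^2+(a+b+1)u+(a+1-b)v=0$, the side-intersections yielding $y=b-a-\tfrac12$ on $x=\tfrac a2$, the threshold curves $2a-2b+1=0$, $a-b+1=0$ and $3a^2+(2b+2)a-2b+1=0$, the reduction of the vertical-tangent condition to $(a-b+1)^2<0$, and the sign/slope determinations --- and they all hold; the tangent direction at the corner $(u,v)=(0,0)$ (negative slope for $b<a+1$, entering $\calF$ for $b>a+1$) also confirms your claim about when the corner is an isolated point versus the right endpoint of the arc.
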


Note $a-b+1=0$ is where $x=\frac{a}{2}$ is tangent to $d_{R,1}=d_{R,2}$ at $(\frac{a}{2},\frac{1}{2})$.

\begin{lemma}
\label{R1U0} The hyperbola $d_{R,1}=d_{U,0}$ has asymptotes with
slopes $-1\pm\sqrt{2}$.

On $(\sqrt{2}+1)a-b-1=0$ (which includes equivalence classes 14 and 20), the hyperbola
$d_{R,1}=d_{U,0}$ is degenerate and the point of intersection of
its two lines is on $x=0$.
On equivalence class 14, $d_{R,1}=d_{U,0}$ is two line segments,
each meeting $x=0$ at the same point with $0 < y < \frac{1}{2}$.
The other endpoints are on $y=0$ and $y=\frac{1}{2}$ with $0 < x <
\frac{a}{2}$.  On equivalence class 20, $d_{R,1}=d_{U,0}$ is the
line segment with endpoints $(0,\frac{1}{2})$ and
$(\frac{\sqrt{2}-1}{2},0)$. In both cases, to the right of the
segment or segments, $d_{R,1}$ is smaller and to the left,
$d_{U,0}$ is.

To the right of $(\sqrt{2}+1)a-b-1=0$ (which includes equivalence
class 15),
 only the component of the hyperbola $d_{R,1}=d_{U,0}$ to the right
of the point of intersection of the asymptotes passes through
$\calF$ - one could say it is concave right. The lower endpoint of
this arc is on $y=0$ with $0 < x < \frac{a}{2}$ and its upper
endpoint is on $x=\frac{a}{2}$ with $0 < y \leq \frac{1}{2}$ or on
$y=\frac{1}{2}$ with $0 < x \leq \frac{a}{2}$. To the right of
this arc, $d_{R,1}$ is smaller and to the left, $d_{U,0}$ is.

To the left of $(\sqrt{2}+1)a-b-1=0$ (which includes equivalence
classes 12, 13 and 17), the two components of the hyperbola
$d_{R,1}=d_{U,0}$ are above and below the point of intersection of
their asymptotes and are hence concave up and down, respectively.
In $\calF$, when there are two arcs, $d_{R,1}$ is smaller between
them  and $d_{U,0}$ is smaller on the other sides of the arcs.
When there is just the lower arc, $d_{R,1}$ is smaller above it
and $d_{U,0}$ below.

Assume $(a,b)$ is to the right of $(b^2+1)a^2+2b^3a-2b^3-b^2 =0$
with $b > 1$. To the left of, and on, $a^2+(2b+2)a-4b=0$,
$d_{R,1}=d_{U,0}$ is a single arc which is concave down and has
negative slopes. The upper endpoint is on $x=0$ with $0 < y <
\frac{1}{2}$ to the left of $a^2+(2b+2)a-4b=0$ and is
$(0,\frac{1}{2})$ on $a^2+(2b+2)a-4b=0$. The lower endpoint is on
$y=0$ with $0 < x < \frac{a}{2}$. To the right of the arc,
$d_{R,1}$ is smaller and to the left, $d_{U,0}$ is. The only
difference to the right of $a^2+(2b+2)a-4b=0$, is that the upper
endpoint is on $y=\frac{1}{2}$ with $0 < x < \frac{a}{2}$ and the
arc is not necessarily concave down.
\end{lemma}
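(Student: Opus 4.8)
\medskip
\noindent\emph{Proof sketch.}
The plan is to rewrite $d_{R,1}=d_{U,0}$ in a normal form that makes every assertion transparent. From Table~\ref{10distances}, and using that the quadratic part of $d_{R,1}$ equals $2(x+y)^2$,
\[
d_{R,1}-d_{U,0}=2(x+y)^2-4x^2-2b(x+y)+\frac{a^2}{2}+(b+1)a-b-\frac12 .
\]
Completing the square in $x+y$ shows the locus $d_{R,1}=d_{U,0}$ is
\[
2\left(x+y-\frac b2\right)^2-4x^2=K(a,b),\qquad 2K(a,b)=b^2-a^2-2ab-2a+2b+1 .
\]
Hence the conic is a (possibly degenerate) hyperbola with centre $(0,\frac b2)$ --- always on the line $x=0$, so every degeneracy is centred there --- and with asymptotes $y=\frac b2+(-1\pm\sqrt2)\,x$; this gives the asymptote-slope claim at once. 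I will also use the identity $K(a,b)+a^2=\frac12(a-b-1)^2\geq0$.

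First I would handle the degenerate locus $K=0$. Viewing $2K=0$ as a quadratic in $b$, its discriminant is $8a^2$, so it factors as $\left(b-(\sqrt2+1)a+1\right)\left(b-(1-\sqrt2)a+1\right)=0$; the second factor never vanishes on the $ab$-plank, so within the plank $K=0$ is exactly $(\sqrt2+1)a-b-1=0$. There the conic is its two asymptote lines, which cross at $(0,\frac b2)\in\{x=0\}$; to obtain the segments cut from $\calF$ for equivalence classes $14$ and $20$ I would intersect each line with the four sides $x=0$, $x=\frac a2$, $y=0$, $y=\frac12$ and keep the crossings lying in the allowed ranges.

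For the non-degenerate regimes I would split on the sign of $K$, which is constant off the degenerate line: evaluating $K$ at one point on each side of $(\sqrt2+1)a-b-1=0$ shows $K>0$ to its ``left'' (and wherever that line has left the plank) and $K<0$ in the bounded region to its ``right''. When $K<0$ the curve $4x^2-2(x+y-\frac b2)^2=-K>0$ has transverse axis along $x$, so one branch lies entirely in $x<0$ and misses $\calF\subseteq\{x\geq0\}$; only the ``concave right'' branch survives, as stated. When $K>0$ the curve $2(x+y-\frac b2)^2-4x^2=K>0$ has two branches; solving for $y$, the upper one $y=\frac b2-x+\sqrt{\frac K2+2x^2}$ satisfies $y''=K\left(\frac K2+2x^2\right)^{-3/2}>0$ (concave up) and the lower one $y=\frac b2-x-\sqrt{\frac K2+2x^2}$ satisfies $y'<-1<0$ (concave down, negative slopes), matching the statement; which parts enter $\calF$ is governed by the location of the centre $(0,\frac b2)$ (on the edge $x=0$ when $b<1$, above $\calF$ when $b>1$). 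For the endpoint data I would substitute each side of $\calF$ into the normal form and solve the resulting quadratic or linear equation; the curves named in the lemma are precisely the loci where a root hits a corner of $\calF$ or where a discriminant vanishes. For example, substituting the corner $(0,\frac12)$ gives $(1-b)^2=2K$, i.e.\ $a^2+(2b+2)a-4b=0$, the curve on which the upper endpoint becomes $(0,\frac12)$; substituting $x=\frac a2$ and using the identity above shows the conic meets $x=\frac a2$ only at $y=-\frac12$ and at $y=b-a+\frac12$, the latter lying in $[0,\frac12]$ exactly when $a-\frac12\leq b\leq a$; and the quartic $(b^2+1)a^2+2b^3a-2b^3-b^2=0$ enters as a tangency/containment condition in the $b>1$ regime. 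In every region the side on which each of $d_{R,1},d_{U,0}$ is smaller is then read off from the sign of $d_{R,1}-d_{U,0}$ at a single test point.

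The main obstacle I anticipate is not any one computation but the case bookkeeping, together with the need to prove the named curves are the \emph{only} transitions. There are many sub-regions --- left of, on, and right of the degenerate line; $b<1$ versus $b>1$; and left/right of the quartic and of $a^2+(2b+2)a-4b=0$ --- and for each I must pin down which branch(es) meet $\calF$, through which sides, with which concavity and slope signs, and in which order the endpoints occur. Making this rigorous amounts to monotonicity or sign-constancy statements for the relevant roots and discriminants over the pertinent $(a,b)$-ranges, plus careful handling near the degenerate line where the hyperbola language breaks down; everything else is a mechanical unwinding of the normal form $2(x+y-\frac b2)^2-4x^2=K(a,b)$.
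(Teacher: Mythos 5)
Your normal form is correct and is genuinely the right organizing device here. I checked the key computations: $d_{R,1}-d_{U,0}=2(x+y-\tfrac b2)^2-4x^2-K$ with $2K=b^2-a^2-2ab-2a+2b+1$; the centre $(0,\tfrac b2)$ and asymptote slopes $-1\pm\sqrt2$; the identity $K+a^2=\tfrac12(a-b-1)^2$ (which makes the intersection with $x=\tfrac a2$ come out as the two clean roots $y=-\tfrac12$ and $y=b-a+\tfrac12$); the factorization of $2K=0$ into $\bigl(b-(\sqrt2+1)a+1\bigr)\bigl(b-(1-\sqrt2)a+1\bigr)$ with only the first factor meeting the plank; the corner condition $(1-b)^2=2K\Leftrightarrow a^2+(2b+2)a-4b=0$; and the concavity/branch analysis by the sign of $K$. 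All of these are right, and the sign statements ($d_{R,1}$ smaller between the two branches when $K>0$, etc.) follow immediately since $d_{R,1}-d_{U,0}=-K$ at the centre. For what it is worth, the paper states this lemma with no proof at all (it is one of the unproved computational lemmas of the appendix), so your derivation is not merely ``a different route'' --- it supplies structure the paper leaves implicit, and it is cleaner than brute-force case checking.

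That said, what you have is a sketch, and you say so: the actual content of the lemma is the regime-by-regime endpoint data (which sides of $\calF$ each arc enters and leaves, with strict inequalities on the coordinates), and that bookkeeping is deferred. One specific correction: the quartic $(b^2+1)a^2+2b^3a-2b^3-b^2=0$ is \emph{not} a tangency or containment condition for the conic $d_{R,1}=d_{U,0}$; as the paper notes right after the lemma, it is where $d_{R,0}$, $d_{R,1}$, $d_{U,0}$ have a triple intersection on $x=0$, and in the final paragraph of the lemma it serves only as a hypothesis restricting the $(a,b)$ under consideration. So in your plan it should not appear among the discriminant/corner conditions extracted from the normal form; the genuine transition curves for this pair are $(\sqrt2+1)a-b-1=0$, $a^2+(2b+2)a-4b=0$, and the corner/side conditions you would obtain by substituting $x=0$, $y=0$, $y=\tfrac12$, $x=\tfrac a2$. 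With that adjustment, completing your program is a finite (if tedious) sign-and-root analysis of the explicit expressions $y=\tfrac b2\pm\sqrt{K/2}$ on $x=0$, the roots on $y=0$ and $y=\tfrac12$, and the single relevant root $y=b-a+\tfrac12$ on $x=\tfrac a2$, over each of the named $(a,b)$-regions.
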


Note  $(b^2+1)a^2+2b^3a-2b^3-b^2 =0$ is where $d_{R,0}$, $d_{R,1}$,
$d_{U,0}$ have a triple intersection on $x=0$ for $b\geq \frac{2}{3}$.
Other aspects of where $d_{R,1}$ and $d_{U,0}$
are each smaller than the other will be described as needed.

\begin{lemma}
\label{R1U1} For $b \leq 1$, $d_{R,1}=d_{U,1}$ consists of the
line segments $x=0$ and $y=\frac{b}{2}$. Below $y=\frac{b}{2}$,
$d_{R,1}$ is smaller and above it, $d_{U,1}$ is. For $b
> 1$, $d_{R,1}=d_{U,1}$ is just $x=0$ and $d_{R,1} < d_{U,1}$ in
the interior of $\calF$.\end{lemma}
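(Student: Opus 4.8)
The plan is to compute the difference $d_{R,1}-d_{U,1}$ directly from Table~\ref{10distances} and factor it. Reading off the two rows, the purely quadratic terms ($2x^2+2y^2$), the $-2by$ term, and the entire constant $\tfrac12 a^2+(b+1)a+b^2+b+\tfrac12$ all agree, while the $xy$-term and the $x$-term appear with opposite signs, so
$$d_{R,1}-d_{U,1} = (4xy-(-4xy)) + (-2bx-2bx) = 8xy-4bx = 4x(2y-b).$$
Hence $d_{R,1}=d_{U,1}$ precisely on the zero set of $x(2y-b)$, i.e.\ on the union of the line $x=0$ and the line $y=\frac{b}{2}$, extended to the whole $xy$-plane.

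Next I would intersect this zero set with the fundamental region $\calF=\{0\le x\le \frac{a}{2},\ 0\le y\le \frac12\}$. The line $x=0$ is always a side of $\calF$. The line $y=\frac b2$ meets $\calF$ in a full horizontal segment exactly when $\frac b2\le\frac12$, that is, when $b\le 1$; when $b>1$ it misses $\calF$ entirely, since every point of $\calF$ has $y\le\frac12<\frac b2$. This is exactly the case split in the statement: for $b\le1$ the intersection is the two segments $x=0$ and $y=\frac b2$, and for $b>1$ it is just $x=0$.

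Finally, for the sign assertions I would use that $x\ge0$ on $\calF$, so on the interior (where $x>0$) the sign of $d_{R,1}-d_{U,1}=4x(2y-b)$ equals the sign of $2y-b$. Thus $d_{R,1}<d_{U,1}$ where $y<\frac b2$ and $d_{R,1}>d_{U,1}$ where $y>\frac b2$, which gives the ``below/above'' description when $b\le1$; and when $b>1$ we have $2y-b\le 2\cdot\tfrac12-b<0$ throughout $\calF$, so $d_{R,1}<d_{U,1}$ on the interior, with equality only on $x=0$.

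I do not expect a real obstacle here: the argument is a one-line factorization followed by a case split on whether $b\le1$. The only points requiring care are to keep the defining inequalities of $\calF$ straight when deciding whether the horizontal line actually lies inside $\calF$, and to restrict the sign analysis to $x\ge0$ rather than to the whole $xy$-plane (where the factor $x$ would change sign).
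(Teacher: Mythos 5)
Your proposal is correct and is essentially the intended argument: the paper treats this as one of the direct computational lemmas (the difference $d_{R,1}-d_{U,1}=4x(2y-b)$ is a degenerate hyperbola, consistent with the paper's later remark that $d_{D,1}-d_{U,1}=4b(y-x)$ and with the note that at $b=1$ the $y=\frac{b}{2}$ component coincides with $y=\frac{1}{2}$). The factorization, the case split on whether $\frac{b}{2}\leq\frac{1}{2}$, and the sign analysis restricted to $x\geq 0$ are exactly what is needed.
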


Note $b=1$ is where a subset of $d_{R,1}=d_{U,1}$ coincides
with $y=\frac{1}{2}$.

\begin{lemma}
\label{R1U2}
For all $(a,b)$, $d_{R,1}=d_{U,2}$ at $(\frac{a}{2},\frac{1}{2})$.
For $b \geq 1$ and on or below $a-2b+2=0$, $d_{R,1}=d_{U,2}$ at no other point
or just $(0,\frac{1}{2})$
and
$d_{R,1} < d_{U,2}$ in the interior of $\calF$.
Above $a-2b+2=0$, $d_{R,1}=d_{U,2}$ is the union of $(\frac{a}{2},\frac{1}{2})$ and a single arc with left endpoint on $x=0$
with $0 < y < \frac{1}{2}$. Below $a-b+1=0$, the right endpoint is on $y=\frac{1}{2}$ with $0 < x < \frac{a}{2}$
and on or above $a-b+1=0$ the right endpoint is $(\frac{a}{2},\frac{1}{2})$.
Below the arc, $d_{R,1}$ is smaller and above it, $d_{U,2}$ is.
\end{lemma}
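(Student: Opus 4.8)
The plan is to reduce the entire statement to the sign behaviour of the single polynomial
\[
g := d_{R,1}-d_{U,2}=2x^{2}+4xy-2y^{2}-2bx+(2b+4)y-\frac{1}{2}a^{2}+(b-1)a-b-\frac{3}{2},
\]
read off from Table~\ref{10distances}; the universal claim is then immediate, since $(x,y)=(\frac{a}{2},\frac{1}{2})$ makes $g=0$. For the remainder fix $0<a\le 1$ and $b\ge 1$, and begin with three structural facts. (i) The quadratic part $2x^{2}+4xy-2y^{2}$ has discriminant $4^{2}-4\cdot 2\cdot(-2)=32>0$, and the $3\times 3$ matrix of the conic $g=0$ has determinant $4(a^{2}-2(b-1)a-b^{2}-2b+1)$, which as a quadratic in $a$ has roots $(b-1)\pm b\sqrt{2}$, both outside $(0,1]$ once $b\ge 1$; so for every admissible $(a,b)$ the locus $g=0$ is a genuine non-degenerate hyperbola with two convex branches, and this pair never produces the degenerate situations that complicate lemmas such as Lemma~\ref{R1U0}. (ii) $\nabla g=(4x+4y-2b,\ 4x-4y+2b+4)$ vanishes only on the line $x=-\frac{1}{2}$, so $g$ has no critical point, and the hyperbola no centre, in $\calF$; hence $g$ attains its maximum over $\overline{\calF}$ on $\partial\calF$, and the part of $g=0$ inside $\calF$ is a smooth $1$-manifold. (iii) A hyperbola has no bounded component, so that $1$-manifold is a finite disjoint union of open arcs, each having both endpoints on $\partial\calF$.

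Next I would restrict $g$ to the four sides of $\calF$, where it factors: $g|_{y=1/2}=2(x-\frac{a}{2})(x-(b-1-\frac{a}{2}))$, $g|_{x=a/2}=-2(y-\frac{1}{2})(y-(a+b+\frac{3}{2}))$, $g|_{y=0}=2(x-x_{-})(x-x_{+})$ and $g|_{x=0}=-2(y-y_{-})(y-y_{+})$, where $x_{\pm}=\frac{1}{2}(b\pm\sqrt{(a-b)^{2}+2a+2b+3})$ and $y_{\pm}=\frac{1}{2}((b+2)\pm\sqrt{(b+1)^{2}+2ab-a^{2}-2a})$. Using $0<a\le 1\le b$ one checks $x_{+}>\frac{a}{2}$, $x_{-}<0$ (since $2ab\le 2b<a^{2}+2a+2b+3$), $a+b+\frac{3}{2}>\frac{1}{2}$, and that $g|_{x=0}$ is increasing on $[0,\frac{1}{2}]$ (its vertex is at $y=\frac{b+2}{2}>\frac{1}{2}$). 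Consequently $g<0$ on the open sides $y=0$ and $x=\frac{a}{2}$; on $x=0$, $g$ rises monotonically from $g(0,0)<0$, so it has a single zero $y_{-}\in(0,\frac{1}{2})$ exactly when $g(0,\frac{1}{2})>0$, and $g(0,\frac{1}{2})=a(b-1-\frac{a}{2})=-\frac{a}{2}(a-2b+2)$, which also shows that $a-2b+2=0$ is where $d_{R,1}=d_{U,2}$ at $(0,\frac{1}{2})$; on $y=\frac{1}{2}$ the root $b-1-\frac{a}{2}$ lies strictly in $(0,\frac{a}{2})$ exactly when $a<2b-2$ and $a>b-1$, and its coincidence with $\frac{a}{2}$ --- the double root making $y=\frac{1}{2}$ tangent to $g=0$ at $(\frac{a}{2},\frac{1}{2})$ --- is precisely $a-b+1=0$. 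The four corner values are $g(0,0)<0$, $g(\frac{a}{2},0)=-a-b-\frac{3}{2}<0$, $g(\frac{a}{2},\frac{1}{2})=0$, and $g(0,\frac{1}{2})$ positive or negative according as $a$ is below or above $2b-2$.

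Assembling: if $a\ge 2b-2$ (on or below $a-2b+2=0$), then $g\le 0$ everywhere on $\partial\calF$, vanishing only at $(\frac{a}{2},\frac{1}{2})$ and, when $a=2b-2$, also at $(0,\frac{1}{2})$; by (ii), $g\le 0$ on all of $\overline{\calF}$, and an interior zero of $g$ would be an interior local maximum, forcing $\nabla g=0$ there --- impossible. Hence $d_{R,1}<d_{U,2}$ throughout the interior of $\calF$, the first assertion. (The same edge analysis, run with $b<1$, gives $g<0$ in the interior there too, so restricting the detailed part of the statement to $b\ge 1$ costs nothing.) If $a<2b-2$, the only zeros of $g$ on $\partial\calF$ are $(0,y_{-})$, the corner $(\frac{a}{2},\frac{1}{2})$, and --- when $a>b-1$ --- also $(b-1-\frac{a}{2},\frac{1}{2})$; $g$ changes sign there transversally exactly at $(0,y_{-})$ and at $(b-1-\frac{a}{2},\frac{1}{2})$ when $a>b-1$, or at $(0,y_{-})$ and the corner when $a\le b-1$. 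So by (iii) there is exactly one interior arc of $g=0$, joining those two points; this is the single arc of the statement, with left endpoint on $x=0$ ($0<y<\frac{1}{2}$) and the stated right endpoint, and it splits $\calF$ into the piece containing $(0,\frac{1}{2})$, where $g>0$ (so $d_{U,2}$ is smaller, ``above the arc''), and the rest, where $g<0$ (so $d_{R,1}$ is smaller, ``below the arc''). When $a>b-1$, the corner $(\frac{a}{2},\frac{1}{2})$ is a separate zero: the tangent line to $g=0$ there is perpendicular to $\nabla g(\frac{a}{2},\frac{1}{2})=(2(a+1-b),\ 2(a+b+1))$, whose components are both positive, so it runs out of $\calF$ in both directions, and $(\frac{a}{2},\frac{1}{2})$ is isolated in $(g=0)\cap\overline{\calF}$ --- the ``union with $(\frac{a}{2},\frac{1}{2})$'' of the statement.

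The step that needs real care is the arc-counting just used: that the part of $g=0$ in $\calF$ really is exactly one arc (plus, when $a>b-1$, the isolated corner) and that the second branch of the hyperbola contributes nothing. The ingredients are all in place --- no bounded component and the smoothness of $g=0$ inside $\calF$ from (ii)--(iii), the exhaustive list of boundary zeros and their transversality from the edge analysis, and the convexity and centre-location of (i)--(ii), which together keep the second branch away from $\calF$ --- but turning this into a clean argument valid uniformly over $\{0<a\le 1,\ b\ge 1\}$, and consistently reading off which side of the arc each distance function wins, is the bookkeeping burden. It is the same routine carried out in Lemmas~\ref{R0R1}--\ref{R1U1}, lightened here by the fact, from (i), that $g=0$ is never degenerate on this region.
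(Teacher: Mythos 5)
Your proof is correct. Note that the paper itself states Lemma~\ref{R1U2} (like the other pairwise lemmas in Section~\ref{DistFcnPair}) with no proof at all, only the remark after it identifying $a-2b+2=0$ and $a-b+1=0$ as the loci where $d_{R,1}=d_{U,2}$ hits $(0,\frac{1}{2})$ and is tangent to $y=\frac{1}{2}$ at $(\frac{a}{2},\frac{1}{2})$; your edge computations recover exactly those two facts, and your factorizations of $g$ on the four sides, the corner values, and the determinant/gradient computations all check out. The one step you flag as a ``bookkeeping burden''--- that the interior zero set is exactly one arc --- does close with the ingredients you already have, and it is worth saying so explicitly: every component of $(g=0)\cap\overline{\calF}$ either is a loop (excluded, since an interior loop would enclose an extremum of $g$ and hence a critical point, which by (ii) lies at $x=-\frac{1}{2}$) or is an arc whose two endpoints are boundary zeros of $g$; you have enumerated all boundary zeros, each transversal one is a regular point of $g$ and so is the endpoint of exactly one arc, and the corner (for $a>b-1$) admits no inward tangent direction and so is the endpoint of none. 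Counting arc-ends then forces exactly one arc, with no separate handling of the second hyperbola branch needed (it cannot enter $\calF$ without creating boundary zeros you have shown do not exist). Two small cautions: the gradient of $g$ vanishes at the single point $(-\frac{1}{2},\frac{b+1}{2})$, not on a whole line (harmless, since only $x=-\frac{1}{2}<0$ matters); and on the locus $a=b-1$ the tangent to $g=0$ at the corner is horizontal and the curve dips into $\calF$, so there the arc genuinely terminates at $(\frac{a}{2},\frac{1}{2})$ rather than that point being isolated --- which is consistent with the lemma's ``on or above $a-b+1=0$'' clause and with your case split.
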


Note $d_{R,1}=d_{U,2}$ at $(0,\frac{1}{2})$ on $a-2b+2=0$ and is tangent to $y=\frac{1}{2}$
at $(\frac{a}{2},\frac{1}{2})$ on $a-b+1=0$.

From Lemma~\ref{R1U1}, it suffices to consider $d_{U,1}$ where $b<
1$.

\begin{lemma}
\label{U0U1} Assume $b < 1$. On $a=1$, $d_{U,0}=d_{U,1}$ does not
pass through the interior of $\calF$ and $d_{U,0}< d_{U,1}$ on the
interior of $\calF$.

Between $(\sqrt{2}+1)a-b-1 = 0$  and $a=1$ (which includes part of
equivalence class 15), $d_{U,0}=d_{U,1}$ is a single arc. One
endpoint is on $y=\frac{1}{2}$ with $0 < x < \frac{a}{2}$ and the
other is on $x=\frac{a}{2}$ with $0 < y < \frac{1}{2}$. To the
right of the arc, $d_{U,1}$ is smaller and to the left, $d_{U,0}$
is.

On $(\sqrt{2}+1)a-b-1 = 0$ (which includes equivalence class 14
and part of 15), $d_{U,0}=d_{U,1}$ consists of two line segments
that meet $x=0$ with $0 < y < \frac{1}{2}$ at the same point. The
right endpoint of one line segment is on $y=\frac{1}{2}$ with $0 <
x < \frac{a}{2}$ and the right endpoint of the other is on
$x=\frac{a}{2}$ with $0 < y < \frac{1}{2}$. Between the line
segments, $d_{U,1}$ is smaller and on the other sides of the
segments,  $d_{U,0}$ is.

Between $a^2+(2b+2)a-4b = 0$ and $(\sqrt{2}+1)a-b-1 = 0$ (which
includes equivalence classes 12 and 13 and part of 15),
$d_{U,0}=d_{U,1}$ is two arcs. They have distinct left endpoints on $x=0$
with $0 < y < \frac{1}{2}$. One has a right endpoint on
$y=\frac{1}{2}$ with $0 < x < \frac{a}{2}$. The other has a right
endpoint on $x=\frac{a}{2}$ with $0 < y < \frac{1}{2}$. Between
the arcs, $d_{U,1}$ is smaller and on the other sides of the arcs,
$d_{U,0}$ is.

Above $a^2+(2b+2)a-4b = 0$ (which includes equivalence class 17),
$d_{U,0}=d_{U,1}$ is a single arc, concave down, with endpoints on
$x=0$ and $x=\frac{a}{2}$ and not meeting $y=0$ or
$y=\frac{1}{2}$. Below the arc, $d_{U,0}$ is smaller and above it,
$d_{U,1}$ is.

\end{lemma}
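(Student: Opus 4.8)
The plan is to work with the single polynomial $g(a,b,x,y) := d_{U,0}(x,y) - d_{U,1}(x,y)$. From Table~\ref{10distances}, $g = 2x^2 + 4xy - 2y^2 - 2bx + 2by + b + \frac{1}{2} - \frac{1}{2}a^2 - (b+1)a$. For fixed $(a,b)$ this is a conic in $(x,y)$ whose quadratic part $2x^2 + 4xy - 2y^2$ has $\delta^2 - 4\gamma\phi = 4^2 - 4\cdot 2\cdot(-2) = 32 > 0$ (as anticipated in Lemma~\ref{DisDifIsoClaCha}), so the locus $g = 0$ is a hyperbola, possibly degenerate. Two facts drive everything. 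First, $\partial_x g = \partial_y g = 0$ forces $(x,y) = (0,\frac{b}{2})$, so \emph{the center does not depend on $a$}, and since $0 < b < 1$ it always lies in the relative interior of the left edge $x = 0$ of $\calF$. Second, the asymptotic directions satisfy $m^2 - 2m - 1 = 0$, so the asymptote slopes are always $1 \pm \sqrt{2}$. Translating to the center, $g = 2x^2 + 4x\bigl(y - \frac{b}{2}\bigr) - 2\bigl(y - \frac{b}{2}\bigr)^2 + \frac{1}{2}\bigl[(b+1)^2 - 2(b+1)a - a^2\bigr]$.

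First I would dispose of the degenerate case. A central conic is degenerate exactly when its value at the center vanishes, i.e.\ $(b+1)^2 - 2(b+1)a - a^2 = 0$, which for $a > 0$ is $a = (b+1)(\sqrt{2}-1)$, equivalently $(\sqrt{2}+1)a - b - 1 = 0$. On this curve $g = -2\bigl(y - \frac{b}{2} - (1+\sqrt{2})x\bigr)\bigl(y - \frac{b}{2} - (1-\sqrt{2})x\bigr)$, the two asymptote lines through $(0,\frac{b}{2})$. Substituting $a = (b+1)(\sqrt{2}-1)$ one checks that the line of slope $1+\sqrt{2}$ leaves $\calF$ through $y = \frac{1}{2}$ with $0 < x < \frac{a}{2}$, and that, on the portion of $(\sqrt{2}+1)a - b - 1 = 0$ that is actually relevant (the part lying in or bordering equivalence classes 14 and 15, where $b$ is bounded away from $0$), the line of slope $1-\sqrt{2}$ leaves through $x = \frac{a}{2}$ with $0 < y < \frac{1}{2}$. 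Evaluating $g$ at any point with $y = \frac{b}{2}$ and $x > 0$ gives $g = 2x^2 > 0$, so $d_{U,1}$ is smaller in the wedge between the lines and $d_{U,0}$ outside it. The sign of $(b+1)^2 - 2(b+1)a - a^2$ also tells us, in the non-degenerate cases, which of the two pairs of opposite sectors cut out by the asymptotic cone at $(0,\frac{b}{2})$ the branches occupy: for $(\sqrt{2}+1)a - b - 1 < 0$ they lie in the sectors containing the $\pm y$ directions from the center, and for $(\sqrt{2}+1)a - b - 1 > 0$ in those containing the $\pm x$ directions.

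For the non-degenerate regions I would trace the hyperbola across the boundary of $\calF$ by substituting each of $x = 0$, $x = \frac{a}{2}$, $y = 0$, $y = \frac{1}{2}$ into $g$ and counting roots of the resulting one-variable quadratic inside the relevant interval, using the region's defining inequalities. Along $x = 0$, $g(0,y) = -2\bigl(y - \frac{b}{2}\bigr)^2 + \frac{1}{2}\bigl[(b+1)^2 - 2(b+1)a - a^2\bigr]$, which has two roots symmetric about $y = \frac{b}{2}$ when $(\sqrt{2}+1)a - b - 1 < 0$ and none otherwise; the upper root lies below $y = \frac{1}{2}$ exactly when $a^2 + (2b+2)a - 4b > 0$. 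More slickly, the corner values $g(0,\frac{1}{2}) = -\frac{1}{2}\bigl(a^2 + (2b+2)a - 4b\bigr)$ and $g(\frac{a}{2},\frac{1}{2}) = 2b(1-a)$ show that the hyperbola passes through $(0,\frac{1}{2})$ exactly on $a^2 + (2b+2)a - 4b = 0$ and through $(\frac{a}{2},\frac{1}{2})$ exactly on $a = 1$ --- which is why these are the transition curves: crossing the first moves an endpoint through $(0,\frac{1}{2})$, fusing the two-arc picture into the single concave-down arc, while approaching $a = 1$ collapses the single arc onto $(\frac{a}{2},\frac{1}{2})$, after which the hyperbola no longer meets the interior of $\calF$ and $g < 0$ there, so $d_{U,0} < d_{U,1}$. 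The concavity of each branch of a hyperbola is constant, so a concavity claim follows once one knows which branch enters $\calF$, which I would confirm by comparing the arc with its chord at one abscissa. Finally, since the sign of $g$ is constant on each connected component of the complement of the hyperbola in $\calF$, I would label the components by evaluating $g$ at one convenient point of each; the corner values above, together with $g(\frac{a}{2},0) = b + \frac{1}{2} - a(2b+1)$ and $g(0,0) = b + \frac{1}{2} - \frac{1}{2}a^2 - (b+1)a$, suffice.

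The main obstacle is the bookkeeping in this last step: across the four sub-regions one must say correctly which branch of the hyperbola enters $\calF$ and through which pair of sides, and this is delicate exactly because the center sits on the edge $x = 0$, so both branches hug that edge and small perturbations of $(a,b)$ push the endpoints through the corners $(0,\frac{1}{2})$ and $(\frac{a}{2},\frac{1}{2})$. One must also be careful that several of the assertions (``the other line leaves through $x = \frac{a}{2}$'', ``two arcs'', ``a single concave-down arc'') hold only on the relevant portions of the indicated curves and regions --- those flagged by the parenthetical equivalence-class references --- since for small $b$ the picture is genuinely different; matching these portions to the classes recorded in Table~\ref{Naive} is where the care lies.
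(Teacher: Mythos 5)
The paper states this lemma without proof (all the pairwise lemmas in the appendix section on distance functions in pairs are asserted as computations), so there is no argument of the authors' to compare against; your proposal supplies the computation they presumably had in mind, and it is the right one. I checked your formulas: $g=d_{U,0}-d_{U,1}$ is as you write it, its center is indeed $(0,\frac{b}{2})$ independently of $a$, the asymptote slopes are $1\pm\sqrt{2}$, the value at the center is $\frac{1}{2}\bigl[(b+1)^2-2(b+1)a-a^2\bigr]$ so degeneracy occurs exactly on $(\sqrt{2}+1)a-b-1=0$, and the corner values $g(0,\frac{1}{2})=-\frac{1}{2}(a^2+(2b+2)a-4b)$, $g(\frac{a}{2},\frac{1}{2})=2b(1-a)$, $g(\frac{a}{2},0)=b+\frac{1}{2}-a(2b+1)$, $g(0,0)=b+\frac{1}{2}-\frac{1}{2}a^2-(b+1)a$ are all correct, as is the rule that the sign of the center value selects which pair of sectors the branches occupy. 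This correctly identifies $a^2+(2b+2)a-4b=0$ and $a=1$ as the transition curves and reduces everything else to one-variable root counting on the four sides plus a sign evaluation per component, which is routine and would close the argument. What you leave as ``bookkeeping'' is genuinely the only remaining work, and your caution about it is not merely prudence: the lemma as literally stated fails for small $b$. For instance at $(a,b)=(0.45,0.05)$, which satisfies $(\sqrt{2}+1)a-b-1>0$ and $a<1$, one finds $g(\frac{a}{2},0)=0.055>0$ and $g$ positive on all of $x=\frac{a}{2}$, while $g(0,0)<0$, so the single arc runs from a point of $y=0$ to a point of $y=\frac{1}{2}$ rather than ending on $x=\frac{a}{2}$ as claimed; similarly, in the strip between $a^2+(2b+2)a-4b=0$ and $(\sqrt{2}+1)a-b-1=0$ at small $b$ the lower branch misses $\calF$ entirely and there is only one arc. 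So the statements must be read as restricted to the portions of the named regions adjacent to equivalence classes 12--17 (where $b$ is near $1$ and $g(0,0)<0$, $g(\frac{a}{2},0)<0$ hold), which is how the paper uses them. Your proposal, with that restriction made explicit and the four boundary root counts written out under the defining inequalities of each sub-region, is a correct and complete proof.
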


Note $a=1$ is where $d_{U,0}=d_{U,1}$ at
$(\frac{a}{2},\frac{1}{2})$ and $a^2+(2b+2)a-4b=0$ is where
$d_{U,0}=d_{U,1}$ at $(0,\frac{1}{2})$ for $b\leq 1$.

\section{Eliminating distance functions}
\label{elim2}

In this section, we fix a distance function and then describe
subsets of the $ab$-plank for which there is no $(x,y)$, in the
associated fundamental region $\calF$, for which the given
distance function is smallest. When that is the case, we will say
that the given distance function is not smallest in that subset of
the $ab$-plank.

\begin{prop}
\label{R0R1U1First} Let $(a,b)$ satisfy $a+2b-2>0$ and $b < 1$ in
the $ab$-plank. For all $(a,b)$ above the arc of the ellipse
$a^2+(2b-2)a+3b^2-2b-1 =0$, the labeled $\calF$ for equivalence class 16 in Figure~\ref{TheFs1}  shows
where each of $d_{R,0}, d_{R,1}$ and $d_{U,1}$ is smaller than the
other two. The fundamental region for equivalence class 9 in
Figure~\ref{TheFs1} shows this for $(a,b)$ on or below that
arc.
\end{prop}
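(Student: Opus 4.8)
The plan is to describe, for $(a,b)$ in the stated region, exactly how the three comparison curves $d_{R,0}=d_{R,1}$, $d_{R,0}=d_{U,1}$, $d_{R,1}=d_{U,1}$ partition $\calF$ into the regions on which each of $d_{R,0},d_{R,1},d_{U,1}$ is smallest, and match that arrangement to the labeled $\calF$'s pictured for classes $16$ and $9$. First I would note that the hypotheses $a+2b-2>0$, $b<1$, $0<a\le 1$ force $1/2<b<1$ and $a+b-1>0$, so each of Lemmas~\ref{R0R1}, \ref{R0U1}, \ref{R1U1} applies in its ``between $a+2b-2=0$ and $b=1$'' regime. From Table~\ref{10distances}, $d_{R,1}-d_{U,1}=4x(2y-b)$; since $b<1$ this vanishes precisely along $x=0$ and along the horizontal segment $y=b/2$ interior to $\calF$, with $d_{R,1}<d_{U,1}$ when $x>0$, $y<b/2$. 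Because the whole edge $x=0$ lies in $\{d_{R,1}=d_{U,1}\}$, the two arcs $d_{R,0}=d_{R,1}$ and $d_{R,0}=d_{U,1}$ meet $x=0$ at one and the same point $(0,y_1)$, which is thus a triple intersection of the three functions; likewise, wherever $y=b/2$ meets one of these arcs it meets the other there too.

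Next I would set $g(y):=(d_{R,0}-d_{R,1})$ evaluated at $x=0$, namely $g(y)=2y^2+2by+\frac{1}{2}a^2+(b-1)a-b-\frac{1}{2}$. Its vertex is at $y=-b/2<0$, so $g$ is strictly increasing on $[0,\infty)$, and by Lemma~\ref{R0R1} its unique zero in $(0,\frac12)$ is the $y_1$ above. A direct computation gives $g(b/2)=\frac12 h(a,b)$ with $h(a,b):=a^2+(2b-2)a+3b^2-2b-1$ the stated ellipse polynomial, so $y_1<b/2\iff h(a,b)>0$ and $y_1\ge b/2\iff h(a,b)\le 0$. I would also observe that inside our triangle the curve $h=0$ is the single arc $a=1-b+\sqrt{2(1-b^2)}$ from $(1,\sqrt{2/3})$ to $(0,1)$ (the other root of $h$ in $a$ is negative there), and that ``above the arc'' is the side $h>0$, verified at one sample point. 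So the proposition reduces to: $h\le 0$ yields the class-$9$ picture and $h>0$ yields the class-$16$ picture.

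In the case $h\le 0$, so $y_1\ge b/2$: writing the arc $d_{R,0}=d_{R,1}$ as $y=\phi_1(x)$ (increasing, by Lemma~\ref{R0R1}), every point on its $d_{R,1}$-smaller side has $y>\phi_1(x)\ge\phi_1(0)=y_1\ge b/2$, hence there $d_{U,1}<d_{R,1}$; so $d_{R,1}$ is nowhere strictly smallest, and $\calF$ splits into only the $d_{R,0}$-region (below) and the $d_{U,1}$-region (above) along the single arc $d_{R,0}=d_{U,1}$ of Lemma~\ref{R0U1} — exactly the class-$9$ labeled $\calF$. In the case $h>0$, so $0<y_1<b/2<\frac12$: the line $y=b/2$ crosses $d_{R,0}=d_{R,1}$ at one interior point $(x^\ast,b/2)$, which by the observation above also lies on $d_{R,0}=d_{U,1}$, an interior triple intersection. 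I would then check that the sub-arc of $d_{R,0}=d_{R,1}$ with $y<b/2$, the sub-arc of $d_{R,0}=d_{U,1}$ with $y>b/2$, and the segment $\{y=b/2,\ 0\le x\le x^\ast\}$ together cut $\calF$ into three regions: the curvilinear triangle with corners $(0,y_1),(0,b/2),(x^\ast,b/2)$ on which $d_{R,1}$ is smallest (meeting $\partial\calF$ only along $x=0$), the region above it on which $d_{U,1}$ is smallest (meeting $x=0$ and $y=\frac12$), and the rest on which $d_{R,0}$ is smallest (meeting $x=0$, $y=0$, $x=\frac a2$) — while the leftover parts of the two arcs (the upper part of $d_{R,0}=d_{R,1}$, the lower part of $d_{R,0}=d_{U,1}$) run strictly through the interiors of the $d_{U,1}$- and $d_{R,1}$-regions respectively and so are not region borders. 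Each region is an interval at every height, hence connected; its incidence pattern matches the class-$16$ labeled $\calF$, furnishing the homeomorphism with properties (i)--(iii) of Section~\ref{equrel}. (Constancy of the picture within each of the two subsets also follows from Lemma~\ref{DisDifIsoClaCha} applied to the three pairs, the only change-curve being $h=0$.)

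The routine ingredients are the identities $d_{R,1}-d_{U,1}=4x(2y-b)$ and $g(b/2)=\frac12 h$ and the height-wise-interval connectivity checks. The real work — and the main obstacle — is the bookkeeping in the case $h>0$: sorting out which sub-arcs of the two parabola-like comparison curves genuinely bound the three ``smallest'' regions (rather than passing through the interior of one of them), locating the interior triple point $(x^\ast,b/2)$, and confirming that the resulting three-region configuration, with one interior and one boundary triple point, is homeomorphic in the required sense to the class-$16$ picture in Figure~\ref{TheFs1}.
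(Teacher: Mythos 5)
Your proposal is correct and follows essentially the same route as the paper: combine Lemmas~\ref{R0R1}, \ref{R0U1} and \ref{R1U1}, observe that everything hinges on whether the common intersection of $d_{R,0}=d_{R,1}$ and $d_{R,0}=d_{U,1}$ with $x=0$ lies above or below $(0,\frac{b}{2})$, and note that the dividing condition is exactly $a^2+(2b-2)a+3b^2-2b-1=0$. The only difference is that you establish the sign criterion by the explicit identity $g(b/2)=\frac{1}{2}h(a,b)$ together with monotonicity of $g$, where the paper simply tests sample points on either side of the arc; your version is a slight sharpening, not a different approach.
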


Note $a^2+(2b-2)a+3b^2-2b-1 =0$ is where $d_{R,0}, d_{R,1},
d_{U,1}$ have a triple intersection on $x=0$.

\begin{proof}
See Lemmas~\ref{R0R1}, \ref{R0U1} and \ref{R1U1} for where each of
$d_{R,0}$, $d_{R,1}$ and $d_{U,1}$ is smaller than another, in
pairs. If $d_{R,0}=d_{R,1}=d_{U,1}$ on $x=0$ then
$a^2+(2b-2)a+3b^2-2b-1=0$. We can test sample $(a,b)$ above
(respectively below)  the arc of $a^2+(2b-2)a+3b^2-2b-1=0$ in the
$ab$-plank and see that the $y=\frac{b}{2}$ subset of $d_{R,1}=d_{U,1}$ meets $x=0$ above
(respectively below) where $d_{R,0}=d_{R,1}$ does.
\end{proof}

\begin{cor}
The distance function $d_{R,1}$ is not smallest for $(a,b)$ below
the arc of the ellipse $a^2+(2b-2)a+3b^2-2b-1=0$ in the
$ab$-plank. \end{cor}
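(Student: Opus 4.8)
The plan is to combine Proposition~\ref{R0R1U1First} with the information in Table~\ref{Naive} about which distance functions can possibly be smallest in the relevant subset of the $ab$-plank. The region ``below the arc of the ellipse $a^2+(2b-2)a+3b^2-2b-1=0$'' (together with the standing hypotheses $a+2b-2>0$, $b<1$ of the proposition) is exactly the union of equivalence classes 7, 8 and 9, up to the boundary curves. From Table~\ref{Naive}, in equivalence classes 7--9 the only distance functions that can be smallest are $d_{R,0}$ and $d_{U,1}$; in particular $d_{R,1}$ is \emph{not} among them. So on the open interior of that region the corollary is immediate from the table.

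More directly, I would argue straight from Proposition~\ref{R0R1U1First}. That proposition says that for $(a,b)$ on or below the arc of $a^2+(2b-2)a+3b^2-2b-1=0$ (with $a+2b-2>0$, $b<1$), the labeled $\calF$ is the one for equivalence class~9 in Figure~\ref{TheFs1}, and that $\calF$ has no $d_{R,1}$-region: only $d_{R,0}$ and $d_{U,1}$ label its connected components. Hence there is no $(x,y)\in\calF$ at which $d_{R,1}$ is strictly smaller than all of $d_{R,0}, d_{R,1}, d_{U,1}$, so \emph{a fortiori} $d_{R,1}$ is not smallest among all six distance functions there. For the remaining part of the ``below the arc'' region not covered by the proposition's hypotheses --- namely $a+2b-2\le 0$ --- one invokes Lemma~\ref{R0U1}: below $a+b-1=0$ we have $d_{R,0}<d_{U,1}$ in the interior of $\calF$, and the earlier parts of Table~\ref{Naive} (equivalence classes 1, 2, 6, 7) again list $d_{R,1}$ as not possibly smallest. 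Since the whole region ``below the arc of $a^2+(2b-2)a+3b^2-2b-1=0$'' decomposes into these pieces, $d_{R,1}$ is not smallest anywhere in it.

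There is essentially no obstacle here: the corollary is a bookkeeping consequence of the proposition it immediately follows, read off from the explicit labeled $\calF$ for class~9 (which contains no $d_{R,1}$-region) together with the earlier elimination results recorded in Table~\ref{Naive}. The only point requiring a word of care is that Proposition~\ref{R0R1U1First} is stated under the hypotheses $a+2b-2>0$ and $b<1$, whereas the corollary asserts the conclusion for all $(a,b)$ below the relevant arc; one must check that the sub-region with $a+2b-2\le 0$ is also covered, which it is by the pairwise comparisons in Lemmas~\ref{R0R1}, \ref{R0U1}, \ref{R1U1} (and is already reflected in the first several rows of Table~\ref{Naive}). I would write the proof in two or three sentences, citing Proposition~\ref{R0R1U1First}, the class~9 labeled $\calF$, and Table~\ref{Naive}.
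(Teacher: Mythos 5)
Your core argument is exactly the paper's own proof: the paper disposes of this corollary in one line, ``This follows from Lemma~\ref{R0R1} and Proposition~\ref{R0R1U1First},'' and your reading of why is the right one. For $a+2b-2\le 0$, Lemma~\ref{R0R1} gives $d_{R,0}<d_{R,1}$ outright; for $a+2b-2>0$, $b<1$ and $(a,b)$ on or below the arc, Proposition~\ref{R0R1U1First} says the class-9 picture (which contains no $d_{R,1}$-region) describes where each of $d_{R,0}$, $d_{R,1}$, $d_{U,1}$ is smaller than the other two, so $d_{R,1}$ is never strictly below both of $d_{R,0}$ and $d_{U,1}$ and \emph{a fortiori} never smallest among all six.

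Three corrections before you write it up. First, do not cite Table~\ref{Naive}: that table merely records the conclusions proved in Section~\ref{elim2}, of which this corollary is one, so the appeal is circular; your argument stands without it. Second, your identification of the region where $a+2b-2>0$, $b<1$ and $(a,b)$ lies on or below the arc with ``equivalence classes 7, 8 and 9'' is wrong --- class 7 satisfies $2b+a-2<0$, and the region also contains classes 10 and 11 and parts of 3--5; fortunately Proposition~\ref{R0R1U1First} covers that whole region with no case split by class, so you do not need this identification at all. Third, for the piece with $a+2b-2\le 0$ the lemma you want is \ref{R0R1} (which you do list at the end), not \ref{R0U1}: the latter compares $d_{R,0}$ with $d_{U,1}$ and says nothing about $d_{R,1}$.
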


\begin{proof}
This follows from Lemma~\ref{R0R1} and
Proposition~\ref{R0R1U1First}.
\end{proof}

\begin{prop}
The distance function $d_{R,1}$ is not smallest below the arc of
$a-b^2=0$ in the $ab$-plank.
\end{prop}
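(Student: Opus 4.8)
The plan is to show that on the entire region of the $ab$-plank where $a - b^2 < 0$, the distance function $d_{R,1}$ is pointwise dominated by one of the other five distance functions, so it can never be the unique (strict) minimum at any $(x,y) \in \calF$. The natural candidates to dominate $d_{R,1}$ are $d_{R,0}$ and $d_{U,1}$ (for $b < 1$) or $d_{R,0}$ and $d_{U,2}$ (for $b \geq 1$), since these are precisely the functions whose pairwise comparison with $d_{R,1}$ has already been analyzed in Lemmas~\ref{R0R1}, \ref{R0U1}, \ref{R1U1} and \ref{R1U2}. The key geometric fact I would exploit is that each equation $d_{R,1} = d_\alpha$ (for $\alpha \in \{(R,0),(U,1),(U,2)\}$) cuts $\calF$ into a ``$d_{R,1}$-smaller'' side and a ``$d_\alpha$-smaller'' side, and $\calF$ is covered by the union of the two $d_\alpha$-smaller sides once $a - b^2 < 0$; equivalently, the region of $\calF$ on which $d_{R,1} < d_{R,0}$ and the region on which $d_{R,1} < d_{U,1}$ (or $d_{R,1} < d_{U,2}$) are disjoint.

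First I would split into the two cases $b < 1$ and $b \geq 1$. For $b < 1$: by the Corollary just above, $d_{R,1}$ is already not smallest below the arc of the ellipse $a^2 + (2b-2)a + 3b^2 - 2b - 1 = 0$, so it suffices to treat the sub-region where $a - b^2 < 0$ but $(a,b)$ lies on or above that ellipse. There, the region where $d_{R,1}$ beats $d_{R,0}$ is the region to the left of the arc of Lemma~\ref{R0R1}, and the region where $d_{R,1}$ beats $d_{U,1}$ is the region below $y = b/2$ (Lemma~\ref{R1U1}); I would show these two regions are disjoint in $\calF$ precisely when $a - b^2 < 0$, by checking that the arc $d_{R,0} = d_{R,1}$ meets $x = 0$ at a $y$-value that is $\leq b/2$ under the hypothesis $a < b^2$. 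This is the inequality to verify: plug $x = 0$ into $d_{R,0} = d_{R,1}$ to solve for $y$, and compare with $b/2$; the condition $a - b^2 \leq 0$ should be exactly what makes the comparison go the right way (with equality governed by the curve $a = b^2$, consistent with the note after Lemma~\ref{R1U0} and the discussion in Section~\ref{TheEC}). For $b \geq 1$: by Lemma~\ref{R1U1} we only need $d_{U,2}$ (not $d_{U,1}$), and I would run the same disjointness argument with the arc $d_{R,1} = d_{R,0}$ from the $b > 1$ part of Lemma~\ref{R0R1} and the arc $d_{R,1} = d_{U,2}$ from Lemma~\ref{R1U2}, again reducing to a one-variable inequality in $y$ at $x = 0$ that is controlled by $a - b^2$.

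The main obstacle I anticipate is the bookkeeping at the boundary of $\calF$ and at the corner $(\frac{a}{2},\frac{1}{2})$: all three of $d_{R,0} = d_{R,1}$, $d_{R,1} = d_{U,1}$, and $d_{R,1} = d_{U,2}$ pass through $(\frac{a}{2},\frac{1}{2})$ (Lemmas~\ref{R0R1}, \ref{R1U1}, \ref{R1U2}), so the two ``$d_{R,1}$-smaller'' regions share that corner point on $\partial\calF$, and I must be careful to argue that away from the corner they are genuinely disjoint, and that at the corner $d_{R,1}$ is not \emph{strictly} smallest (it ties other functions there). A clean way to handle this is to note that ``smallest'' was defined to require strict inequality on an open set, so a shared boundary point contributes nothing; the real content is then the single algebraic inequality comparing the $x=0$ intercepts of the two arcs. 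A secondary check is that I have covered all of $\calF$: once the two ``$d_{R,1}$-smaller'' open regions are shown to be disjoint, every interior point of $\calF$ lies in the complement of at least one of them, i.e. has $d_{R,0} \leq d_{R,1}$ or $d_{U,1} \leq d_{R,1}$ (resp. $d_{U,2} \leq d_{R,1}$), so $d_{R,1}$ is never strictly below all five others — which is exactly the claim. I would close by remarking that the same reasoning, reversed, explains why a $d_{R,1}$-region \emph{does} appear in the interior once one crosses $a - b^2 = 0$, matching the phenomenon described in Section~\ref{TheEC}.
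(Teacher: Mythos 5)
There are two problems here, one of orientation and one of substance. First, ``below the arc of $a-b^2=0$'' means $b<\sqrt{a}$, i.e.\ $a-b^2>0$. This is forced by the paper's own usage: equivalence class 15 satisfies $a-b^2<0$ and is precisely where a $d_{R,1}$-region appears in the interior of $\calF$, so the proposition cannot be about that side of the curve. You work throughout with $a-b^2<0$, where the claimed conclusion is actually false. On the correct side, $b^2<a\le 1$ forces $b<1$, so your $b\ge 1$ case is vacuous, and (after invoking the Corollary) the region to be treated is the thin strip above the ellipse $a^2+(2b-2)a+3b^2-2b-1=0$ and below $a=b^2$.

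Second, and more seriously, the disjointness you propose --- that $\{d_{R,1}<d_{R,0}\}$ and $\{d_{R,1}<d_{U,1}\}$ do not meet --- fails on exactly that strip. The $x=0$ intercept of $d_{R,0}=d_{R,1}$ is $y_0=\tfrac{1}{2}\bigl(-b+\sqrt{b^2-a^2-2ab+2a+2b+1}\bigr)$, and $y_0\ge \tfrac{b}{2}$ holds if and only if $a^2+(2b-2)a+3b^2-2b-1\le 0$, i.e.\ on or below the ellipse. So the intercept comparison you plan to carry out only reproduces the Corollary; it is governed by the ellipse, not by $a-b^2$. Above the ellipse one has $y_0<\tfrac{b}{2}$, and the curvilinear triangle bounded by $x=0$, $y=\tfrac{b}{2}$ and the arc $d_{R,0}=d_{R,1}$ (the paper's ``Region Left'') is a nonempty open set on which $d_{R,1}$ beats both $d_{R,0}$ and $d_{U,1}$ --- consistent with Proposition~\ref{R0R1U1First}, which says that above the ellipse the three functions $d_{R,0},d_{R,1},d_{U,1}$ alone already produce the class-16 picture, containing a $d_{R,1}$-region. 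The proposition therefore cannot be proved using only $d_{R,0}$ and $d_{U,1}$; the essential missing player is $d_{U,0}$. The paper introduces ``Region Right,'' where $d_{R,1}$ beats $d_{U,0}$ and $d_{U,1}$, bounded on the left by $d_{R,1}=d_{U,0}$, and shows Region Left and Region Right are disjoint by comparing their intercepts $x_L$ and $x_R$ on $y=\tfrac{b}{2}$ (this is where $a-b^2$ finally enters: $x_L<x_R$ exactly when $b^2<a$), plus a slope-and-concavity argument. Disjointness gives $d_{U,0}\le d_{R,1}$ throughout Region Left, which is what eliminates $d_{R,1}$. Since your proposal never compares $d_{R,1}$ with $d_{U,0}$, it cannot close.
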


Note on $a-b^2=0$ for $a' \leq a \leq 1$ we have
$d_{R,0}=d_{R,1}=d_{U,0}=d_{U,1}$ at a point in $\calF$.

\begin{proof}
From the previous proposition, it suffices to prove this for
$(a,b)$ above $a^2+(2b-2)a+3b^2-2b-1=0$ and below $a-b^2=0$ in the
$ab$-plank. We restrict to such $(a,b)$. From
Proposition~\ref{R0R1U1First}, the region of $\calF$ on which
$d_{R,1}$ could be smallest is bounded above by $y=\frac{b}{2}$,
to the left by $x=0$ and to the right by $d_{R,0}=d_{R,1}$. We
call this Region Left.

Using the results of Lemmas~\ref{R1U0} (note this subset of the $ab$-plank is to the right of
$(\sqrt{2}+1)a-b-1 = 0$)  and \ref{R1U1}, we see that the
region on which $d_{R,1}$ could be smallest is bounded above by
$y=\frac{b}{2}$, to the left by $d_{R,1}=d_{U,0}$, to the right by
$x=\frac{a}{2}$ and below by $y=0$. We call this Region Right.

In order for $d_{R,1}$ to be smallest somewhere, the interiors of Region Left and
Region Right must overlap. Both regions are bounded above by
$y=\frac{b}{2}$. The rightmost point of Region Left is on
$y=\frac{b}{2}$ and has $x$-coordinate
$x_L=\frac{1}{2}\sqrt{a^2+(2b-2)a+3b^2-2b-1}$. The leftmost point
of Region Right, that is on $y=\frac{b}{2}$, has $x$-coordinate
$x_R=\frac{1}{2}\sqrt{a^2+(2b+2)a-(b+1)^2}$.
For $(a,b)$ above $a^2+(2b-2)a+3b^2-2b-1=0$,
the subset below $a=b^2$ is defined by $x_L < x_R$.

The specified arc of $d_{R,0}=d_{R,1}$ bounds Region Left on the
right. Its slope at $(x_L,\frac{b}{2})$ is $\frac{x_L}{-x_L+b}$
(which is positive) and the arc is concave up. The subset of the
$ab$-plank of consideration is between the graphs of
$a^2+(2b-2)a+3b^2-2b-1=0$ (where  $\frac{x_L}{-x_L+b}=0$) and
$a^2+(2b-2)a+2b^2-2b-1=0$ (where  $\frac{x_L}{-x_L+b}=1$). Using
continuity, we see that for all $(a,b)$ of interest we have
$\frac{x_L}{-x_L+b} < 1$. The specified arc of $d_{R,1}=d_{U,0}$
bounds Region Right on the left. Its slope at $(x_R,\frac{b}{2})$
is always 1.
So, given the slopes and concavities (see Lemma~\ref{R1U0}), it is
impossible for Region Left and Region Right to overlap.
\end{proof}

\begin{prop}
\label{WhereR2NotSmallest} The distance function $d_{R,2}$ is not
smallest below the arc of the ellipse $2a^2+(-3b+1)a+2b^2-2b=0$
from $(a,b)=(0,1)$ to $(1,\frac{3}{2})$ in the $ab$-plank.
\end{prop}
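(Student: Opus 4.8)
The plan is to show that if $d_{R,2}$ is smallest at a point $(x,y)\in\calF$ then necessarily $d_{R,2}<d_{R,0}$ and $d_{R,2}<d_{R,1}$ there, and then to prove that no such point exists once $(a,b)$ lies below the given arc. Write $C=C(a,b)=2a^2-3ab+a+2b^2-2b$, so the arc is the part of the ellipse $C=0$ from $(0,1)$ to $(1,\frac{3}{2})$; since the complementary (lower) arc of $C=0$ passes only through $b$-values at most $1$, a point of the $ab$-plank lies below the given arc exactly when $b\le 1$, or $b>1$ and $C<0$. If $b\le 1$ then Lemma~\ref{R0R2} already gives $d_{R,0}<d_{R,2}$ on all of $\calF$, so assume $b>1$ and $C<0$. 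Evaluating $C$ along the curves $2ab-2a-1=0$ and $a-b+1=0$ shows it is nonnegative there, so, since $\{C<0\}$ is the interior of the ellipse, we also have $2ab-2a-1<0$ and $b<a+1$ throughout the region to be treated. Finally, if $b\le a+\frac{1}{2}$ then Lemma~\ref{R1R2} gives $d_{R,1}<d_{R,2}$ in the interior of $\calF$, so it remains only to treat $a+\frac{1}{2}<b<a+1$.

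For these $(a,b)$ I would use only $d_{R,0},d_{R,1},d_{R,2}$, so that $d_{R,2}$ can be smallest only on $R_0\cap R_1$, where $R_0=\{(x,y)\in\calF:d_{R,2}<d_{R,0}\}$ and $R_1=\{(x,y)\in\calF:d_{R,2}<d_{R,1}\}$. Since $b>1$ and $2ab-2a-1<0$, Lemma~\ref{R0R2} says $\{d_{R,0}=d_{R,2}\}\cap\calF$ is a single arc of negative slope running from a point of $\{y=\frac{1}{2}\}$ down to a point $(\frac{a}{2},y_0)$ of $\{x=\frac{a}{2}\}$, with $d_{R,2}$ the smaller function on the side containing $(\frac{a}{2},\frac{1}{2})$; hence $R_0$ is the connected component of the complement (in $\calF$) of that arc which contains $(\frac{a}{2},\frac{1}{2})$, and, the arc being monotone with lowest point $(\frac{a}{2},y_0)$, we get $R_0\subseteq\{y\ge y_0\}$. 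Likewise, since $b>1$, $b>a+\frac{1}{2}$ and $b<a+1$, Lemma~\ref{R1R2} says $\{d_{R,1}=d_{R,2}\}\cap\calF$ is the point $(\frac{a}{2},\frac{1}{2})$ together with a single arc of positive slope whose highest point is $(\frac{a}{2},y_1)$ on $\{x=\frac{a}{2}\}$, with $d_{R,2}$ the smaller function below the arc; hence $R_1$ is the component of the complement which contains $(\frac{a}{2},0)$ (indeed $d_{R,2}<d_{R,1}$ there, which is exactly the inequality $b>a+\frac{1}{2}$), and $R_1\subseteq\{y\le y_1\}$.

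It then remains to compute $y_0$ and $y_1$ and to compare them. Setting $x=\frac{a}{2}$ in $d_{R,0}-d_{R,2}$ gives $4y^2=1+2a-2ab$, so $y_0=\frac{1}{2}\sqrt{1+2a-2ab}$, which lies in $(0,\frac{1}{2})$ because $2ab-2a-1<0$ and $b>1$. Setting $x=\frac{a}{2}$ in $d_{R,1}-d_{R,2}$ gives $2y^2+(2a-2b)y+(b-a-\frac{1}{2})=0$, whose roots are $\frac{1}{2}$ and $y_1=b-a-\frac{1}{2}$, with $y_1\in(0,\frac{1}{2})$ because $a+\frac{1}{2}<b<a+1$. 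As $\sqrt{1+2a-2ab}\ge 0$ and $2b-2a-1>0$, the inequality $y_0\le y_1$ is equivalent to $1+2a-2ab\le(2b-2a-1)^2$, which simplifies to $0\le 2C$. Hence $C<0$ forces $y_0>y_1$, so $R_0\cap R_1\subseteq\{y\ge y_0\}\cap\{y\le y_1\}=\emptyset$, and $d_{R,2}$ is not smallest below the arc.

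The main obstacle is not a single hard estimate but the bookkeeping: verifying which auxiliary inequalities accompany $C<0$ below the arc, and---more importantly---making the containments $R_0\subseteq\{y\ge y_0\}$ and $R_1\subseteq\{y\le y_1\}$ rigorous, since these rely on using the full shape data (single arc, sign of slope, which sides of $\calF$ the endpoints lie on, which side $d_{R,2}$ is the smaller one) supplied by Lemmas~\ref{R0R2} and~\ref{R1R2}. It is also worth recording that the curve $C=0$ (for $b\ne 1$) is exactly where $d_{R,0}$, $d_{R,1}$, $d_{R,2}$ all agree at a point of $\{x=\frac{a}{2}\}$, which is why $y_0$ and $y_1$ coincide along it.
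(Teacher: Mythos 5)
Your proof is correct and follows essentially the same route as the paper's: reduce to $b>1$ via Lemma~\ref{R0R2}, locate the region below the arc relative to $2ab-2a-1=0$ and $a-b+1=0$, and then use the shape data of Lemmas~\ref{R0R2} and~\ref{R1R2} to see that the regions where $d_{R,2}$ beats $d_{R,0}$ and where it beats $d_{R,1}$ are separated by their respective intercepts with $x=\frac{a}{2}$. The only difference is that where the paper verifies the ordering of those intercepts by testing a sample $(a,b)$ and appealing to continuity, you compute $y_0=\frac{1}{2}\sqrt{1+2a-2ab}$ and $y_1=b-a-\frac{1}{2}$ explicitly and show $y_0>y_1$ is literally equivalent to $2a^2+(-3b+1)a+2b^2-2b<0$, which is a cleaner and fully self-contained version of the same step.
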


Note $2a^2+(-3b+1)a+2b^2-2b=0$ is where $d_{R,0}, d_{R,1},
d_{R,2}$ have a triple intersection on $x=\frac{a}{2}$.

\begin{proof}
For $b\leq 1$, the result follows from Lemma~\ref{R0R2}.  We can test a sample $(a,b)$ below the arc of the
ellipse $2a^2+(-3b+1)a+2b^2-2b=0$ and above $b=1$ to see that
$d_{R,0}=d_{R,2}$ meets $x=\frac{a}{2}$ above where
$d_{R,1}=d_{R,2}$ does. Combining Lemmas~\ref{R0R2} and \ref{R1R2}
with the fact that the arc
of the ellipse $2a^2+(-3b+1)a+2b^2-2b=0$ from $(a,b)=(0,1)$ to
$(1,\frac{3}{2})$ in the $ab$-plank is below $2ab-2a-1=0$ and
below $a-b+1=0$,
gives the result.\end{proof}

\begin{lemma}
\label{R1U0andU0U1}
For $(a,b)$ to the left of $(\sqrt{2}+1)a-b-1 = 0$, the upper components of $d_{R,1}=d_{U,0}$ and of
$d_{U,0}=d_{U,1}$ satisfy $y > \frac{b}{2}$ and their lower components satisfy $y < \frac{b}{2}$.
\end{lemma}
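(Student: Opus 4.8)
The plan is to collapse the whole statement to one quadratic inequality by working on the line $y=\frac{b}{2}$, where $d_{R,1}$ and $d_{U,1}$ already coincide. First I would record, straight from Table~\ref{10distances}, that $d_{R,1}-d_{U,1}=4x(2y-b)$ (this is the computation underlying Lemma~\ref{R1U1}), so on the entire line $y=\frac{b}{2}$ we have $d_{R,1}=d_{U,1}$. Hence on that line the equation $d_{R,1}=d_{U,0}$ is equivalent to $d_{U,0}=d_{U,1}$, so the two hyperbolas $d_{R,1}=d_{U,0}$ and $d_{U,0}=d_{U,1}$ meet $y=\frac{b}{2}$ in exactly the same set of points. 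I would also note from Table~\ref{10distances} that the equation of $d_{U,0}=d_{U,1}$ is obtained from that of $d_{R,1}=d_{U,0}$ by the substitution $x\mapsto -x$; thus the second hyperbola is the reflection of the first across $x=0$, every horizontal line is preserved, and it suffices to analyze $d_{R,1}=d_{U,0}$.

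Next I would substitute $y=\frac{b}{2}$ into $d_{R,1}-d_{U,0}$ (again from Table~\ref{10distances}). The terms in $x$ to the first power cancel, and one is left with $4x^2=a^2+(2b+2)a-(b+1)^2$. Treating the right-hand side as a quadratic in $a$ and writing $s=b+1$, its roots are $a=s(-1\pm\sqrt{2})$, so it is negative precisely for $0<a<(\sqrt{2}-1)(b+1)$, i.e.\ precisely for $(\sqrt{2}+1)a-b-1<0$ (using $\frac{1}{\sqrt{2}+1}=\sqrt{2}-1$). Therefore, for $(a,b)$ strictly to the left of $(\sqrt{2}+1)a-b-1=0$, the line $y=\frac{b}{2}$ meets neither $d_{R,1}=d_{U,0}$ nor $d_{U,0}=d_{U,1}$ anywhere in the $xy$-plane.

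Finally I would invoke the global shape information already in hand: by Lemma~\ref{R1U0}, for such $(a,b)$ the hyperbola $d_{R,1}=d_{U,0}$ has an upper component (concave up) and a lower component (concave down), lying above and below the intersection of its asymptotes, whose slopes are $-1\pm\sqrt{2}$; in particular the upper component attains arbitrarily large $y$ and the lower one arbitrarily negative $y$. Each component is connected and, by the previous paragraph, disjoint from the line $y=\frac{b}{2}$; since the set of $y$-values on a connected set is an interval, the upper component lies entirely in $\{y>\frac{b}{2}\}$ and the lower entirely in $\{y<\frac{b}{2}\}$. The identical conclusion for $d_{U,0}=d_{U,1}$ then follows immediately from the $x\mapsto -x$ symmetry noted above (or, equally well, directly from Lemma~\ref{U0U1}).

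The calculations are short. I expect the quadratic computation in the second step — verifying that $a^2+(2b+2)a-(b+1)^2<0$ is \emph{exactly} the condition ``to the left of $(\sqrt{2}+1)a-b-1=0$'' — to be the core of the argument, and the only real care needed elsewhere is matching ``upper/lower component'' in the statement with the concavity/asymptote descriptions in Lemmas~\ref{R1U0} and \ref{U0U1}, so that ``upper'' really denotes the branch unbounded above in $y$.
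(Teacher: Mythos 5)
Your proof is correct and rests on the same computation as the paper's: both reduce the claim to the sign of $a^2+(2b+2)a-(b+1)^2$ relative to the line $(\sqrt{2}+1)a-b-1=0$. The only real difference is cosmetic --- the paper locates the $y$-extrema of the two branches (where the slope is $0$) and finds them at $y=\frac{b}{2}\pm\frac{1}{2}\sqrt{-\frac{1}{2}\bigl(a^2+(2b+2)a-(b+1)^2\bigr)}$, whereas you show the branches miss the line $y=\frac{b}{2}$ through the hyperbola's center and finish by connectedness; your $x\mapsto -x$ reflection handling $d_{U,0}=d_{U,1}$ in one stroke is a tidy economy that the paper's ``in both cases'' exploits implicitly.
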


\begin{proof}
It region of the $ab$-plank where $d_{R,1}=d_{U,0}$ and $d_{U,0}=d_{U,1}$ have upper and lower
components is the subset to the left of $(\sqrt{2}+1)a-b-1 = 0$.
At the minimum of the upper components and the maximum of the lower components, the slopes are 0.
On $d_{R,1}=d_{U,0}$ that occurs where $x=y-\frac{b}{2}$ and on $d_{U,0}=d_{U,1}$ that occurs where
$x=-y+\frac{b}{2}$. We use those to replace $x$ in $d_{R,1}=d_{U,0}$ and $d_{U,0}=d_{U,1}$
and solve for $y$ to get $y=\frac{b}{2}\pm \frac{1}{2}\sqrt{-\frac{1}{2}a^2+(-b-1)a + \frac{1}{2}b^2+b+\frac{1}{2}}$
in both cases. Note that $-\frac{1}{2}a^2+(-b-1)a + \frac{1}{2}b^2+b+\frac{1}{2}=0$ on $(\sqrt{2}+1)a-b-1 = 0$.
\end{proof}

\begin{prop}
\label{minU0first} The distance function $d_{U,0}$ is not smallest
above $a^2+2ab-2b=0$ in the $ab$-plank. \end{prop}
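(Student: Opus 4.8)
The plan is to show that, for $(a,b)$ above the curve $a^2+2ab-2b=0$, the distance function $d_{R,0}$ is strictly smaller than $d_{U,0}$ at \emph{every} point of $\calF$, which immediately prevents $d_{U,0}$ from being smallest there. This is already the content of the last sentence of Lemma~\ref{R0U0}, so at the level of the write-up the proof is essentially a citation of that lemma; the remainder of the argument just records why that statement holds, since it is so short.

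First I would compute $d_{R,0}-d_{U,0}$ from Table~\ref{10distances}, obtaining $4y^2-4x^2+a^2+2ab-2b-1$. On the fundamental region $0\le x\le \frac{a}{2}$, $0\le y\le \frac{1}{2}$, the quantities $-4x^2$ and $4y^2-1$ are each at most $0$, so this difference is at most $a^2+2ab-2b$, with equality only at the corner $(0,\frac{1}{2})$. Second, I would pin down the sign convention: inside the $ab$-plank ($0<a\le 1$, $0<b$) the quantity $a^2+2ab-2b=a^2+2b(a-1)$ is strictly decreasing in $b$ for $a<1$, is positive at $b=0$, and tends to $-\infty$; hence the locus $a^2+2ab-2b=0$ is the graph $b=\frac{a^2}{2(1-a)}$ for $0<a<1$, and "above" it means precisely $a^2+2ab-2b<0$ (the line $a=1$ lying entirely below this graph). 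Combining these, for every $(a,b)$ above the curve we get $d_{R,0}-d_{U,0}<0$ throughout $\calF$. Since a distance function is "smallest" at a point only when it is strictly smaller there than each of the other five—in particular than $d_{R,0}$—the function $d_{U,0}$ is not smallest at any point of $\calF$ for such $(a,b)$, which is the asserted statement.

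I do not expect any real obstacle. The only point requiring a moment of care is the orientation of "above the curve": one must confirm that the relevant part of the plank is exactly where the single monotone quantity $a^2+2ab-2b$ is negative. Once that is settled, maximizing $d_{R,0}-d_{U,0}$ over $\calF$ is immediate and the conclusion follows with no further computation.
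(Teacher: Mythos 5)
Your proof is correct and takes essentially the same route as the paper, which simply cites the final sentence of Lemma~\ref{R0U0} (``Above $a^2+2ab-2b=0$, $d_{R,0}<d_{U,0}$''); you additionally supply the short computation $d_{R,0}-d_{U,0}=4y^2-4x^2+a^2+2ab-2b-1\leq a^2+2ab-2b$ on $\calF$ that justifies that sentence, together with the correct orientation check that ``above'' the curve means $a^2+2ab-2b<0$. No gaps.
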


\begin{proof} This follows from Lemma~\ref{R0U0}.
\end{proof}

\begin{prop}
\label{minU0} The distance function $d_{U,0}$ is not smallest for
any $(a,b)$ simultaneously above  $(b^2+1)a^2 +2b^3a
-2b^3-b^2=0$ and $a^2+(2b+2)a-4b=0$. \end{prop}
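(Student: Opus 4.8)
The plan is to combine the pairwise information from Lemma~\ref{R1U0} with the elimination results already established. By Proposition~\ref{minU0first} we may assume $(a,b)$ lies on or below $a^2+2ab-2b=0$, and by the hypothesis we are above both $(b^2+1)a^2+2b^3a-2b^3-b^2=0$ and $a^2+(2b+2)a-4b=0$; in particular $b>1$ in the relevant region (the curve $(b^2+1)a^2+2b^3a-2b^3-b^2=0$ meets $a=1$ at $b=1$ and the region above it forces $b>1$). So I would first pin down exactly which subset of the $ab$-plank is under consideration and note that it is nonempty and connected.

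Next I would invoke the last paragraph of Lemma~\ref{R1U0}: to the right of $(b^2+1)a^2+2b^3a-2b^3-b^2=0$ with $b>1$, and to the \emph{right} of $a^2+(2b+2)a-4b=0$, the curve $d_{R,1}=d_{U,0}$ is a single arc whose upper endpoint is on $y=\tfrac12$ with $0<x<\tfrac{a}{2}$ and whose lower endpoint is on $y=0$, with $d_{R,1}$ smaller to the right of the arc and $d_{U,0}$ smaller to the left. Thus the region of $\calF$ on which $d_{U,0}$ could possibly beat $d_{R,1}$ is the strip to the left of this arc, bounded on the left by $x=0$. On the other side, Lemma~\ref{R0U0} says that on or below $a^2+2ab-2b=0$ the curve $d_{R,0}=d_{U,0}$ is a concave-up arc with nonnegative slopes, left endpoint on $x=0$ and right endpoint on $y=\tfrac12$, with $d_{R,0}$ smaller below it and $d_{U,0}$ smaller above. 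So the region on which $d_{U,0}$ could beat $d_{R,0}$ is the region \emph{above} that arc.

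The heart of the argument, then, is to show these two ``$d_{U,0}$-could-win'' regions are disjoint — equivalently, that the arc $d_{R,1}=d_{U,0}$ lies to the left of (or touches) the part of $\calF$ above the arc $d_{R,0}=d_{U,0}$, so no $(x,y)$ lies simultaneously left of the first arc and above the second. The natural way to do this is to compare the two arcs where it is cheapest: on the side $x=0$ and on the side $y=\tfrac12$. On $x=0$ both arcs have their left endpoints; the condition $a^2+(2b+2)a-4b>0$ should force the $d_{R,1}=d_{U,0}$ endpoint on $x=0$ to lie at or below the $d_{R,0}=d_{U,0}$ endpoint (this is where a triple intersection $d_{R,0}=d_{R,1}=d_{U,0}$ on $x=0$ would occur, namely on $(b^2+1)a^2+2b^3a-2b^3-b^2=0$, which by hypothesis we are strictly past). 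On $y=\tfrac12$ both arcs have their other endpoints, and I would check by a similar endpoint-plus-slope comparison that they are ordered consistently. Given the concavities recorded in Lemmas~\ref{R0U0} and \ref{R1U0} (the $d_{R,0}=d_{U,0}$ arc is concave up with nonnegative slope; the relevant component of $d_{R,1}=d_{U,0}$ near $a^2+(2b+2)a-4b=0$ need not be concave down to the right of that curve, so a little care is needed), the two arcs cannot cross in the interior of $\calF$ once their endpoints are correctly ordered, and hence the ``above $d_{R,0}=d_{U,0}$'' region lies entirely to the \emph{right} of the arc $d_{R,1}=d_{U,0}$, where $d_{R,1}<d_{U,0}$. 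Thus wherever $d_{U,0}<d_{R,0}$ we have $d_{R,1}<d_{U,0}$, so $d_{U,0}$ is never smallest.

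I expect the main obstacle to be the endpoint/concavity bookkeeping in the previous paragraph: the component of $d_{R,1}=d_{U,0}$ in play is, by Lemma~\ref{R1U0}, ``not necessarily concave down'' once one is to the right of $a^2+(2b+2)a-4b=0$, so ruling out an interior crossing of the two arcs requires either a sign computation on $d_{R,0}-d_{R,1}$ restricted to the arc $d_{R,1}=d_{U,0}$ (which is polynomial in $x$ for fixed $(a,b)$ and so can be checked to be one-signed on the relevant $x$-interval), or a direct verification that the arc $d_{R,1}=d_{U,0}$ meets $\calF$ only where $y<\tfrac12$ in the complement of the ``above $d_{R,0}=d_{U,0}$'' region. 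Either route reduces to showing a single explicit conic or quartic inequality holds on the stated subset of the $ab$-plank, which can be done by the same continuity-and-sample-point technique used in Propositions~\ref{R0R1U1First} and \ref{WhereR2NotSmallest}.
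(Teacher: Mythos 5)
Your proposal misidentifies the region in question, and that error propagates through the whole argument. ``Above'' both curves means the side on which both polynomials are negative, i.e.\ to the \emph{left} of $(b^2+1)a^2+2b^3a-2b^3-b^2=0$ (this is how the paper later uses the proposition: ``to the left of $(b^2+1)a^2+2b^3a-2b^3-b^2=0$, the distance function $d_{U,0}$ can not be smallest''). That region does not force $b>1$: it contains, for example, all of equivalence class 16 and large swaths with $b<1$ (both curves pass through $(\tfrac23,\tfrac23)$). Consequently the paragraph of Lemma~\ref{R1U0} you invoke --- which assumes $(a,b)$ is to the \emph{right} of the quartic with $b>1$ --- does not apply. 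In the actual region one is to the left of $(\sqrt2+1)a-b-1=0$, where Lemma~\ref{R1U0} says $d_{R,1}=d_{U,0}$ generally has \emph{two} components in $\calF$ (an upper, concave-up arc and a lower, concave-down arc), not the single arc your argument is built around.

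The deeper gap is that the disjointness you call ``the heart of the argument'' is not true, and the paper does not prove it. Near the corner $(0,\tfrac12)$, the set where $d_{U,0}<d_{R,0}$ (above the concave-up arc $d_{R,0}=d_{U,0}$) and the set where $d_{U,0}<d_{R,1}$ (above the \emph{upper} arc of $d_{R,1}=d_{U,0}$) do overlap in general, so $d_{R,0}$ and $d_{R,1}$ alone cannot eliminate $d_{U,0}$. The paper handles the lower arc by your kind of endpoint comparison on $x=0$, but it disposes of the region above the upper arc by bringing in a \emph{third} function: Lemmas~\ref{U0U1} and~\ref{R1U0andU0U1} show that region is contained in the set where $d_{U,1}\le d_{U,0}$. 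Your proposal never mentions $d_{U,1}$, so even after correcting the region and the arc structure, the plan as written would be trying to verify a conic/quartic inequality that fails.
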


Note $(b^2+1)a^2 +2b^3a -2b^3-b^2=0$ is where $d_{R,0}, d_{U,0},
d_{U,1}$ have a triple intersection on $x=0$ and where
$d_{R,0},d_{R,1}, d_{U,0}$ have a triple intersection on $x=0$.
This does not imply a quadruple intersection on $x=0$ since $x=0$ is
a subset of $d_{R,1}=d_{U,1}$.

\begin{proof} Note, from Proposition~\ref{minU0first} we
can restrict to $(a,b)$ that are also below $a^2+2ab-2b=0$.  In
the $ab$-plank, the line $(\sqrt{2}+1)a-b-1=0$ is always to the
right of $a^2+(2b+2)a-4b=0$ (though they are tangent at $b=1$);
see Lemma~\ref{R1U0}.
A straightforward computation shows for all $(a,b)$ of concern, the lower component of
$d_{R,1}=d_{U,0}$ meets $\calF$ in a single arc, with negative
slopes, meeting $x=0$ with $0 < y < \frac{1}{2}$ and $y=0$ with $0
< x < \frac{a}{2}$.
We can test a sample $(a,b)$ above $(b^2+1)a^2 +2b^3a -2b^3-b^2=0$
to see that $d_{R,0}=d_{U,0}$ meets $x=0$ above where
$d_{R,1}=d_{U,0}$ does; and see Lemma~\ref{R0U0}. So the only
place in $\calF$ where $d_{U,0}$ can be smallest is above the
upper arc of $d_{R,1}=d_{U,0}$.
Note there are $(a,b)$ for which this upper arc does,
and does not pass through $\calF$.

For $(a,b)$ above $a^2+(2b+2)a-4b=0$
Lemmas~\ref{U0U1} and \ref{R1U0andU0U1} show that the region above the upper
arc of $d_{R,1}=d_{U,0}$ does not intersect the region where $d_{U,0} < d_{U,1}$.
\end{proof}

\begin{prop}
 The distance function $d_{U,1}$
is not smallest above $b=1$ or below $a+b-1=0$ in the $ab$-plank.
\end{prop}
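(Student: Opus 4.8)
The plan is to reduce the assertion to a statement about the \emph{interior} of $\calF$ and then simply read off the two relevant pairwise comparisons from the Appendix lemmas; no manipulation of the explicit quadratics in Table~\ref{10distances} is needed.

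First I would record the topological reduction. For a fixed $(a,b)$ the subset of $\calF$ on which a given distance function is smallest is open in $\calF$ (as noted in Section~\ref{The paths}). Since $\calF$ is the closed rectangle $0\le x\le \frac{a}{2}$, $0\le y\le \frac{1}{2}$, its boundary $\partial\calF$ has empty interior in $\calF$, so every nonempty open subset of $\calF$ contains a point of the open rectangle $0<x<\frac{a}{2}$, $0<y<\frac{1}{2}$. Consequently, to show that $d_{U,1}$ is not smallest for a given $(a,b)$ it is enough to exhibit one of the other five distance functions that is strictly less than $d_{U,1}$ at every interior point of $\calF$.

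For $(a,b)$ above $b=1$ I would invoke Lemma~\ref{R1U1}: when $b>1$ the locus $d_{R,1}=d_{U,1}$ is exactly the segment $x=0$, and $d_{R,1}<d_{U,1}$ throughout the interior of $\calF$. Hence $d_{R,1}$ dominates $d_{U,1}$ on the interior, so $d_{U,1}$ is not smallest at any interior point and therefore, by the reduction, not smallest anywhere in $\calF$. For $(a,b)$ below $a+b-1=0$ I would instead invoke Lemma~\ref{R0U1}: below $a+b-1=0$ the curve $d_{R,0}=d_{U,1}$ meets $\calF$ only at the corner $(\frac{a}{2},\frac{1}{2})$, and $d_{R,0}<d_{U,1}$ throughout the interior of $\calF$; the same argument applies verbatim with $d_{R,0}$ in place of $d_{R,1}$.

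I do not expect any real obstacle here: the content is already packaged in Lemmas~\ref{R1U1} and~\ref{R0U1}, and the only genuinely new ingredient is the elementary observation that a distance function which is smallest somewhere in $\calF$ must be smallest at an interior point of $\calF$. The only thing to be careful about is matching the two regions of the $ab$-plank to the correct lemma ($b>1$ to Lemma~\ref{R1U1}, $a+b-1<0$ to Lemma~\ref{R0U1}).
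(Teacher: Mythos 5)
Your proposal is correct and matches the paper's own proof, which consists precisely of citing Lemmas~\ref{R1U1} and~\ref{R0U1} for the regions $b>1$ and $a+b-1<0$ respectively. Your added observation that an open nonempty ``smallest'' region must meet the interior of $\calF$ is a harmless (and valid) elaboration of what the paper leaves implicit.
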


\begin{proof}
See Lemmas~\ref{R1U1} and \ref{R0U1}.
\end{proof}

\begin{prop}
The distance function $d_{U,2}$ is not smallest below $a-2b+2=0$
in the $ab$-plank. \end{prop}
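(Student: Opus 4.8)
The plan is to observe that the region below $a-2b+2=0$ in the $ab$-plank is exactly the set $\{(a,b): a-2b+2>0\}$ (equivalently $0<b<\tfrac{a+2}{2}$, so automatically $b<\tfrac32$), and to split it into the strip $0<b<1$ and the sliver $1\le b<\tfrac{a+2}{2}$. In each piece I would show that $d_{U,2}$ fails to be strictly smaller than one particular other distance function everywhere on $\calF$, which is enough since ``smallest'' means strictly smaller than all of the other five.

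For $0<b<1$ I would compare $d_{U,2}$ with $d_{R,0}$. From Table~\ref{10distances},
\[
d_{U,2}-d_{R,0}=-(4b+4)\,y+2a(1-b)+2(b+1),
\]
which is affine in $y$ with strictly negative slope; on $\calF$ (where $0\le y\le\tfrac12$) it is therefore minimized at $y=\tfrac12$, where it equals $2a(1-b)$. Since $a>0$ and $b<1$, this is strictly positive, so $d_{U,2}>d_{R,0}$ at every point of $\calF$, and hence $d_{U,2}$ is never the smallest of the six distance functions when $b<1$.

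For $1\le b$ with $a-2b+2>0$ (so $1\le b<\tfrac{a+2}{2}\le\tfrac32$), I would invoke Lemma~\ref{R1U2}: its hypothesis ``$b\ge1$ and on or below $a-2b+2=0$'' is met, and it asserts $d_{R,1}<d_{U,2}$ on the interior of $\calF$, with $d_{R,1}=d_{U,2}$ only at $(\tfrac a2,\tfrac12)$ and possibly $(0,\tfrac12)$. By continuity $d_{R,1}\le d_{U,2}$ throughout $\calF$, so $d_{U,2}$ is never strictly smaller than $d_{R,1}$ and thus cannot be smallest. Combining the two cases covers all of $a-2b+2>0$, proving the proposition.

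The argument is essentially bookkeeping; the only thing that needs care is that Lemma~\ref{R1U2} is stated only for $b\ge1$, so the thin band $b<1$ must be dispatched separately by the elementary comparison with $d_{R,0}$ above, and one should check (as done here) that the two cases together exhaust the region below $a-2b+2=0$.
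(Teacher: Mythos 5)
Your proposal is correct and takes essentially the same route as the paper: the paper's proof also splits at $b=1$, citing Lemma~\ref{R1U2} for $b\ge 1$ and the fact that $d_{R,0}<d_{U,2}$ on $\calF$ for $b<1$, which you simply verify explicitly via the affine-in-$y$ computation.
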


Note $a-2b+2=0$ is where $d_{R,1}=d_{U,2}$ at $(0,\frac{1}{2})$.

\begin{proof}
For $b\geq 1$, see Lemma~\ref{R1U2}. For $b < 1$, $d_{R,0}=d_{U,2}$
does not meet $\calF$ and $d_{R,0} < d_{U,2}$.
\end{proof}

\section{The equivalence classes with $b \leq 1$}
\label{blessthan1}

In Table~\ref{47ec} we give a partition of the $ab$-plank into 47
subsets. In this and Section~\ref{bbiggerthan1} we will show that
all $(a,b)$ in a given subset are equivalent to each other, by the
equivalence relation defined in Section 4. We will also
show that the labeled $\calF$, for each $(a,b)$ in the given
subset, is equivalent to the one in Figure~\ref{TheFs1}. It
will only be at the conclusion of this article that we will know
that each of these subsets is actually a distinct equivalence
class, as there is no equivalence between any distinct pair of
labeled $\calF$'s in Figure~\ref{TheFs1}. By abuse of
language, we will refer to these 47 subsets now as equivalence
classes, even though it has not yet been proven that they are.

\subsection{Equivalence classes 1, 2 and 6}

From Table~\ref{Naive},  $d_{R,0}$ is the only distance function
that can be smallest in the interior of $\calF$ on equivalence
class 1. For equivalence classes 2 and 6 only $d_{R,0}$ and
$d_{U,0}$ can be smallest and Lemma~\ref{R0U0} determines the
labeled $\calF$'s.

\subsection{Equivalence classes 7 - 9}

From Table~\ref{Naive}, only  $d_{R,0}$ and $d_{U,1}$ can be
smallest  and Lemma~\ref{R0U1} determines the labeled $\calF$'s.

\subsection{Equivalence classes 3 - 5, 10, 11}

From Table~\ref{Naive}, $d_{R,0}, d_{U,0}$ and $d_{U,1}$ are the
only distance functions that can be smallest.

\begin{lemma}
\label{Equiv4} On equivalence class 4 we have
$\frac{1}{2}\sqrt{a^2+2ab-2b}$ $ = \frac{1}{2}(-a-2b+2)$.
\end{lemma}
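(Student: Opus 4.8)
The plan is to unwind the definitions of equivalence class 4 and of the two quantities appearing in the claimed identity, and then verify that the defining equation of class~4 makes the two sides equal. Equivalence class~4 is defined in Table~\ref{47ec} by $ab-2a+2b^2-3b+2=0$ together with $b<\frac23$; this is a one-dimensional piece of the $ab$-plank. So I am really proving an identity that holds along a curve, not on an open set.

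First I would identify where each side of the claimed identity comes from. The quantity $\frac12\sqrt{a^2+2ab-2b}$ is, by Proposition~\ref{R0R1U1First} and the computation in its proof (the value $x_R$ there, with $a^2+(2b+2)a-(b+1)^2$ replaced by the present radicand), the $x$-coordinate at which the arc $d_{R,0}=d_{U,0}$ meets a relevant line; more directly, $a^2+2ab-2b=0$ is flagged in the note after Lemma~\ref{R0U0} as the locus where $d_{R,0}=d_{U,0}$ at $(0,\frac12)$, and in Lemma~\ref{R0R1} the arc $d_{R,0}=d_{R,1}$ meets $y=0$ at $2x=b\pm\sqrt{(a+b-1)^2-2}$-type expressions, so $\frac12\sqrt{a^2+2ab-2b}$ is one of the $x$-intercepts of one of these conics on a side of $\calF$. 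Likewise $\frac12(-a-2b+2)$ is exactly $\frac12$ of the expression in $a+2b-2=0$, which by the note after Lemma~\ref{R0R1} is where $d_{R,0}=d_{R,1}$ at $(0,\tfrac12)$; one checks that this is an $x$-coordinate where a bordering curve hits a side of $\calF$ as well. The content of the lemma is that on the curve $ab-2a+2b^2-3b+2=0$ these two $x$-coordinates coincide — i.e.\ a triple intersection occurs on the boundary of $\calF$, which is precisely why equivalence class~4 is one-dimensional.

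Having pinned down the meaning, the actual verification is pure algebra: square both sides. The identity to check is
\begin{equation}
a^2+2ab-2b=(-a-2b+2)^2=a^2+4b^2+4+4ab-4a-8b,
\end{equation}
which simplifies to $0=4b^2+4-2ab-4a-6b$, i.e.\ $2b^2+2-ab-2a-3b=0$, i.e.\ $ab-2a+2b^2-3b+2=0$ — exactly the defining equation of equivalence class~4. Since both $\frac12\sqrt{a^2+2ab-2b}$ and $\frac12(-a-2b+2)$ are non-negative on class~4 (for the left side, $a^2+2ab-2b\geq 0$ follows because class~4 lies on the border of class~2, where $a^2+2ab-2b>0$, hence by closure it is $\geq 0$; for the right side, $b<\frac23$ and $a\leq 1$ give $a+2b-2<1+\frac43-2<0$, so $-a-2b+2>0$), equality of the squares gives equality of the quantities themselves.

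The main obstacle is essentially bookkeeping rather than mathematics: one must be sure that the two expressions are genuinely non-negative on the relevant curve so that taking square roots is legitimate, and one must correctly match the combinatorial description (``class~4 is the border between classes~2 and~5, and this is where a triple intersection hits a side of $\calF$'') to the algebra. I would present the sign checks explicitly — the left radicand is $\geq 0$ by continuity from equivalence class~2 where $a^2+2ab-2b>0$, and $-a-2b+2>0$ because $b<\frac23$ forces $a+2b-2<0$ — and then the squared identity reduces, after expanding and cancelling the common $a^2$ and $4ab$ terms, precisely to the class~4 defining polynomial, completing the proof. I do not expect any genuinely hard step; the only risk is an arithmetic slip in the expansion of $(-a-2b+2)^2$, which the reduction to $ab-2a+2b^2-3b+2=0$ guards against.
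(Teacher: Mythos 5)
Your overall strategy is exactly the paper's: the defining equation $ab-2a+2b^2-3b+2=0$ of class~4 is equivalent to $(-a-2b+2)^2=a^2+2ab-2b$, and the lemma follows by taking the positive square root once one knows $-a-2b+2>0$. However, your justification of that sign is arithmetically false: you claim $a+2b-2<1+\tfrac{4}{3}-2<0$, but $1+\tfrac{4}{3}-2=\tfrac{1}{3}>0$, so the crude bounds $a\leq 1$, $b<\tfrac{2}{3}$ do \emph{not} force $a+2b-2<0$. Since the sign determination is the only nontrivial content of the lemma (the squared identity is just the class-4 equation), this is a real gap. It is fixable: substituting $a=2-2b$ into $ab-2a+2b^2-3b+2$ gives $3b-2$, so the class-4 curve meets the line $a+2b-2=0$ only at $b=\tfrac{2}{3}$; hence for $b<\tfrac{2}{3}$ the curve stays on one side of that line, and a sample point (e.g.\ $b=\tfrac12$, $a=\tfrac23$, where $a+2b-2=-\tfrac13$) shows it is the side where $-a-2b+2>0$. (The paper asserts this positivity without proof, so some such argument is needed in either case.)

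Two smaller points. First, your algebra contains a sign slip: subtracting $a^2+2ab-2b$ from $a^2+4b^2+4+4ab-4a-8b$ yields $4b^2+4+2ab-4a-6b$, not $4b^2+4-2ab-4a-6b$; you then flip the sign of $ab$ again in the last step, so the two errors cancel and the conclusion is correct, but the intermediate lines as written are wrong. Second, class~4 borders classes~3 and~5, not class~2, so the "by closure from class~2" justification of $a^2+2ab-2b\geq 0$ is misattributed --- though that inequality is in any case automatic, since on class~4 the quantity equals the perfect square $(-a-2b+2)^2$.
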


Note that $\frac{1}{2}\sqrt{a^2+2ab-2b}$ $ = \frac{1}{2}(-a-2b+2)$
implies $ab-2a+2b^2-3b+2=0$ unconditionally. Also note
 between $a+b-1=0$ and $a+2 b-2=0$ and to the right of
$a^2+2ab-2b=0$ in the $ab$-plank (which includes equivalence class
4) that $\frac{1}{2}\sqrt{a^2+2ab-2b}$ and $\frac{1}{2}(-a-2b+2)$
are the $x$-coordinates of the intersections of $d_{R,0}=d_{U,0}$
and $d_{R,0}=d_{U,1}$ with $y=\frac{1}{2}$ in $\calF$,
respectively.

\begin{proof}
On equivalence class 4 we have $ab-2a+2b^2-3b+2=0$. We can rewrite
this as $(-a-2b+2)^2=a^2+2ab-2b$ or $-a-2b+2=\pm \sqrt{a^2+2ab
-2b}$. On equivalence class 4 we have $-a-2b+2>0$ and so
$\frac{1}{2}\sqrt{a^2+2ab-2b}$ $=\frac{1}{2}(-a-2b+2)$.
\end{proof}

\begin{prop}
\label{Equiv3.4.5} On equivalence classes 3 and 4, the curves
$d_{R,0}=d_{U,0}$, $d_{U,0}=d_{U,1}$ and the left endpoint of
$d_{R,0}=d_{U,1}$  meet $y=\frac{1}{2}$ from left to right and
coincidentally, respectively. On equivalence class 5, the curves
$d_{R,0}=d_{U,0}$, $d_{U,0}=d_{U,1}$ and the left endpoint of
$d_{R,0}=d_{U,1}$  meet $y=\frac{1}{2}$ from right to left, except
that for some $(a,b)$, the left endpoint of $d_{R,0}=d_{U,1}$
meets $x=0$ for $0 < y < \frac{1}{2}$.
\end{prop}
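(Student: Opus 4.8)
The plan is to reduce the statement to a comparison of three $x$-coordinates along the top edge $y=\frac12$ of $\calF$, using the explicit formulas of Table~\ref{10distances}. On $y=\frac12$ the function $d_{R,0}$ is constant in $x$, while $d_{U,0}-d_{R,0}=4x^2-D$ with $D:=a^2+2ab-2b$, and $d_{U,1}-d_{R,0}=2(x-x_3)(x-\frac a2)$ with $x_3:=\frac12(2-a-2b)$; the second factorization just records the two intersections of $d_{R,0}=d_{U,1}$ with $y=\frac12$ found in Lemma~\ref{R0U1}, namely the corner $(\frac a2,\frac12)$ and the ``left endpoint'' at $x=x_3$. Subtracting, $d_{U,0}-d_{U,1}$ on $y=\frac12$ is a parabola in $x$ opening upward, with value $-D-ax_3$ at $x=0$ and $2b(1-a)$ at $x=\frac a2$. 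I would first note, from the defining (in)equalities in Table~\ref{47ec}, that every $(a,b)$ in equivalence classes 3, 4, 5 satisfies $D\ge0$ (each such class lies on or to the right of $a^2+2ab-2b=0$), $a+b>1$, and $a<1$; hence $x_1:=\frac12\sqrt D$ lies in $[0,\frac a2)$, $x_3<\frac a2$, and $2b(1-a)>0$.

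The generic case is $x_3>0$, i.e.\ $a+2b<2$, which covers all of classes 3 and 4 and the main part of class 5. There $d_{U,0}-d_{U,1}$ is negative at $x=0$ and positive at $x=\frac a2$, so the upward parabola has exactly one root $x_2\in(0,\frac a2)$, the unique point where $d_{U,0}=d_{U,1}$ meets $y=\frac12$ inside $\calF$. To place $x_2$, I would evaluate $d_{U,0}-d_{U,1}$ at $x_1$ and at $x_3$: it equals $-2(x_1-x_3)(x_1-\frac a2)$ at $x_1$ and $4x_3^2-D=4(x_3-x_1)(x_3+x_1)$ at $x_3$; since $x_1<\frac a2$ and $x_1+x_3>0$, these two values carry opposite signs (according to whether $x_1<x_3$ or $x_1>x_3$), so the intermediate value theorem forces $x_2$ to lie strictly between $x_1$ and $x_3$. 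Finally, squaring (legitimate since $2-a-2b=2x_3>0$) shows that $x_1<x_3$ if and only if $D<(2-a-2b)^2$, which simplifies to $ab-2a+2b^2-3b+2>0$ --- this is essentially Lemma~\ref{Equiv4}. Thus on class 3 the order from left to right on $y=\frac12$ is $x_1<x_2<x_3$, i.e.\ $d_{R,0}=d_{U,0}$, then $d_{U,0}=d_{U,1}$, then $d_{R,0}=d_{U,1}$; on class 4 one has $x_1=x_3$, so $(x_1,\frac12)$ lies on $d_{R,0}=d_{U,0}$ and on $d_{R,0}=d_{U,1}$, hence on $d_{U,0}=d_{U,1}$ as well, and $x_1=x_2=x_3$ (the three meet coincidentally); on the part of class 5 with $a+2b<2$ the inequality reverses, giving $x_3<x_2<x_1$, i.e.\ the curves meet $y=\frac12$ from right to left in the listed order.

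It remains to handle the exceptional part of class 5 where $a+2b\ge2$, i.e.\ $x_3\le0$. There the non-corner root of $d_{R,0}=d_{U,1}$ on $y=\frac12$ falls outside $[0,\frac a2]$, so by Lemma~\ref{R0U1} (its ``between $a+2b-2=0$ and $b=1$'' case) the left endpoint of the arc $d_{R,0}=d_{U,1}$ instead sits on $x=0$ with $0<y<\frac12$ --- precisely the stated exception. One still checks that $d_{U,0}-d_{U,1}$ is negative at $x=0$ and positive at $x=\frac a2$, whence $x_2<x_1$ and $d_{R,0}=d_{U,0}$, $d_{U,0}=d_{U,1}$ continue to meet $y=\frac12$ from right to left.

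I expect the real work to be bookkeeping rather than conceptual. One must confirm from Table~\ref{47ec} that $D\ge0$ and $a+b>1$ hold on \emph{both} pieces of the definition of class 5; since those pieces are cut out partly by the quartic $(b^2+1)a^2+2b^3a-2b^3-b^2=0$, this needs a short computation evaluating that quartic on the lines $a+b=1$ and $a^2+2ab-2b=0$. And to speak of ``the curve $d_{U,0}=d_{U,1}$'' unambiguously one should identify, for each $(a,b)$ in classes 3--5, which sub-case of Lemma~\ref{U0U1} is in force --- i.e.\ locate $(a,b)$ relative to the line $(\sqrt2+1)a-b-1=0$ and the conic $a^2+(2b+2)a-4b=0$ --- and check in each that the arc meeting $y=\frac12$ does so at the point $x_2$ produced above.
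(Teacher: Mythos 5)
Your proposal is correct, and it reduces the statement to exactly the comparison the paper makes: the three $x$-coordinates $x_1=\frac12\sqrt{a^2+2ab-2b}$, $x_2=\frac12\bigl(b-1+\sqrt{a^2+(2b+2)a+b^2-6b+1}\bigr)$ and $x_3=\frac12(2-a-2b)$ at which $d_{R,0}=d_{U,0}$, $d_{U,0}=d_{U,1}$ and $d_{R,0}=d_{U,1}$ meet $y=\frac12$ (your upward parabola $2x^2+(2-2b)x-D-ax_3$ has $x_2$ as its positive root). The difference is in how the ordering is established. The paper evaluates the three expressions at sample $(a,b)$ on either side of class 4 and argues by continuity that the order can only change where two of the three coincide, which forces all three to coincide and hence $ab-2a+2b^2-3b+2=0$ (Lemma~\ref{Equiv4}). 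You instead derive the ordering directly: the signs of the parabola at $x_1$ and at $x_3$ are opposite, so the intermediate value theorem pins $x_2$ strictly between $x_1$ and $x_3$, and squaring gives $x_1<x_3$ if and only if $ab-2a+2b^2-3b+2>0$ --- precisely the identity $(2-a-2b)^2-D=2(ab-2a+2b^2-3b+2)$ that underlies Lemma~\ref{Equiv4}. Your route is more self-contained (no sample-point evaluation and no continuity argument), at the price of the bookkeeping you correctly flag; the needed facts ($D>0$ and $a+b>1$ on both pieces of class 5, and the location of the quartic to the right of $a^2+2ab-2b=0$) all check out. One point deserves more than the passing ``one still checks'': in the exceptional part of class 5 where $x_3\le 0$, negativity of $d_{U,0}-d_{U,1}$ at $(0,\frac12)$ is no longer automatic from the expression $-D-ax_3$, since $-ax_3\ge 0$ there; it is equivalent to $a^2+(2b+2)a-4b>0$, i.e.\ to class 5 lying on the correct side of the conic on which $d_{U,0}=d_{U,1}$ passes through $(0,\frac12)$ (cf.\ Lemma~\ref{U0U1}). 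This does hold, but it requires locating the boundary curves of class 5 relative to that conic (they are tangent in the limit $(a,b)\to(\frac23,\frac23)$, so the check is not a formality).
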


\begin{proof}
We get the results by evaluating $\frac{1}{2}\sqrt{a^2+2ab-2b}$,
$\frac{1}{2}(-a-2b+2)$ and
$\frac{1}{2}(b-1+\sqrt{a^2+(2b+2)a+b^2-6b+1})$ (the $x$-coordinate where
$d_{U,0}=d_{U,1}$ meets $y=\frac{1}{2}$) at sample $(a,b)$ on
either side of equivalence class 4 (see Lemma~\ref{Equiv4}).
Note, for this to change, it would be necessary that
$d_{U,0}=d_{U,1}$ meet either of the other two on $y=\frac{1}{2}$.
But then they would all meet there and that implies
$ab-2a+2b^2-3b+2=0$, which is part of the definition of
equivalence class 4.\end{proof}

\begin{prop}
\label{Equiv5.11.10} On equivalence classes 10 and 11,
$d_{R,0}=d_{U,1}$, $d_{R,0}=d_{U,0}$, and the upper arc of
$d_{U,0}=d_{U,1}$,
 meet $x=0$ from bottom to top, and
coincidentally, respectively. On equivalence class 5, when
$d_{R,0}=d_{U,1}$ meets $x=0$, it does so above where
$d_{R,0}=d_{U,0}$ does. For all $(a,b)$ in these three equivalence
classes, for which $d_{U,0}=d_{U,1}$ is two arcs, the intersection
of the lower arc with $x=0$ is below that of $d_{R,0}=d_{U,0}$ and
$d_{R,0}=d_{U,1}$ (except at the topmost point of equivalence
class 11 where all four coincide on $x=0$).
\end{prop}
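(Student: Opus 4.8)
The plan is to mimic the proof of Proposition~\ref{Equiv3.4.5}: identify the relevant intersection points with $x=0$ as explicit functions of $(a,b)$, observe that their ordering can only change along the curve on which three (or four) of these curves meet $x=0$ simultaneously, and then test a sample point in each of equivalence classes 5, 10 and 11 to fix the ordering. Concretely, from Lemmas~\ref{R0U0}, \ref{R0U1} and \ref{U0U1} I would record: the $y$-coordinate where $d_{R,0}=d_{U,0}$ meets $x=0$, the $y$-coordinate where the left endpoint of $d_{R,0}=d_{U,1}$ meets $x=0$ (when it does; by Lemma~\ref{R0U1} this happens exactly between $a+2b-2=0$ and $b=1$), and the $y$-coordinate where the upper arc of $d_{U,0}=d_{U,1}$ meets $x=0$. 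Each is obtained by setting $x=0$ in the corresponding conic from Table~\ref{10distances} and solving the resulting quadratic in $y$, choosing the root that lies in $(0,\tfrac12)$ as dictated by the cited lemmas.

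Next I would argue that on the open region between consecutive ``triple-intersection'' curves these three $y$-values are distinct (since each pairwise coincidence on $x=0$ forces one of the defining equations of equivalence class 11, namely $(b^2+1)a^2+2b^3a-2b^3-b^2=0$, by the Note following Lemma~\ref{R1U0} and the Note following Proposition~\ref{minU0}), so by continuity their ordering is constant on each such region. Then I would evaluate the three $y$-coordinates at one sample $(a,b)$ in the interior of equivalence class 10, one in 11's relative interior, and one in 5 (for the latter, a point where $d_{R,0}=d_{U,1}$ still meets $x=0$), and read off that in 10 and 11 the order from bottom to top is $d_{R,0}=d_{U,1}$, then $d_{R,0}=d_{U,0}$, then the upper arc of $d_{U,0}=d_{U,1}$, while in 5 the first two have swapped so $d_{R,0}=d_{U,1}$ lies above $d_{R,0}=d_{U,0}$ on $x=0$. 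At the topmost point of equivalence class 11, which lies on $a=b^2$ for $a'\le a\le 1$, the Note before Proposition~\ref{minU0} gives the quadruple intersection $d_{R,0}=d_{R,1}=d_{U,0}=d_{U,1}$ in $\calF$; restricting that to $x=0$ collapses all four of the listed curves to a single point there, which is the stated exception.

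For the lower-arc claim: when $d_{U,0}=d_{U,1}$ has two arcs (i.e. $(a,b)$ lies left of $(\sqrt2+1)a-b-1=0$, by Lemma~\ref{R1U0andU0U1}), the lower arc meets $x=0$ at the smaller of the two roots of the same quadratic, whose $y$-value is $\le \tfrac{b}{2}$. I would show this lies below both of the other two $x=0$ intersection heights, again by checking one sample point and invoking the same continuity/coincidence principle — the only way for the lower arc's $x=0$ height to cross either of the others is a simultaneous meeting of $d_{U,0}=d_{U,1}$, $d_{R,0}=d_{U,0}$ and $d_{R,0}=d_{U,1}$ on $x=0$, which once more forces $(b^2+1)a^2+2b^3a-2b^3-b^2=0$, i.e. we are at the exceptional topmost point of equivalence class 11. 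The main obstacle I anticipate is purely bookkeeping: correctly selecting, for each conic and each equivalence class, which root of the quadratic is the geometrically relevant one (the cited lemmas pin this down, but one must apply them carefully near $a+2b-2=0$ where the left endpoint of $d_{R,0}=d_{U,1}$ transitions from $y=\tfrac12$ to $x=0$), and verifying that no extraneous coincidence of these $x=0$ heights can occur off the curve $(b^2+1)a^2+2b^3a-2b^3-b^2=0$ within the three equivalence classes under consideration.
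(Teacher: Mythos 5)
Your proposal is correct and is essentially the paper's own argument, which simply defers to the method of Lemma~\ref{Equiv4} and Proposition~\ref{Equiv3.4.5}: write the $x=0$ intersection heights explicitly, observe that any change in their ordering forces a triple intersection on $x=0$ (i.e.\ $(b^2+1)a^2+2b^3a-2b^3-b^2=0$, which is exactly equivalence class 11), and pin down the ordering by sample points. The one place to be slightly more careful is the exceptional point: the quadruple intersection on $a=b^2$ alone does not force \emph{both} arcs of $d_{U,0}=d_{U,1}$ through that point --- for that you also need that the topmost point of class 11 lies on the degeneracy locus $(\sqrt{2}+1)a-b-1=0$, where the two left endpoints on $x=0$ merge.
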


\begin{proof}
This is a straightforward proof much like those of
Lemma~\ref{Equiv4} and Proposition~\ref{Equiv3.4.5}.
\end{proof}

\subsubsection{Equivalence classes 3 and 4}

The results of Lemmas~\ref{R0U0} and \ref{R0U1} and
Proposition~\ref{Equiv3.4.5} show that the graphs of
$d_{R,0}=d_{U,0}$ and of $d_{R,0}=d_{U,1}$ in $\calF$ do not
intersect (on equivalence class 3) or intersect at a single point
on $y=\frac{1}{2}$ (on equivalence class 4) and determine the
labeled $\calF$'s.

\subsubsection{Equivalence class 5}

From Lemma~\ref{R0U1} and Propositions~\ref{Equiv3.4.5} and
~\ref{Equiv5.11.10}, we see that $d_{R,0}=d_{U,0}$ and
$d_{R,0}=d_{U,1}$ cross in the interior of $\calF$. Since these
are both components of conics, they can only cross once. The upper
component (or the only component) of $d_{U,0}=d_{U,1}$ meets there
as well. Since there is only one crossing, we see that
$d_{R,0}=d_{U,0}$ and $d_{R,0}=d_{U,1}$ can not meet the lower
component of $d_{U,0}=d_{U,1}$ in the part of equivalence class 5
where there is a lower component. This combined with
Lemmas~\ref{R0U0} and \ref{U0U1} determines the labeled
$\calF$'s.

\subsubsection{Equivalence class 10} \label{equ.cl.10}

The slopes of $d_{U,0}=d_{U,1}$ are given by $\frac{dy}{dx}=\frac{-2x-2y+b}{2x-2y+b}$.
At $x=0$ the slope is 1. The upper arc is concave up. So the slopes of the upper
arc of $d_{U,0}=d_{U,1}$ in $\calF$ are all greater than or equal
to 1. The slopes of $d_{R,0}=d_{U,0}$
are given by $\frac{dy}{dx}=\frac{x}{y}$ and hence are biggest at its point
 $y=\frac{1}{2}$, where $x < \frac{1}{2}$; so all slopes of
  $d_{R,0}=d_{U,0}$ are less than 1.
 Since
$d_{R,0}=d_{U,1}$ is concave up and passes through $(\frac{a}{2},\frac{1}{2})$,
that point is where the slope, which is $\frac{a+b-1}{a+b+1} < 1$  is greatest;
so all slopes of   $d_{R,0}=d_{U,1}$ are less than 1.
These slope conditions, coupled with the result of
Proposition~\ref{Equiv5.11.10}, shows that the upper arc of
$d_{U,0}=d_{U,1}$ does not intersect the other two arcs and we
note, from Lemma~\ref{U0U1}, that $d_{U,0}$ is smallest above this
arc.

The distance function $d_{U,0}$ could only be smallest elsewhere
if the graph of $d_{R,0}=d_{U,0}$ and the lower arc of
$d_{U,0}=d_{U,1}$ through $\calF$ intersected twice. We now show
that those two arcs are on opposite sides of $y=\frac{b}{2}$. The
point with smallest $y$-coordinate on $d_{R,0}=d_{U,0}$
is $(0,\frac{1}{2}\sqrt{-a^2-2ab+2b+1})$. The curve
$\frac{1}{2}\sqrt{-a^2-2ab+2b+1}= \frac{b}{2}$ does not pass
through equivalence class 10. Evaluating at any sample $(a,b)$ in
this equivalence class shows that
$\frac{b}{2}<\frac{1}{2}\sqrt{-a^2-2ab+2b+1}$. The point with
largest $y$-coordinate on the lower arc of $d_{U,0}=d_{U,1}$ is
its point of intersection with $x+y=\frac{b}{2}$, where the slope
of $d_{U,0}=d_{U,1}$ is 0. The $y$-coordinate of that point is
$\frac{1}{2}(b-\sqrt{(-a^2+(-2b-2)a+(b+1)^2)/2})$, which is less
than $\frac{b}{2}$.
 So the only
part of $\calF$ where $d_{U,0}$ is smallest is above the upper
component of $d_{U,0}=d_{U,1}$. Since $d_{R,0}=d_{U,1}$ does not
intersect the upper component of $d_{U,0}=d_{U,1}$, it bounds the
regions where $d_{R,0}$ and $d_{U,1}$ are smallest in $\calF$.
Then Lemma~\ref{R0U1}  determines the labeled
$\calF$'s.

\subsubsection{Equivalence class 11}

The arguments for equivalence class 10 all hold here except that
$d_{U,0}=d_{U,1}=d_{R,0}$ on $x=0$.

\subsection{Equivalence classes 12 - 17}

For equivalence classes 12 - 15 and 17, the only distance
functions that can be smallest are $d_{R,0}$, $d_{R,1}$, $d_{U,0}$
and $d_{U,1}$. For equivalence class 16, the only distance
functions that can be smallest are $d_{R,0}$, $d_{R,1}$ and
$d_{U,1}$. So to determine, for each equivalence class, which
distance function is smallest where on $\calF$, we do the
following. For these equivalence classes,
Proposition~\ref{R0R1U1First} tells us where each of $d_{R,0}$,
$d_{R,1}$ and $d_{U,1}$ is smaller than the other two. Then, for
all but equivalence class 16, we consider where $d_{U,0}$ is
smaller than each of $d_{R,0}$, $d_{R,1}$, and $d_{U,1}$.

Note that $d_{R,1}=d_{U,1}$ consists of two line segments: $x=0$
and $y=\frac{b}{2}$.  Also $d_{R,1}=d_{U,0}$ and $d_{U,0}=d_{U,1}$
meet $x=0$ at the same one or two points.

\begin{prop}
\label{Equiv12.13.17} On equivalence class 12, the intersections
with $x=0$, from highest to lowest, are the upper arcs
 of $d_{U,0}=d_{U,1}$ and of $d_{R,1}=d_{U,0}$  (which coincide),
  $d_{R,0}=d_{U,0}$ and
the subset $y=\frac{b}{2}$ of $d_{R,1}=d_{U,1}$ in some order,
$d_{R,0}=d_{R,1}$, and the lower arcs of $d_{U,0}=d_{U,1}$ and of
$d_{R,1}=d_{U,0}$ (which coincide). On equivalence classes 13 and
17,  the intersections with $x=0$, from highest to lowest, are the
upper arcs of $d_{U,0}=d_{U,1}$ and of $d_{R,1}=d_{U,0}$
(though in part of equivalence class 17, these do not meet $x=0$), the
subset $y=\frac{b}{2}$ of $d_{R,1}=d_{U,1}$, the lower arcs of
$d_{U,0}=d_{U,1}$ and of $d_{R,1}=d_{U,0}$, $d_{R,0}=d_{R,1}$, and
$d_{R,0}=d_{U,0}$.
\end{prop}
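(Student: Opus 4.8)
The plan is to reduce the whole statement to the geometry along the single line $x=0$. Substituting $x=0$ into the formulas of Table~\ref{10distances}, and using that $x=0$ lies inside $\{d_{R,1}=d_{U,1}\}$, one sees immediately that on this line the equations $d_{R,1}=d_{U,0}$ and $d_{U,0}=d_{U,1}$ cut out the same points; this already proves the two ``(which coincide)'' assertions in the statement. Solving the resulting quadratics in $y$, the five heights at which our curves meet $x=0$ become the explicit functions $y_2=\frac12\sqrt{-a^2-2ab+2b+1}$ for $d_{R,0}=d_{U,0}$, $y_4=\frac12\bigl(-b+\sqrt{-a^2-2ab+b^2+2a+2b+1}\bigr)$ for $d_{R,0}=d_{R,1}$, $y_1^{\pm}=\frac12\bigl(b\pm\sqrt{-a^2-2ab-2a+b^2+2b+1}\bigr)$ for $d_{R,1}=d_{U,0}=d_{U,1}$ (the $+$ sign being the upper arc and the $-$ sign the lower), and $y_3=\frac b2$ for the $y=\frac b2$ component of $d_{R,1}=d_{U,1}$.

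Next I would note that the top-to-bottom order of $y_1^{\pm},y_2,y_3,y_4$ is locally constant in $(a,b)$ and can change only across the loci where two of these heights coincide, or where one of them passes through $y=0$ or $y=\frac12$ (so that an intercept enters or leaves $\calF$). Each coincidence locus is a curve already named in the paper: $y_2=y_4$, $y_2=y_1^{\pm}$ and $y_4=y_1^{\pm}$ all force $d_{R,0}=d_{R,1}=d_{U,0}$ (hence, on $x=0$, also $=d_{U,1}$), which by the notes after Lemma~\ref{R1U0} and Proposition~\ref{minU0} happens on the quartic $(b^2+1)a^2+2b^3a-2b^3-b^2=0$; $y_1^+=y_1^-$ and $y_1^{\pm}=y_3$ force the radicand $-a^2-2ab-2a+b^2+2b+1$ to vanish, i.e.\ the degeneracy line $(\sqrt2+1)a-b-1=0$ of Lemma~\ref{R1U0}; $y_4=y_3$ forces $d_{R,0}=d_{R,1}=d_{U,1}$ on $x=0$, the ellipse $a^2+(2b-2)a+3b^2-2b-1=0$ of Proposition~\ref{R0R1U1First}; and $y_2=y_3$ is the conic $a^2+2ab+b^2-2b-1=0$. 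The ``enters/leaves'' loci of interest are $a^2+(2b+2)a-4b=0$, where $y_1^+$ reaches $\frac12$ (the note after Lemma~\ref{U0U1}), and $a^2+2ab-2b=0$, where $y_2$ reaches $\frac12$; the remaining such loci are easily checked to stay away from the region in question.

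Reading off Table~\ref{47ec}, the quartic is exactly the common boundary separating class 12 (where it is $\le0$) from classes 13 and 17 (where it is $>0$), and $a^2+(2b+2)a-4b=0$ is the common boundary separating classes 12 and 13 (where it is $>0$) from class 17 (where it is $\le0$), across which only $y_1^+$ leaves $\calF$ through $y=\frac12$ while the remaining intercepts keep their order; the line $(\sqrt2+1)a-b-1=0$ lies strictly to the right of all three classes by Lemma~\ref{R1U0}, and the ellipse $a^2+(2b-2)a+3b^2-2b-1=0$ bounds these classes rather than crossing their interiors. It then remains only to check that the conic $a^2+2ab+b^2-2b-1=0$ does not meet the interiors of 12, 13 or 17 --- a sample-point (or resultant) computation --- and even in the borderline event that it grazes class 12 the conclusion is unaffected, since $y_2$ and $y_3$ both stay strictly between $y_1^+$ and $y_4$, which is precisely what the ``in some order'' phrasing allows. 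Consequently the order of intercepts is constant on the interior of each of the three classes, up to that harmless $y_2\leftrightarrow y_3$ swap in class 12, and it changes between class 12 and $\{13,17\}$ exactly across the quartic, where $y_2$, $y_4$ and one branch of $y_1$ collide.

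Finally I would evaluate $y_1^{\pm},y_2,y_3,y_4$ at one convenient interior point of each of classes 12, 13 and 17; the resulting numerical heights are in exactly the top-to-bottom orders asserted, and Lemmas~\ref{R0R1}, \ref{R0U0}, \ref{R1U0}, \ref{R1U1} and \ref{U0U1} identify which piece is the ``upper'' and which the ``lower'' arc (via the signs of the square roots) and account for the parenthetical sub-case of class 17 in which $y_1^+$ has risen above $\frac12$ so that only four intercepts remain in $\calF$. The main obstacle is the verification in the previous paragraph: classes 12, 13 and 17 are extremely thin slivers whose boundaries include quartic curves, so ruling out any coincidence curve other than the quartic boundary from their interiors requires locating the intersections of the listed conics and quartic with the Table~\ref{47ec} boundary curves precisely --- via resultants, and relying on the already-established fact that the relevant quartics each have a single, non-compact real component so their graphs can be trusted.
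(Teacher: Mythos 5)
Your proposal is correct and takes essentially the same approach as the paper: the paper's entire proof is a one-line appeal to the ``straightforward computation'' pattern of Lemma~\ref{Equiv4} and Proposition~\ref{Equiv3.4.5}, i.e.\ writing down the explicit $y$-intercepts on $x=0$, noting that their order can change only across coincidence loci that turn out to be exactly the boundary curves of classes 12, 13 and 17 (plus the conic $a^2+2ab+b^2-2b-1=0$, which the ``in some order'' clause absorbs), and evaluating at sample points. You have simply made explicit the computation the paper leaves implicit.
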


\begin{proof}
This is a straightforward computation much like those of
Lemma~\ref{Equiv4} and Proposition~\ref{Equiv3.4.5}.
\end{proof}

\subsubsection{Equivalence class 16}

From Table~\ref{Naive}, only $d_{R,0}, d_{R,1}$ and $d_{U,1}$ can
be smallest. Thus Proposition~\ref{R0R1U1First} determines the
labeled $\calF$'s.

\subsubsection{Equivalence  classes 12 and 13}
\label{ec12}

On equivalence classes 12 and 13, $d_{R,1}=d_{U,0}$ consists of
two arcs, each having left endpoint on $x=0$ with $0 < y <
\frac{1}{2}$. The right endpoints of the upper and lower arcs are
on $y=\frac{1}{2}$ and $y=0$ (respectively) with $0 < x <
\frac{a}{2}$; and see Lemma~\ref{R1U0}.
The slope on $d_{R,1}=d_{U,0}$ is given by $\frac{dy}{dx}=\frac{2x-2y+b}{2x+2y-b}$.
Since the lower arc is concave down, the biggest slope is at $x=0$ where
the slope is $-1$. So the
slopes of the lower arc of $d_{R,1}=d_{U,0}$ are all negative.

We first show that the subset of $\calF$ on which $d_{U,0} <
d_{U,1}$, that is above the upper arc of $d_{U,0}=d_{U,1}$, is
contained in the subset on which $d_{U,0} < d_{R,1}$ above the
upper arc of $d_{R,1}=d_{U,0}$ and is contained in the subset on
which $d_{U,0} < d_{R,0}$ (see Lemmas~\ref{R0U0} and \ref{U0U1}).

We saw in Section~\ref{equ.cl.10} that the slopes of the upper arc of $d_{U,0}=d_{U,1}$ are all greater
than or equal to 1 and the slopes of $d_{R,0}=d_{U,0}$ and the upper arc of $d_{R,1}=d_{U,0}$ are less
than 1. This, and the results of Proposition~\ref{Equiv12.13.17},
show that $d_{R,0}=d_{U,0}$ and the upper arc of $d_{R,1}=d_{U,0}$
do not pass through the region above the upper arc of
$d_{U,0}=d_{U,1}$. So $d_{U,0}$ is smallest above the upper arc of
$d_{U,0}=d_{U,1}$. From
Proposition~\ref{Equiv12.13.17}, the region above the upper arc of
$d_{U,0}=d_{U,1}$ is strictly above $y=\frac{b}{2}$ (where
$d_{R,1}=d_{U,1})$.

From Lemmas~\ref{R0U0}, \ref{R1U0}, \ref{U0U1} and \ref{R1U0andU0U1},
any other points where $d_{U,0}$ is smallest must be contained in
the intersection of the region below the lower arcs of
$d_{U,0}=d_{U,1}$ and of $d_{R,1}=d_{U,0}$ and above
$d_{R,0}=d_{U,0}$.
From Proposition~\ref{Equiv12.13.17}, on equivalence class 12,
$d_{R,0}=d_{U,0}$ meets $x=0$ above where the lower arc of $d_{R,1}=d_{U,0}$
does. Given the slopes of these arcs of $d_{R,0}=d_{U,0}$ (see Lemma~\ref{R0U0}) and
$d_{R,1}=d_{U,0}$, we see that this intersection is empty.

From Proposition~\ref{Equiv12.13.17}, for equivalence class 13, we
see that the $y$-coordinate of the intersections of the lower arcs
of $d_{R,1}=d_{U,0}$ and $d_{U,0}=d_{U,1}$ with $x=0$ is greater
than the $y$-coordinate of the intersection of $d_{R,0}=d_{U,0}$
with $x=0$.
The slope of the lower arc of $d_{U,0}=d_{U,1}$ at $x=0$ is 1 and the right
endpoint is on $x=\frac{a}{2}$ with $0 < y < \frac{1}{2}$.  So the region
below the lower arc of $d_{R,1}=d_{U,0}$ is contained in the
region below the lower arc of $d_{U,0}=d_{U,1}$. From Lemmas
\ref{R0U0} and \ref{U0U1} we see there is a second region in which
$d_{U,0}$ is smallest; it is above $d_{R,0}=d_{U,0}$ and below the
lower arc of $d_{R,1}=d_{U,0}$. From
Proposition~\ref{Equiv12.13.17}, this lower region in which
$d_{U,0}$ is smallest is strictly below $y=\frac{b}{2}$ (where
$d_{R,1}=d_{U,1}$ in the interior of $\calF$).

\subsubsection{Equivalence class 14}

This is a boundary of equivalence class 13. All of the arguments
there hold here except that the arcs of $d_{U,0}=d_{U,1}$,
 $d_{R,1}=d_{U,0}$ and the  $y=\frac{b}{2}$ subset of $d_{R,1}=d_{U,1}$
all coincide on $x=0$ (see Lemmas~\ref{U0U1} and \ref{R1U0}).

\subsubsection{Equivalence class 15}

A straightforward computation shows that on equivalence class 15,
$d_{R,1}=d_{U,0}$ is a single arc,
concave right, with upper and lower endpoints on $y=\frac{1}{2}$
and $y=0$, with $0 < x < \frac{a}{2}$, not meeting $x=0$; and  see Lemma~\ref{R1U0}.

On equivalence class 15 we have $b^2 > a$ so
$\frac{1}{2}\sqrt{a^2+(2b+2)a-b^2-2b-1}$

\noindent
 $<\frac{1}{2}\sqrt{a^2+2ba+b^2-2b-1}$ $<
\frac{1}{2}\sqrt{a^2+(2b-2)a+3b^2-2b-1}$. The latter three
expressions are the $x$-coordinates of the intersection points of
$d_{R,1}=d_{U,0}$, $d_{R,0}=d_{U,0}$ and $d_{R,0}=d_{R,1}$
(respectively) with $y=\frac{b}{2}$ (where $d_{R,1}=d_{U,1}$ in
the interior of $\calF$).

Given the locations of the lower
endpoints of $d_{R,0}=d_{U,0}$ (see Lemma~\ref{R0U0}) and of
$d_{R,1}=d_{U,0}$  and where each of these
meets $y=\frac{b}{2}$, we see that $d_{R,0}=d_{U,0}$ meets
$d_{R,1}=d_{U,0}$, and hence $d_{R,0}=d_{R,1}$, for some $y$-value
$y'$ with  $0 < y' < \frac{b}{2}$.

Since the endpoints of $d_{R,1}=d_{U,0}$ are on $y=0$ and
$y=\frac{1}{2}$, and it is part of a conic, the slope can not be 0.
So $d_{R,1}=d_{U,0}$ crosses $y=\frac{b}{2}$  exactly once and
there is exactly one triple intersection for $d_{R,1}$, $d_{U,0}$,
$d_{U,1}$. So the arc $d_{U,0}=d_{U,1}$ passes through that
crossing and can not cross $d_{R,1}=d_{U,0}$ elsewhere. By
evaluating at any $(a,b)$ on equivalence class 15, we see that
$d_{U,0}=d_{U,1}$ is to the left of $d_{R,1}=d_{U,0}$ for $y
> \frac{b}{2}$ and to the right for $y < \frac{b}{2}$.

This, with Lemmas~\ref{R0U0}, \ref{R1U0} and \ref{U0U1}, shows
that the subset of $\calF$ where $d_{U,0}$ is smallest has a
single component. Its right-hand boundary  is $d_{U,0}=d_{U,1}$
above $y=\frac{b}{2}$, is $d_{R,1}=d_{U,0}$ for $y' \leq y \leq
\frac{b}{2}$, and is $d_{R,0}=d_{U,0}$ for $y < y'$ and above the
$y$-coordinate where $d_{R,0}=d_{U,0}$ meets $x=0$.

\subsubsection{Equivalence class 17}

The argument for equivalence class 13 that $d_{U,0}$ is smallest
below the lower arc of $d_{R,1}=d_{U,0}$  and above
$d_{R,0}=d_{U,0}$ and that this region is strictly
below $y=\frac{b}{2}$ (where $d_{R,1}=d_{U,1}$)
is valid for equivalence class 17 as well. On
equivalence class 17, there are $(a,b)$ for which there is, and is
not, an upper arc of $d_{R,1}=d_{U,0}$; and see Lemma~\ref{R1U0}. The
only other place where $d_{U,0}$ could be smallest would be above
the upper arc of $d_{R,1}=d_{U,0}$, when it exists.
From Lemmas~\ref{U0U1} and \ref{R1U0andU0U1}, we have $d_{U,1}<d_{U,0}$
above the upper arc of $d_{R,1}=d_{U,0}$.
So there is no
where else that $d_{U,0}$ is smallest.

\subsection{Equivalence classes 18 - 21}

These equivalence classes are on $b=1$ and form borders for
equivalence classes 25 - 28. So from Table~\ref{Naive}, only
$d_{R,0}$, $d_{R,1}$ and $d_{U,0}$ can be smallest.

\subsubsection{Equivalence class 18}

From Table~\ref{Naive}, only $d_{R,0}$ and $d_{R,1}$ can be
smallest. So  Lemma~\ref{R0R1} determines the labeled $\calF$'s.

\subsubsection{Equivalence classes 19 - 21}

Equivalence classes 19, 20 and 21 are borders of equivalence class
17, 14 and 15, respectively. The arguments we made there involving
$d_{R,0}$, $d_{R,1}$ and $d_{U,0}$ still hold with the following
exceptions: i) $d_{R,0}=d_{R,1}$ passes through
$(\frac{a}{2},\frac{1}{2})$,  ii) on equivalence class 20, the
intersection point of $d_{R,1}=d_{U,0}$ with $x=0$ is at
$(0,\frac{1}{2})$ (see Lemma~\ref{R1U0}), and iii) on equivalence class
21, $d_{R,1}=d_{U,0}$ meets $y=\frac{1}{2}$ with $0 < x <
\frac{a}{2}$.

\section{Equivalence classes for $b > 1$}
\label{bbiggerthan1}

The distance functions that can be smallest for $b >1$ are
$d_{R,0}$, $d_{R,1}$, $d_{R,2}$, $d_{U,0}$ and $d_{U,2}$. In
Section~\ref{SecR0R1U0}, we determine where each of $d_{R,0}$,
$d_{R,1}$ and $d_{U,0}$ is smaller than the other two. In
Section~\ref{SecR2}, we show that the subset in which $d_{R,2}$ is
smaller than $d_{R,0}$, $d_{R,1}$ and $d_{U,0}$ is contained in
the subset where $d_{R,1}$ is smaller than $d_{U,0}$. We use this
to show where $d_{R,2}$ is smaller than  $d_{R,0}$, $d_{R,1}$ and
$d_{U,0}$. In Section~\ref{SecU2}, we show that the subset in
which $d_{U,2}$ is smaller than the other four is contained in the
subset where $d_{R,1}$ is smaller than $d_{R,0}$, $d_{R,2}$ and
$d_{U,0}$. Lastly we determine where $d_{U,2}$ is smaller
than $d_{R,1}$. This more holistic approach enables us to
determine the equivalence class of each $(a,b)$, for $b>1$,
without needing to break this section into a subsection for each
equivalence class.

\subsection{The distance functions $d_{R,0}$, $d_{R,1}$ and
$d_{U,0}$}\label{SecR0R1U0}

In Proposition~\ref{minU0}, we showed that to the left of
$(b^2+1)a^2+2b^3a-2b^3-b^2 = 0$, the distance function $d_{U,0}$
can not be smallest. We then use Lemma~\ref{R0R1} to see where
each of $d_{R,0}$ and $d_{R,1}$ is smaller than the other. On
equivalence class 25, only $d_{R,0}$ and $d_{R,1}$ can be smallest.
The labeled $\calF$'s for this equivalence
class are determined by Lemma~\ref{R0R1} since this equivalence class is to the lower left of
$(4b-2)a-2b-1=0$.

For the remainder of Section~\ref{SecR0R1U0}, we restrict to
$(a,b)$ to the right of $(b^2+1)a^2+2b^3a-2b^3-b^2 = 0$. See
Lemmas \ref{R0R1}, \ref{R0U0} and \ref{R1U0} for where each
of $d_{R,0}$, $d_{R,1}$ and $d_{U,0}$ is smaller than each other,
in pairs.

\begin{prop}
\label{R0R1U0inF} For all $(a,b)$ to the right of
$(b^2+1)a^2+2b^3a-2b^3-b^2 = 0$ there is exactly one triple
intersection for $d_{R,0}$, $d_{R,1}$, $d_{U,0}$; it is in the
interior of $\calF$.
\end{prop}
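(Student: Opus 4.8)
The plan is to reduce the count of triple intersections for $d_{R,0},d_{R,1},d_{U,0}$ to a comparison of two explicit plane curves whose monotonicity makes the whole statement transparent. Both curves come straight out of Table~\ref{10distances} after two elimination steps, so the detailed shape information in Lemma~\ref{R0R1}, Lemma~\ref{R0U0}, Lemma~\ref{R1U0} is not actually needed here (though it provides a useful consistency check).

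First I would eliminate. From Table~\ref{10distances}, $d_{R,0}=d_{U,0}$ is the conic $y^2-x^2=c$ with $c:=\frac{1}{4}(2b+1-a^2-2ab)=\frac{1}{4}(1-a)(2b+1+a)$, so $c>0$ for $0<a<1$ and $c=0$ for $a=1$. On the locus where this holds we have $y^2=x^2+c$, and substituting this into the equation $d_{R,0}=d_{R,1}$ kills the $y^2$-term and collapses it to the bilinear relation $2y(2x-b)=2bx-a$. Since $b>1\ge a\ge 2x$ throughout $\calF$, we have $2x-b<0$ there, so a point of $\calF$ is a triple intersection for $d_{R,0},d_{R,1},d_{U,0}$ if and only if it lies on both
\[ C_1:\ y=\frac{2bx-a}{2(2x-b)}=\frac{b}{2}+\frac{b^2-a}{4x-2b}\quad\mbox{and}\quad C_2:\ y=\sqrt{x^2+c}\,; \]
the converse direction just reverses the substitution. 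On $0\le x\le \frac{a}{2}$ the denominator $4x-2b$ is negative and bounded away from $0$, and $b^2-a>0$ because $b>1$, so $C_1$ is a smooth function with $\frac{dy}{dx}=-\frac{4(b^2-a)}{(4x-2b)^2}<0$, strictly decreasing from $(0,\frac{a}{2b})$ through $(\frac{a}{2b},0)$ into $y<0$, whereas $C_2$ is non-decreasing. Hence $C_1-C_2$ is strictly decreasing and $C_1,C_2$ meet in at most one point of $\calF$: uniqueness is free.

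For existence and interiority I would apply the Intermediate Value Theorem to $C_1-C_2$ on $[0,\frac{a}{2b}]$: at $x=0$ it equals $\frac{a}{2b}-\sqrt c$, and at $x=\frac{a}{2b}$ it equals $-\sqrt{(\frac{a}{2b})^2+c}<0$. So a root occurs in $(0,\frac{a}{2b})$ exactly when $\frac{a}{2b}>\sqrt c$, i.e.\ $c<\frac{a^2}{4b^2}$, i.e.\ $b^2(2b+1-a^2-2ab)<a^2$; rearranging, this is precisely $(b^2+1)a^2+2b^3a-2b^3-b^2>0$, the hypothesis ``to the right of the quartic.'' At such a root $(x_0,y_0)$ one has $0<x_0<\frac{a}{2b}<\frac{a}{2}$ and $0<y_0=C_1(x_0)<C_1(0)=\frac{a}{2b}<\frac12$, so $(x_0,y_0)$ is interior to $\calF$ and is a triple intersection by construction.

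Finally I would rule out triple intersections on $\partial\calF$ under the strict inequality: a point with $y=0$ violates $y^2-x^2=c>0$; a point on $x=0$ would force $\sqrt c=\frac{a}{2b}$, i.e.\ equality in the quartic; and putting $x=\frac{a}{2}$ or $y=\frac12$ into $C_1$ gives $y=\frac{a(b-1)}{2(a-b)}<0$ and $x=\frac{a-b}{2(b-1)}<0$ respectively, both outside $\calF$. Thus there is exactly one triple intersection in $\calF$ and it is interior. The only steps requiring care are the two elimination simplifications (in particular, checking that imposing $d_{R,0}=d_{U,0}$ genuinely linearizes $d_{R,0}=d_{R,1}$ in $y$) and the sign $b^2-a>0$ that drives the monotonicity of $C_1$; both are immediate on the region $b>1$, $0<a\le 1$, so I do not anticipate a real obstacle.
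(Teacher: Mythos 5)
Your proof is correct, and it takes a genuinely different route from the paper's. The paper argues qualitatively: it invokes the descriptions of the arcs $d_{R,0}=d_{R,1}$, $d_{R,0}=d_{U,0}$ and $d_{R,1}=d_{U,0}$ from Lemmas~\ref{R0R1}, \ref{R0U0} and \ref{R1U0} (endpoints, concavity), splits on which side of $a^2+(2b+2)a-4b=0$ the point $(a,b)$ lies, and compares the $y$-intercepts of $d_{R,1}=d_{U,0}$ and $d_{R,0}=d_{U,0}$ on $x=0$, observing that their order flips exactly on the quartic. You instead eliminate algebraically: the identity $d_{R,0}-d_{R,1}=2(y^2-x^2-c)+(2by+2bx-4xy-a)$ (which I verified) reduces the triple-intersection locus to $C_1\cap C_2$, and on $b>1$ the strict monotonicity of $C_1-C_2$ gives uniqueness for free while the IVT endpoint evaluation turns existence into precisely the inequality $(b^2+1)a^2+2b^3a-2b^3-b^2>0$. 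Your approach is more self-contained (it does not lean on the curve-shape lemmas), makes the role of the quartic transparent rather than a black-box resultant condition, and handles interiority in the same stroke since $0<x_0<\frac{a}{2b}$ and $0<y_0<\frac{a}{2b}<\frac12$. What it gives up is reusability: the paper's lemma-based bookkeeping of arc endpoints is needed elsewhere anyway, so its proof here is nearly free given that infrastructure. Two small remarks: the hypothesis $b>1$ (ambient in Section~\ref{bbiggerthan1}) is essential to your argument, since both $2x-b<0$ and $b^2-a>0$ fail for some $(a,b)$ with $b\leq 1$ to the right of the quartic, so it is worth stating explicitly; and your boundary check at $y=0$ cites $c>0$, which degenerates to $c=0$ at $a=1$ --- harmless, because the monotonicity argument already shows the unique zero of $C_1-C_2$ lies in $(0,\frac{a}{2b})$, making the separate boundary sweep redundant.
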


\begin{proof}
To the right of
$a^2+(2b+2)a-4b=0$ the result follows from Lemmas~\ref{R0R1} and
\ref{R1U0}.
To the left of, and on $a^2+(2b+2)a-4b=0$,  the $y$-intercept of
$d_{R,1}=d_{U,0}$ on $x=0$ is
$y=\frac{b-\sqrt{-a^2-2ab-2a+b^2+2b+1}}{2}$ and the $y$-intercept
of $d_{R,0}=d_{U,0}$ is $y=\frac{\sqrt{-a^2-2ab+2b+1}}{2}$
(note  $(b^2+1)a^2+2b^3a-2b^3-b^2 = 0$ is to the right of
$a^2+2ab-2b=0$ - see Lemma~\ref{R0U0}).
The
former is greater than the latter if and only if $(a,b)$ is to the
right of $(b^2+1)a^2+2b^3a-2b^3-b^2 = 0$.
\end{proof}

We see $d_{U,0}$ is smaller than the other two to the left of
$d_{R,1}=d_{U,0}$ and above $d_{R,0}=d_{U,0}$. In a neighborhood
of $(\frac{a}{2},\frac{1}{2})$, we see $d_{R,1}$ is smaller than
the other two.

Now we need to know how each of the curves in $\calF$, at which
two of $d_{R,0}$, $d_{R,1}$ and $d_{U,0}$ are the same, meet the
borders of $\calF$. Considering only where each of $d_{R,0}$,
$d_{R,1}$ and $d_{U,0}$ is smaller than the other two, let us
consider the possible labeled $\calF$'s  for $(a,b)$ to the right
of $(b^2+1)a^2+2b^3a-2b^3-b^2 = 0$. From
Proposition~\ref{R0R1U0inF}, the equivalence class of a labeled
$\calF$  can only change if one of the four possibilities in
Lemma 6.1  occurs.

For $d_{R,0}=d_{U,0}$, the only one of these that occurs to the
right of $(b^2+1)a^2+2b^3a-2b^3-b^2 = 0$ is that $d_{R,0}=d_{U,0}$
at $(0,0)$ and $(\frac{1}{2},\frac{1}{2})$ on $a=1$. For
$d_{R,1}=d_{U,0}$, the only one of these that occurs is that
$d_{R,1}=d_{U,0}$ at $(0,\frac{1}{2})$ on $a^2+(2b+2)a-4b = 0$.
For $d_{R,0}=d_{R,1}$, the only one of these that occurs is that
$d_{R,0}=d_{R,1}$ at $(\frac{a}{2},0)$ on $(4b-2)a-2b-1=0$.

We now show that having $d_{R,0}=d_{R,1}$ at $(\frac{a}{2},0)$
does not lead to a change
in equivalence classes for $a < 1$.
Let us consider the $(a,b)$ for which $a
<1$ and that are to the right of $2ab-2a-1=0$; note this subset of the $ab$-plank
contains $(4b-2)a-2b-1=0$ for $a < 1$. For $(a,b)$ in this
subset,  Lemma~\ref{R0R2} shows that $d_{R,2}$ is
smaller than $d_{R,0}$ in a neighborhood of
$(\frac{a}{2},0)$. So having
$d_{R,0}=d_{R,1}$ at $(\frac{a}{2},0)$ does not lead to a change in equivalence class
along $(4b-2)a-2b-1=0$ for $a < 1$.

The only  distance functions that can be smallest on equivalence
classes 26 - 29 and 32  are $d_{R,0}$, $d_{R,1}$ and $d_{U,0}$.
The labeled $\calF$'s, considering only  $d_{R,0}$, $d_{R,1}$ and $d_{U,0}$,
are determined by the previous paragraph and
Lemmas~\ref{R0R1}, \ref{R0U0} and \ref{R1U0}.

\subsection{Where $d_{R,2}$ is smaller than $d_{R,0}$, $d_{R,1}$ and
$d_{U,0}$} \label{SecR2}

From Proposition~\ref{WhereR2NotSmallest},  $d_{R,2}$ can only be
smallest above $2a^2+(-3b+1)a+2b^2-2b=0$.
For the rest of Section~\ref{SecR2}, we restrict to that subset of
the $ab$-plank.

Note $d_{U,0}$ can be smaller than $d_{R,1}$ and $d_{R,0}$
only to the right of $(b^2+1)a^2+2b^3a-2b^3-b^2=0$.

\begin{prop}
\label{R2U0R1} Let $(a,b)$ be above $2a^2+(-3b+1)a+2b^2-2b=0$ and
to the right of $(b^2+1)a^2+2b^3a-2b^3-b^2=0$.
The closures of the regions on which $d_{R,2}$ is smallest and on
which $d_{U,0}$ is smallest are disjoint.
\end{prop}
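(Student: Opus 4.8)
The plan is to show that the region where $d_{R,2}$ is smallest lies entirely on one side of the hyperbola $d_{R,1}=d_{U,0}$, while the region where $d_{U,0}$ is smallest lies entirely on the other side, so that their closures cannot even touch. First I would recall from Lemma~\ref{R1R2} (note $b > 1$ here) that above $2a^2+(-3b+1)a+2b^2-2b=0$ — which, by Proposition~\ref{WhereR2NotSmallest}, is the only place $d_{R,2}$ can be smallest — the curve $d_{R,1}=d_{R,2}$ is an arc (together with the point $(\frac{a}{2},\frac{1}{2})$) with $d_{R,1}$ smaller above it and $d_{R,2}$ smaller below. Hence the region where $d_{R,2}$ is smallest sits \emph{below} the arc $d_{R,1}=d_{R,2}$, in particular inside the region where $d_{R,2}< d_{R,1}$. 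Likewise, from Section~\ref{SecR0R1U0} and Lemma~\ref{R1U0}, the region where $d_{U,0}$ is smallest is to the \emph{left} of the arc $d_{R,1}=d_{U,0}$, inside the region where $d_{U,0}< d_{R,1}$.

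Next I would argue that these two regions are separated by the curve $d_{R,1}=d_{U,0}$ itself. On the region where $d_{R,2}$ is smallest we have $d_{R,2}< d_{R,1}$; but if a point of this region also had $d_{U,0}\le d_{R,1}$, then $d_{R,1}$ would not be smaller than $d_{U,0}$ there, so we would be on the $d_{U,0}$ side of the curve $d_{R,1}=d_{U,0}$. I would make this precise by comparing where the two relevant arcs, $d_{R,1}=d_{R,2}$ and $d_{R,1}=d_{U,0}$, meet the sides of $\calF$, using Lemmas~\ref{R1R2} and \ref{R1U0}: in the subset of the $ab$-plank under consideration both arcs have left endpoint on $x=0$ (with $0<y<\frac12$), and I would show by evaluating at a sample $(a,b)$ — and invoking continuity, since no triple intersection $d_{R,1}=d_{R,2}=d_{U,0}$ appears in the interior of $\calF$ by Theorem~\ref{spontaneous} (for $d_{R,1},d_{R,2},d_{U,0}$ there is no spontaneous generation of a triple intersection) — that on $x=0$ the arc $d_{R,1}=d_{U,0}$ meets strictly below the arc $d_{R,1}=d_{R,2}$, and similarly on the other sides of $\calF$. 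Because there is no triple intersection in the interior, the two arcs cannot cross there either, so the arc $d_{R,1}=d_{R,2}$ lies entirely above (and to the right of) the arc $d_{R,1}=d_{U,0}$ throughout $\calF$.

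Finally I would assemble these facts: the closure of the $d_{R,2}$-smallest region lies on or below $d_{R,1}=d_{R,2}$, the closure of the $d_{U,0}$-smallest region lies on or to the left of $d_{R,1}=d_{U,0}$, and since $d_{R,1}=d_{U,0}$ is strictly separated from (below/left of) $d_{R,1}=d_{R,2}$ in $\calF$, a point in the first closure satisfies $d_{R,1}\le d_{U,0}$ with equality only on $d_{R,1}=d_{U,0}$, hence cannot lie in the second closure unless it lies on $d_{R,1}=d_{U,0}$ — but that curve is below $d_{R,1}=d_{R,2}$, where $d_{R,2}$ is strictly larger than $d_{R,1}$, contradicting membership in the $d_{R,2}$-smallest closure. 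Thus the two closures are disjoint.

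I expect the main obstacle to be the bookkeeping of endpoints: verifying that on \emph{every} side of $\calF$ the arc $d_{R,1}=d_{U,0}$ genuinely lies below/left of the arc $d_{R,1}=d_{R,2}$, across the whole (somewhat intricately bounded) region of the $ab$-plank cut out by $2a^2+(-3b+1)a+2b^2-2b>0$ and $(b^2+1)a^2+2b^3a-2b^3-b^2>0$ with $b>1$. This requires carefully matching the case analysis of endpoint locations in Lemmas~\ref{R1R2} and \ref{R1U0} against the defining inequalities of equivalence classes 30, 31, 33--37, 46 and the relevant one-dimensional classes, and checking that no sub-case is overlooked; the use of Theorem~\ref{spontaneous} to preclude an interior triple intersection is what keeps this finite and makes the sample-point-plus-continuity argument rigorous.
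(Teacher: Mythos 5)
Your proposal follows essentially the same route as the paper: confine the $d_{R,2}$-smallest region to $\{d_{R,2}<d_{R,1}\}$ and the $d_{U,0}$-smallest region to $\{d_{U,0}<d_{R,1}\}$ via Lemmas~\ref{R1R2} and \ref{R1U0}, then show the separating arcs $d_{R,1}=d_{R,2}$ and $d_{R,1}=d_{U,0}$ cannot cross in $\calF$ by checking a sample $(a,b)$ and invoking continuity together with Theorem~\ref{spontaneous} for the triple $d_{R,1},d_{R,2},d_{U,0}$. The only quibble is a bookkeeping one: in the relevant part of the $ab$-plank the paper compares the arcs' intersections with $y=0$ (where a coincidence would force $a^2+b^2=0$), not $x=0$ as you assert, since by Lemma~\ref{R1R2} the left endpoint of $d_{R,1}=d_{R,2}$ sits on $y=0$ there.
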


\begin{proof}
See Lemmas~\ref{R1R2} and \ref{R1U0} for a description of where
each distance function in the pairs, $d_{R,1}$, $d_{R,2}$ and
$d_{R,1}, d_{U,0}$ is smaller than the other.
If $d_{R,1}=d_{R,2}=d_{U,0}$ on $y=0$ then $a^2+b^2=0$.
We test a sample $(a,b)$ in this region and find that $d_{R,1}=d_{U,0}$ meets
$y=0$ to the left of $d_{R,1}=d_{R,2}$ so this is true for all $(a,b)$.

As long as
$d_{R,1}=d_{U,0}$ and $d_{R,1}=d_{R,2}$ do not cross twice in the
interior of $\calF$, then the result follows. We choose a sample
$(a,b)$ in these equivalence classes and find that
$d_{R,1}=d_{U,0}$ and $d_{R,1}=d_{R,2}$ do not cross twice in
$\calF$ for that $(a,b)$. If there is an $(a,b)$ in these
equivalence classes for which they cross twice, then there would
be a spontaneous generation of a triple intersection for
$d_{R,1}$, $d_{R,2}$, $d_{U,0}$; but this can not occur from
Theorem 8.1 .
\end{proof}

We again consider the entire subset of the $ab$-plank on which
$d_{R,2}$ can be smallest.
From
Proposition~\ref{R2U0R1}, it suffices to determine where $d_{R,2}$
is smaller than $d_{R,0}$ and $d_{R,1}$. Next we describe triple
intersections for $d_{R,0}$, $d_{R,1}$, $d_{R,2}$.

\begin{prop}
\label{R0R1R2} There is no triple intersection for $d_{R,0}$,
$d_{R,1}$, $d_{R,2}$ on $x=0$ or $y=\frac{1}{2}$. There is a
triple intersection  on $y=0$ for $(a,b)$ with
$(2b-2)a^3+(b^2-1)a^2-b^2 = 0$
and on $x=\frac{a}{2}$  for $(a,b)$ with $2a^2+(-3b+1)a+2b^2-2b=0$
(which is the border of where $d_{R,2}$ can be smallest). There is
a triple intersection in the interior of $\calF$ if and only if
$(a,b)$ is between those two curves.
\end{prop}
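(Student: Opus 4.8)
The plan is to exploit the fact that all three relevant curves $d_{R,0}=d_{R,1}$, $d_{R,0}=d_{R,2}$ and $d_{R,1}=d_{R,2}$ are (possibly degenerate) conics in the $xy$-plane whose coefficients are explicit in $a,b$, so a triple intersection on a given side of $\calF$ reduces to a simultaneous system that I can solve in closed form. First I would substitute each of the four boundary conditions $x=0$, $y=\tfrac12$, $y=0$, $x=\tfrac a2$ into the three pairwise equations (taken from Table~\ref{10distances}) and ask when two of the resulting equations in a single variable have a common root. For $x=0$: the equations $d_{R,0}=d_{R,1}$, $d_{R,0}=d_{R,2}$, $d_{R,1}=d_{R,2}$ become polynomials in $y$; a short resultant/elimination computation should show the only common solution forces a coefficient identity in $a,b$ that cannot hold for $0<a\le1$, $0<b$ (the kind of estimate already used repeatedly, e.g.\ in the $d_{D,1}=d_{R,0}$ analysis), giving "no triple intersection on $x=0$." The same template handles $y=\tfrac12$.

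Next, for $y=0$, I would carry out the elimination and extract the single condition under which $d_{R,0}$, $d_{R,1}$, $d_{R,2}$ concur; the claim is that this condition is exactly $(2b-2)a^3+(b^2-1)a^2-b^2=0$. Concretely, setting $y=0$ in the three pairwise equations, two of them (say $d_{R,0}=d_{R,1}$ and $d_{R,1}=d_{R,2}$) are quadratics in $x$; taking their resultant in $x$ and simplifying should produce a polynomial in $a,b$ that factors with $(2b-2)a^3+(b^2-1)a^2-b^2$ as the only factor vanishing somewhere in the plank (the extraneous factors being things like powers of $a$, $b$, or $a+b$ that never vanish there, exactly as in the proof of Theorem~\ref{spontaneous}). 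Similarly, setting $x=\tfrac a2$ and eliminating should yield the condition $2a^2+(-3b+1)a+2b^2-2b=0$; this is consistent with the note after Proposition~\ref{WhereR2NotSmallest} that this ellipse is where $d_{R,0},d_{R,1},d_{R,2}$ have a triple intersection on $x=\tfrac a2$, so that part can be cited directly.

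Finally, for the "if and only if" on the interior: the triple intersection point of the three conics $d_{R,0}=d_{R,1}$, $d_{R,0}=d_{R,2}$, $d_{R,1}=d_{R,2}$ is a single point $(x_0,y_0)$ that can be written as an explicit rational (or mild radical) function of $a,b$ — solve the two \emph{linear-in-}$x$-or-$y$ relations that arise after subtracting, or use that $d_{R,0}=d_{R,2}$ is the simple curve $(2a+2b)x-a^2-ab-a+b=0$ from Lemma~\ref{DisDifIsoClaCha}, which pins down $x_0$ immediately, and then $d_{R,0}=d_{R,1}$ pins down $y_0$. Then I would show: $y_0\ge 0$ with equality exactly on $(2b-2)a^3+(b^2-1)a^2-b^2=0$, and $x_0\le \tfrac a2$ with equality exactly on $2a^2+(-3b+1)a+2b^2-2b=0$, while the remaining two inequalities $x_0\ge 0$ and $y_0\le\tfrac12$ hold automatically on the region between the two curves; each of these is an explicit polynomial inequality in $a,b$ provable by the same elementary bounding arguments (using $0<a\le 1$, $b>1$) that pervade the paper. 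Combining these locates $(x_0,y_0)$ in the \emph{interior} of $\calF$ precisely when $(a,b)$ lies strictly between the cubic and the ellipse. The main obstacle I anticipate is the bookkeeping in the interior step: producing a clean closed form for $(x_0,y_0)$ and then verifying the four sign conditions without the algebra ballooning — the cubic $(2b-2)a^3+(b^2-1)a^2-b^2$ is one of the two "bad" quartics/cubics flagged in Section~\ref{TheEC}, so some care (or a Magma-assisted check of the type the authors already rely on) may be needed to confirm the factorizations are as expected.
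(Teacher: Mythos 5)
Your handling of the boundary cases is fine and matches the paper: for each of the four sides you substitute into the pairwise differences and eliminate, finding no solution on $x=0$ or $y=\frac{1}{2}$ and recovering the cubic on $y=0$ and the ellipse on $x=\frac{a}{2}$; the paper dismisses this part as ``straightforward computations.'' The problem is your interior argument. You assert that $d_{R,0}=d_{R,2}$ is the line $(2a+2b)x-a^2-ab-a+b=0$; that line is $d_{R,2}=d_{U,0}$ (see the proof of Lemma~\ref{DisDifIsoClaCha} -- the only linear loci among the fifteen pairs are $d_{R,0}=d_{U,2}$ and $d_{R,2}=d_{U,0}$). Computing from Table~\ref{10distances}, $d_{R,0}-d_{R,2}=4y^2-4x^2+(4a+4b)x-(a+1)^2$, a genuine hyperbola, and the quadratic parts of $d_{R,0}-d_{R,1}$, $d_{R,0}-d_{R,2}$, $d_{R,1}-d_{R,2}$ (namely $-2x^2-4xy+2y^2$, $-4x^2+4y^2$, $-2x^2+4xy+2y^2$) admit no nonzero linear combination that vanishes, so no linear relation falls out of subtracting either. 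Consequently there is no ``single point $(x_0,y_0)$'' given by a rational formula: the triple-intersection locus is the intersection of two conics, with up to four points, and your plan of verifying four sign conditions on an explicit $(x_0,y_0)$ does not get off the ground as written. You would also need, for the ``only if'' direction, an argument that the relevant intersection point varies continuously and crosses $\partial\calF$ only through $y=0$ or $x=\frac{a}{2}$, which your sketch takes for granted.

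The paper avoids all of this by arguing on the boundary of $\calF$ rather than solving for the intersection: using the descriptions of the arcs $d_{R,0}=d_{R,1}$ and $d_{R,0}=d_{R,2}$ from Lemmas~\ref{R0R1} and \ref{R0R2} (each is a single monotone arc in $\calF$ with known endpoints and concavity), it tracks the \emph{order} in which the two arcs' endpoints occur on $x=\frac{a}{2}$ and on $y=0$ as $(a,b)$ moves across $2a^2+(-3b+1)a+2b^2-2b=0$, $2ab-2a-1=0$, $(4b-2)a-2b-1=0$, and $(2b-2)a^3+(b^2-1)a^2-b^2=0$. When the endpoints interleave the arcs must cross in the interior; when they do not (and, to the right of the cubic, the continuations of the arcs meet only below $y=0$), they cannot. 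If you want to salvage your computational route, you would have to parametrize one arc, substitute into the other difference, and control all real roots of the resulting quartic in terms of $(a,b)$ -- substantially more bookkeeping than the endpoint-ordering argument.
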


\begin{proof}
Straightforward computations show for which $(a,b)$ the triple
intersection is on $y=0$ and on $x=\frac{a}{2}$ and that none
occur on $x=0$ and on $y=\frac{1}{2}$. To show the result about
the triple intersection in the interior, we combine the
information from Lemmas~\ref{R0R1} and \ref{R0R2} with the
following facts. Above $2a^2+(-3b+1)a+2b^2-2b=0$ and on or below
$2ab-2a-1=0$, the right endpoint of $d_{R,0}=d_{R,1}$ meets
$x=\frac{a}{2}$ above where the right endpoint of
$d_{R,0}=d_{R,2}$ does. No extra information is needed for $(a,b)$
above $2ab-2a-1=0$ and to the left of or on $(4b-2)a-2b-1=0$. To
the right of $(4b-2)a-2b-1=0$ and to the left of
$(2b-2)a^3+(b^2-1)a^2-b^2 = 0$, $d_{R,0}=d_{R,1}$ meets $y=0$ to
the right of $d_{R,0}=d_{R,2}$. To the right of
$(2b-2)a^3+(b^2-1)a^2-b^2 = 0$, $d_{R,0}=d_{R,1}$ meets $y=0$ to
the left of $d_{R,0}=d_{R,2}$ and their continuations meet below
$y=0$; so the two arcs can not meet in the interior of $\calF$.
\end{proof}

Considering only where each of $d_{R,0}$, $d_{R,1}$, $d_{R,2}$ and
$d_{U,0}$ is smaller than the other three, let us consider the
possible equivalence classes for labeled $\calF$'s. We have the
information regarding where each of  $d_{R,0}$, $d_{R,1}$, and
$d_{U,0}$ is smaller than the other two from
Section~\ref{SecR0R1U0}. From Proposition~\ref{R2U0R1} the regions
where $d_{R,2}$ and $d_{U,0}$ can be smallest are disjoint. From
Proposition~\ref{R0R1R2}, a change in equivalence class from a
triple intersection for $d_{R,0}$, $d_{R,1}$, $d_{R,2}$ occurs
only on $2a^2+(-3b+1)a+2b^2-2b=0$ where the triple intersection
meets $x=\frac{a}{2}$ with $0 \leq y < \frac{1}{2}$ and on
$(2b-2)a^3+(b^2-1)a^2-b^2 = 0$ where the triple intersection meets
$y=0$ with $0 < x \leq \frac{a}{2}$.
The only other changes in equivalence class occur from one of the
four possibilities described in Lemma 6.1  for
the curve $d_\alpha=d_{R,2}$ with $d_\alpha \in \{ d_{R,0},
d_{R,1}\}$.

For $d_{R,0}=d_{R,2}$, the only
one of the four possibilities from Lemma 6.1  that occurs is that $d_{R,0}=d_{R,2}$ passes through
$(\frac{a}{2},0)$ on $2ab-2a-1=0$.
For $d_{R,1}=d_{R,2}$, there are two that occur. The right endpoint is
$(\frac{a}{2},\frac{1}{2})$ on $a-b+1=0$. Having $d_{R,1}=d_{R,2}$ at $(0,0)$ (which occurs along
$3a^2+(2b+2)a-2b+1=0$) does not lead to a change in equivalence class;
Lemma~\ref{R0R2} shows that $d_{R,0}$ is smaller than $d_{R,2}$ in a neighborhood of
$(0,0)$ for all $b > 1$.

For $b > 1$, we combine the earlier results about where each of $d_{R,0}$, $d_{R,1}$ and $d_{U,0}$ is smaller than
the other two with the results of this section and those of
 Lemmas~\ref{R0R2} and \ref{R1R2}. Then, considering only $d_{R,0}$, $d_{R,1}$, $d_{R,2}$ and $d_{U,0}$,
 we have determined the labeled $\calF$'s for all $b > 1$.

\subsection{Where $d_{U,2}$ is smaller than the others} \label{SecU2}

The remaining equivalence classes for which the labeled $\calF$'s
need to be determined are those in which $d_{U,2}$ can be
smallest.

\begin{prop}
\label{U2smallest} The closure of where $d_{U,2}$ is smallest
intersects the closures of where $d_{R,0}$, $d_{R,2}$ and
$d_{U,0}$ are smallest only at $(\frac{a}{2},\frac{1}{2})$ or
nowhere.
\end{prop}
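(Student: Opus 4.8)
The plan is to reduce this to the pairwise comparisons that were set up in the appendix Lemmas and to the earlier "elimination" propositions. First I would recall the relevant pairwise results: Lemma~\ref{R1U2} describes exactly where $d_{R,1}=d_{U,2}$ in $\calF$, and in particular shows that $d_{R,1}<d_{U,2}$ in the interior of $\calF$ on or below $a-2b+2=0$, so $d_{U,2}$ can only be smallest above $a-2b+2=0$ (this is the content of the Proposition right after Lemma~\ref{R1U0andU0U1}); I would restrict to that region for the rest of the argument. In that region, by the structure theorem (Lemma~\ref{DisDifIsoClaCha}) together with Lemma~\ref{R1U2}, the region where $d_{U,2}<d_{R,1}$ in $\calF$ is the region \emph{above} the single arc of $d_{R,1}=d_{U,2}$, which runs from a point on $x=0$ up to either a point on $y=\frac{1}{2}$ (below $a-b+1=0$) or to the corner $(\frac{a}{2},\frac{1}{2})$ (on or above $a-b+1=0$). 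So the closure of where $d_{U,2}$ is smallest is contained in the closure of $\{\,y > (\text{that arc})\,\}$, and I only need to see that this region meets the closures of the $d_{R,0}$-, $d_{R,2}$- and $d_{U,0}$-smallest regions only at $(\frac{a}{2},\frac{1}{2})$ (or not at all).

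Next I would handle the three comparisons one at a time. For $d_{R,0}$ versus $d_{R,1}$: by Lemma~\ref{R0R1}, for $b>1$ the curve $d_{R,0}=d_{R,1}$ is a single arc with $d_{R,1}$ smaller \emph{above} it and $d_{R,0}$ smaller below, with left endpoint on $x=0$. Thus the region where $d_{R,0}$ is smallest lies below $d_{R,0}=d_{R,1}$, while the region where $d_{U,2}$ is smallest lies above $d_{R,1}=d_{U,2}$; I would compare where these two arcs meet $x=0$ (and, near $(\frac{a}{2},\frac{1}{2})$, use that both pass through that corner only when the relevant boundary curves hold) and check, by a sample point plus a continuity/genus-type argument as in the earlier propositions, that the $d_{R,1}=d_{U,2}$ arc meets $x=0$ above where $d_{R,0}=d_{R,1}$ does, so the two open regions are disjoint and their closures meet at most at the top right corner. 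For $d_{R,2}$ versus $d_{R,1}$: by Lemma~\ref{R1R2}, $d_{R,1}$ is smaller \emph{above} the arc $d_{R,1}=d_{R,2}$ and $d_{R,2}$ below it; since $d_{U,2}$ smallest forces being above $d_{R,1}=d_{U,2}$ (where $d_{R,1}<d_{U,2}$), and $d_{R,2}$ smallest forces being below $d_{R,1}=d_{R,2}$ (where $d_{R,1}<d_{R,2}$), I would again compare the two arcs — on a common side of $\calF$ such as $x=0$ or $y=\frac{1}{2}$ — to show they cannot interleave, invoking that neither degenerates (no triple intersection spontaneously generated, Theorem~\ref{spontaneous}, for $d_{R,1},d_{R,2},d_{U,2}$) except at $(\frac{a}{2},\frac{1}{2})$. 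For $d_{U,0}$ versus $d_{R,1}$: by Lemma~\ref{R1U0}, for the $(a,b)$ of concern with $b>1$, $d_{U,0}$ is smaller to the \emph{left} of the relevant arc of $d_{R,1}=d_{U,0}$; and one checks (as in Proposition~\ref{minU0} and Lemma~\ref{R1U0andU0U1}) that this left region and the region above $d_{R,1}=d_{U,2}$ are separated, again with only the corner possibly common.

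To glue these together I would note that $d_{U,2}$ smallest at a point $P$ implies in particular $d_{U,2}(P)<d_{R,1}(P)$ (strictly, since "smallest" means strictly smaller than all five others), so $P$ lies in the open region above the $d_{R,1}=d_{U,2}$ arc; meanwhile $d_{R,0}$ (resp.\ $d_{R,2}$, resp.\ $d_{U,0}$) smallest at a point $P'$ implies $P'$ lies in the open region where $d_{R,1}$-versus-that-function goes the other way, hence on the opposite side of an arc that shares only the corner $(\frac{a}{2},\frac{1}{2})$ with $d_{R,1}=d_{U,2}$. Passing to closures then gives that the two closed regions can meet only at $(\frac{a}{2},\frac{1}{2})$, and in fact meet there only when the defining boundary curves ($a-b+1=0$, $2ab-2a-1=0$, $a-2b+2=0$, etc.) hold, so for generic $(a,b)$ the intersection is empty. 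I expect the main obstacle to be the $d_{U,0}$ comparison: unlike $d_{R,1}=d_{R,2}$ and $d_{R,1}=d_{U,2}$, the curve $d_{R,1}=d_{U,0}$ is a genuine (possibly two-branched) hyperbola whose behavior in $\calF$ changes across $(\sqrt{2}+1)a-b-1=0$ and across $(b^2+1)a^2+2b^3a-2b^3-b^2=0$, so I would have to split into the cases of Lemma~\ref{R1U0} and, in the awkward two-branch case, rule out a second crossing of $d_{R,1}=d_{U,0}$ with $d_{R,1}=d_{U,2}$ by the same spontaneous-generation/Theorem~\ref{spontaneous} bookkeeping used in Proposition~\ref{R2U0R1}, checking a sample $(a,b)$ to fix the sign.
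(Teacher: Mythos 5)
Your overall reduction is sound and, for the $d_{U,0}$ comparison, essentially matches the paper: both you and the authors compare $d_{R,1}=d_{U,0}$ with $d_{R,1}=d_{U,2}$ and reduce to where each meets $x=0$. For $d_{R,0}$ and $d_{R,2}$, however, you route everything through $d_{R,1}$, whereas the paper mediates through $d_{U,2}$ itself: it uses that $d_{R,0}=d_{U,2}$ is a \emph{horizontal line segment} $y=\frac{a+b+1-ab}{2b+2}$ and that $d_{R,2}=d_{U,2}$ is a \emph{slope-one line segment} ending at $(\frac{a}{2},\frac{1}{2})$, and then shows (by a slope/concavity analysis of $d_{R,0}=d_{R,1}$, reducing to its left endpoint on $x=0$, resp.\ by a sample point plus Theorem~\ref{spontaneous} for the triple $d_{R,1},d_{R,2},d_{U,2}$) that the relevant arc lies entirely on one side of that segment. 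Choosing the straight-line loci as separators is what makes the verifications one-dimensional; your choice forces you to compare two genuinely curved arcs in the $d_{R,0}$ case.

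The concrete gap is your use of Theorem~\ref{spontaneous}. That theorem rules out spontaneous generation only for the triples $d_{R,1},d_{R,2},d_{U,2}$ and $d_{R,1},d_{R,2},d_{U,0}$. You invoke it (via the ``same bookkeeping as Proposition~\ref{R2U0R1}'') to forbid a second crossing of $d_{R,1}=d_{U,0}$ with $d_{R,1}=d_{U,2}$, i.e.\ for the triple $d_{R,1},d_{U,0},d_{U,2}$, which it does not cover; and your $d_{R,0}$ argument tacitly needs the analogous statement for $d_{R,0},d_{R,1},d_{U,2}$, also not covered. As written, the ``arcs do not interleave'' steps for those two comparisons are unsupported. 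The gap is fillable: either carry out the resultant computation of Theorem~\ref{spontaneous} for the two new triples, or do what the paper does --- observe that in the relevant part of the $ab$-plank the arc $d_{R,1}=d_{U,0}$ has strictly negative slopes while $d_{R,1}=d_{U,2}$ has non-negative slopes (so a single comparison on $x=0$ suffices), and replace the $d_{R,0}$ comparison by the horizontal segment $d_{R,0}=d_{U,2}$. One smaller point: your closing remark that the closures meet at $(\frac{a}{2},\frac{1}{2})$ ``only when the defining boundary curves hold, so for generic $(a,b)$ the intersection is empty'' is not right --- for all $(a,b)$ on or above $a-b+1=0$ the right endpoint of $d_{R,1}=d_{U,2}$ is the corner (Lemma~\ref{R1U2}), so the corner lies in both closures on an open set of $(a,b)$; the proposition's ``or nowhere'' alternative is the case below $a-b+1=0$, not the generic case.
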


\begin{proof}
We first show this for $d_{R,0}$. We have $d_{R,0}=d_{U,2}$ is the
horizontal line segment $y=\frac{a+b+1-ab}{2b+2}$.
Since $b > 1$ we have $\frac{a+b+1-ab}{2b+2} = \frac{1}{2} - \frac{a(b-1)}{2b+2} < \frac{1}{2}$.
Also $0=\frac{a+b+1-ab}{2b+2}$ does not pass through the $ab$-plank.
So we
have $0 < \frac{a+b+1-ab}{2b+2}< \frac{1}{2}$. Below the line
segment, $d_{R,0}$ is smaller and above it, $d_{U,2}$ is. See
Lemma~\ref{R0R1} for a description of where each of $d_{R,0}$ and
$d_{R,1}$ is smaller than the other. It suffices to show that $
d_{R,0}=d_{R,1}$ is below $d_{R,0}=d_{U,2}$.

The slope of $d_{R,0}=d_{R,1}$ is given by $\frac{dy}{dx}=\frac{2x+2y-b}{-2x+2y+b}$.
Since $b>1$ and $y\leq \frac{1}{2}$, the slope at the left endpoint of $d_{R,0}=d_{R,1}$ (where $x=0$)
is negative.
On $a+b-\sqrt{2}-1=0$, the hyperbola $d_{R,0}=d_{R,1}$ is degenerate
and $d_{R,0}=d_{R,1}$ passes through $\calF$ as a line segment with
negative slopes. Above
$a+b-\sqrt{2}-1=0$ the two components of the hyperbola are to the
left and right of the point of intersection of the asymptotes.
The component to the left passes through $\calF$. The point on the
component with infinite slope is on $y=x-\frac{b}{2}$, which does
not pass through $\calF$ since $x \leq \frac{1}{2}$ and $b > 1$. Testing any sample $(a,b)$ we
see that the arc of $d_{R,0}=d_{R,1}$ in $\calF$
is concave down or straight and all slopes on
the arc are negative; for $(a,b)$ below $a+b-\sqrt{2}-1=0$,
$d_{R,0}=d_{R,1}$ is concave up. When the $y$-coordinate of the
right endpoint is greater than 0, the $y$-coordinates of the left
and the right endpoints of $d_{R,0}=d_{R,1}$ are the same on
$5a^2-8a+12b^2-8b-4=0$. But this ellipse is strictly below the
equivalence classes where $d_{U,2}$ can be smallest. So in these
equivalence classes, the left endpoint is always above the right
endpoint. In all cases, it suffices to show that the intersection
of $d_{R,0}=d_{R,1}$ with $x=0$ is below that of
$d_{R,0}=d_{U,2}$. If $d_{R,0}=d_{R,1}=d_{U,2}$ on $x=0$ then $(b^2+1)a=0$.
Testing any sample $(a,b)$ we find the result follows.

We now show the result for $d_{R,2}$. We note $d_{R,2}=d_{U,2}$ is
a line segment, of slope 1, with right endpoint
$(\frac{a}{2},\frac{1}{2})$; below the segment,  $d_{R,2}$ is
smaller and above it, $d_{U,2}$ is. Also, $d_{R,1}=d_{U,2}$ is a
single arc, with non-negative slopes, with left endpoint on $x=0$ for $0 < y < \frac{1}{2}$
and right endpoint on $y=\frac{1}{2}$ with $0 < x \leq
\frac{a}{2}$. Above the arc, $d_{U,2}$ is smaller and below it,
$d_{R,1}$ is. Above $a^2-2b=0$ (which includes the $(a,b)$ of
concern), $d_{R,1}=d_{U,2}$ meets $x=0$ above where
$d_{R,2}=d_{U,2}$ does. We test a sample $(a,b)$ above $a-2b+2=0$
(where $d_{U,2}$ can be smallest) and find that $d_{R,2}=d_{U,2}$
is below $d_{R,1}=d_{U,2}$. For this to change, there would need
to be a spontaneous generation of a triple intersection, which
does not occur from Theorem 8.1 .

Lastly we show the result for $d_{U,0}$. We need only consider the
equivalence classes in which $d_{R,1}$, $d_{U,0}$ and $d_{U,2}$
can be smallest (they are to the right of
$(b^2+1)a^2+2b^3a-2b^3-b^2=0$ and above $a-2b+2=0$). See
Lemma~\ref{R1U0} and the previous paragraph for descriptions of
where each of $d_{R,1}$ and $d_{U,0}$ and each of $d_{R,1}$ and
$d_{U,2}$ is smaller than the other, respectively. So it suffices
to show that the intersection of $d_{R,1}=d_{U,0}$ with $x=0$ is
below that of $d_{R,1}=d_{U,2}$. If we have a triple intersection
for $d_{R,1}$, $d_{U,0}$, $d_{U,2}$ on $x=0$ then $(a,b)$ is on
$(b^2+1)a^2+2a-b^2-2b=0$, which does not pass through these
equivalence classes. We pick one sample $(a,b)$ in these
equivalence classes and see that indeed, the intersection of
$d_{R,1}=d_{U,0}$ with $x=0$ is below that of $d_{R,1}=d_{U,2}$.
\end{proof}

From Proposition~\ref{U2smallest}, the only new changes in
equivalence class, (not coming from where each of $d_{R,0}$,
$d_{R,1}$, $d_{R,2}$ and $d_{U,0}$ are smaller than the other
three) come from how $d_{R,1}=d_{U,2}$ meets the boundary of
$\calF$. The only change of the four possibilities described in
Lemma 6.1  that occurs is that on and above
$a-b+1=0$, the right endpoint of $d_{R,1}=d_{U,2}$ is
$(\frac{a}{2},\frac{1}{2})$.

For $b > 1$, we combine the earlier results about where each of $d_{R,0}$, $d_{R,1}$, $d_{R,2}$ and $d_{U,0}$ is smaller than
the other three with the results of this section and those of
Lemma~\ref{R1U2}. Together these prove
that the labeled $\calF$'s are as in Figure~\ref{TheFs2} .

\section{Conclusion of Appendix}

In Section~\ref{blessthan1}, we showed for each $(a,b)$ in one of
the equivalence classes 1 - 21, that the labeled $\calF$ in
Figure~\ref{TheFs1}
 is correct. In Section~\ref{bbiggerthan1},
we showed for each $(a,b)$ in one of the equivalence classes 22 -
47, that the labeled $\calF$ in Figure~\ref{TheFs2}  is
correct. Since no two of the 47 labeled $\calF$'s are equivalent,
this finally proves that each of the equivalence classes listed in
Table~\ref{47ec}  actually is an equivalence class.

\end{document}